\documentclass[a4paper,10pt]{article}

\usepackage{geometry}
\geometry{
  a4paper, 
}
\usepackage{epsfig}
\usepackage{amsfonts}
\usepackage{amsmath}
\usepackage{amssymb}
\usepackage{amsthm}
\usepackage{mathrsfs}
\usepackage{color}
\usepackage{amscd}
\usepackage{euscript}
\usepackage{tikz-cd}
\usepackage{stmaryrd}
\usepackage{mathtools}
\usepackage[utf8]{inputenc}
\usepackage{fancyhdr}
\usepackage[english]{babel}
\DeclareMathOperator{\spn}{span}
\usepackage{graphicx}
\graphicspath{ {./scrivania/} }
\usepackage{afterpage}
\usepackage{authblk}
\usepackage{enumitem}

\theoremstyle{plain} 
\newtheorem{thm}{Theorem}[section] 
\newtheorem{cor}[thm]{Corollary} 
\newtheorem{lem}[thm]{Lemma} 
\newtheorem{prop}[thm]{Proposition} 
\theoremstyle{definition} 
\newtheorem{defn}{Definition}[section] 
\newtheorem{oss}{Remark}

\newtheorem{ex}{Example}

\newtheorem*{imposs}{Important remark}

\DeclareFontEncoding{LS1}{}{}
\DeclareFontSubstitution{LS1}{stix}{m}{n}
\DeclareSymbolFont{symbols2}{LS1}{stixfrak} {m} {n}
\DeclareMathSymbol{\operp}{\mathbin}{symbols2}{"A8}
\usepackage[backend=bibtex,style=numeric]{biblatex}
\addbibresource{Bibliografiatesi}
 
\usepackage{graphicx} 

\title{Polyharmonicity, Almansi-type decompositions and Fueter-Sce theorem for several Clifford variables}

\author{Giulio Binosi\footnote{ORCID: \texttt{0000-0002-4733-6180}} \\
\small Dipartimento di Matematica ed Informatica "U. Dini", Universit\`a di Firenze\\ 
\small Viale Morgagni 67/A, I-50134 Firenze, Italy\\
\small binosi@altamatematica.it
}
\date{{\small
\textit{
2020 MSC: Primary 30G35; Secondary 30E20; 32A30.\\
Key words. Slice-regular functions, Almansi decomposition, Clifford algebras, Fueter Theorem.}}}

\begin{document}

\maketitle

\begin{abstract}
    We study some harmonic properties of slice regular functions in one and several Clifford variables and give explicit formulas of the iterated Laplacian applied to slice regular functions and to their spherical derivative, which are new also in the one variable context. We propose several Almansi-type decompositions for slice functions in several Clifford variables. As a consequence, we establish a several variables version of Fueter-Sce theorem.
\end{abstract}

\section{Introduction}
Complex analysis is one of the most fundamental and successful branches of mathematics. The theory of holomorphic functions, which are solutions of the Wirtinger operator $\partial/\partial\overline{z} = \frac{1}{2}(\partial_\alpha + i\partial_\beta)$, is well-developed and remarkably rich. Following the discovery of hypercomplex algebras, such as the quaternions $\mathbb{H}$ and Clifford algebras $\mathbb{R}_m$, considerable efforts have been made to extend complex analysis to these settings.

The first significant breakthrough in this direction was achieved by R. Fueter in 1934 \cite{Fueter}, who introduced a notion of regularity in the non-commutative algebra of quaternions. Fueter's approach was based on generalizing the Wirtinger operator to $\partial/\partial\overline{q} = \frac{1}{2}(\partial_\alpha + i\partial_\beta + j\partial_\gamma + k\partial_\delta$) and defining as \emph{regular} (now known as Fueter-regular) those functions that lie in its kernel. This concept was subsequently extended to Clifford algebras $\mathbb{R}_m$ using the Dirac operator $\overline{\partial} = \frac{1}{2}(\partial_{x_0} + \sum_{i=1}^m e_i \partial_{x_i})$. The corresponding class of regular functions, known as monogenic functions, forms the foundation of Clifford analysis \cite{CliffordAnalysis}.

While Fueter-regular and monogenic functions successfully replicate several key aspects of holomorphic function theory and provide a refinement of harmonic analysis in Euclidean spaces, they suffer from significant algebraic limitations. Notably, basic functions such as the identity function, polynomials, and power series of the form $\sum_m x^n a_n$ (with $a_n \in \mathbb{R}_m$) fail to be Fueter-regular or monogenic. Moreover, these classes of functions are not closed under multiplication or composition, which makes it even more challenging to construct examples of monogenic functions.

To address these shortcomings, Gentili and Struppa introduced a novel approach in 2006, known as slice analysis \cite{GentiliStruppa}. Drawing on an idea originally proposed by Cullen \cite{Cullen}, they leveraged the complex-slice structure of $\mathbb{H}$ to define a new class of functions (called slice regular functions) by requiring their restrictions to each slice to be holomorphic. This framework has since gained substantial interest and undergone rapid development. Slice regularity was soon extended to Clifford algebras \cite{Cliffordsetting} and, more generally, to real alternative $^\ast$-algebras \cite{SRFonAA}. The latter work, by Ghiloni and Perotti, introduced the broader concept of slice functions, which need not satisfy any regularity condition. These functions are constructed from complex-intrinsic functions, called stem functions. The stem functions approach allowed the two authors to construct a several variable version of the theory in any alternative real $^\ast$-algebra \cite{Several}. The theory of one-variable slice regular functions is now well established, as summarized in the monograph \cite{LibroCaterina}.

Despite their apparent incompatibility (for example only constant functions belong to both classes) slice regular and monogenic functions are closely connected through Fueter’s theorem. Originally devised to construct monogenic functions, Fueter’s theorem asserts that applying the Laplacian $\Delta_4$ to a slice regular quaternionic function $f$ yields a Fueter-regular function. This result was later generalized by Sce \cite{sce} to Clifford algebras $\mathbb{R}_m$ with an even number of imaginary units, using $\Delta_{m+1}^{(m-1)/2}$ (where $\gamma_m = (m-1)/2$ is the so-called Sce exponent \cite{MicheleSceworks}), and was finally extended to odd-dimensional cases by Qian \cite{qian}. Consequently, the Fueter-Sce theorem serves as a bridge between slice and monogenic function theories, with the Fueter-Sce mapping $\Delta_{m+1}^{\gamma_m}$ as its core structure.
Another link between these theories arises from the relationship between the spherical derivative and the Dirac operator applied to slice regular functions: they coincide up to the multiplicative factor $\gamma_m$. This leads to an alternative formulation of the Fueter-Sce theorem, stating that the spherical derivative $f'_s$ of an $\mathbb{R}_m$-valued slice regular function $f$ is $\gamma_m$-polyharmonic, namely $\Delta_{m+1}^{\gamma_m}f'_s=0$. More generally, slice regular functions themselves satisfy $\overline{\partial} \Delta_{m+1}^{\gamma_m} f = 0$,hence, they are in particular $(\gamma+1)$-polyharmonic.

In 1899, Emilio Almansi's study of polyharmonic functions lead to the famous Almansi theorem \cite{AlmansiClassico}, which expresses any polyharmonic function $f$ of degree $p$ as a sum of harmonic functions weighted by powers of $|x|^2$. Similar decompositions have been explored in various contexts, including Dunkl analysis \cite{AlmansiDunkl}, discrete umbral calculus \cite{discreteAlmansi}, and hypercomplex analysis \cite{Harmonicity, Bisiharmonicity, AlmansiPolymonogenic}.

The goal of this paper is to further investigate the harmonic properties of slice regular functions, particularly in the setting of several Clifford variables. We derive explicit formulas for the iterative application of the Laplacian to both spherical derivatives and slice regular functions, revealing their polyharmonic nature when $m$ is odd. Additionally, we establish Almansi-type decompositions for slice regular functions in several Clifford variables. Finally, we extend the Fueter-Sce theorem to the case of several variables.

The paper is structured as follows. In Section 2, we provide the necessary preliminaries on Clifford algebras, monogenic functions, and slice regular functions in both one and several variables. Section 3 focuses on the harmonic properties of slice regular functions, deriving formulas for iterated Laplacians acting on spherical derivatives (Proposition \ref{Prop potenza laplaciano derivata sferica}) and on slice regular functions themselves (Theorem \ref{theorem laplacian any order slice regular functions}). These results establish their degrees of harmonicity as $\gamma_m$ and $\gamma_m+1$, respectively. We then extend these results to several variables (Proposition \ref{Prop:iteratedLaplaciansevvar} and Theorem \ref{thm:iteratedlaplaciansevvar}).

In Section 4, we present Almansi-type decompositions for slice functions in several Clifford variables (Theorem \ref{Teorema principale}). The complexity of the higher-dimensional setting results in $2^n$ different decompositions, each determined by a choice of variables. The components are given explicitely through partial spherical derivatives. In particular, they are circular with respect to the chosen variables that determine the decomposition; if, moreover, the decomposing function is slice regular, they are $\gamma_m$-polyharmonic in the same variables, too. 
We also prove the unique character of these decompositions and a new one-variable characterization of slice regularity, through the components of ordered Almansi-type decompositions (Proposition \ref{prop:3claims}). Combining these results with Almansi’s theorem, we obtain a decomposition into harmonic, spherical components (Corollary \ref{cor further decomposition several variables}).

Finally, in Section 5, we formulate and prove the Fueter-Sce theorem in several Clifford variables (Theorem \ref{thm fueter sce several variables}), presenting two distinct proofs: one leveraging the harmonic properties established in Section 3 and the other relying on Almansi-type decompositions.

\section{Preliminaries}
\subsection{Clifford algebras and monogenic functions}
Let $m\in\mathbb{N}$, let $\{e_0,e_1,...,e_m\}$ be an orthonormal basis of $\mathbb{R}^{m+1}$ and let us define the following product rule
\begin{equation}
\label{eq product clifford}
    \begin{split}
&e_i\cdot e_0=e_0\cdot e_i=e_i,\qquad\forall i=1,...,m\\
&e_i\cdot e_j+e_j\cdot e_i=-2\delta_{ij},\qquad\forall i,j=1,...,m.
    \end{split}
\end{equation}
The Clifford algebra $\mathbb{R}_m$ is the vector space of dimension $2^m$ generated by 
\begin{equation*}
    \begin{split}
&\{e_0;e_1,...,e_m;e_1\cdot e_2,\dots;e_1\cdot e_2\cdot e_3,\dots;\dots;e_1\cdot...\cdot e_m\}\\
&=\{e_A=e_{\{i_1,...,i_k\}}: A=\{i_1,...,i_k\}\in\mathcal{P}(\{1,...,m\}), 1\leq i_1<\dots<i_k\leq m\},
    \end{split}
\end{equation*}
i.e. by all the possible ordered products of $e_0,\dots,e_m$ and it is
endowed with the product \eqref{eq product clifford}, extended by associativity and bilinearity to the all algebra. For $m>2$, $\mathbb{R}_m$ is an associative, non commutative algebra, which is not an integral domain. For $m=1$, it reduces to the complex numbers, while $\mathbb{R}_2\cong\mathbb{H}$.

Any Clifford number $x\in\mathbb{R}_m$ can be uniquely written as 
    $x=\sum_{A\in\mathcal{P}(\{1,...,m\})}x_Ae_A$,
where $x_A\in\mathbb{R}$ and if $A=\{i_1,...,i_k\}$, with $1\leq i_1<\dots<i_k$, $e_A\coloneqq e_{i_1}\cdot\dots\cdot e_{i_k}$. We write $e_0=e_\emptyset=1$, the unity of the algebra.
We can decompose the Clifford algebra $\mathbb{R}_m$ as
$\mathbb{R}_m=\bigoplus_{k=1}^{m}\mathbb{R}_m^k$,
where $\mathbb{R}_m^k=\{x=[x]_k=x_Ae_A\in\mathbb{R}_m:|A|=k\}$. According to this decomposition, any Clifford number can be respresented as $x=[x]_0+[x]_1+\dots+[x]_m$.
Elements contained in $\mathbb{R}_m^0=\spn(e_0)\eqqcolon\mathbb{R}$ or $\mathbb{R}_m^1=\spn(e_1,\dots,e_m)$ will be called real numbers and vectors, respectively. Elements of the form $x=x_0+\sum_{|A|=1}x_Ae_A=x_0+\sum_{j=1}^mx_je_j$ belong to $\mathbb{R}_m^0\oplus\mathbb{R}_m^1=\spn(e_0,e_1,\dots,e_m)$ and are called paravectors. The paravector subspace is isomorphic to $\mathbb{R}^{m+1}$ by the isomorphism 
    $\mathbb{R}^{m+1}\ni(x_0,x_1,\dots,x_m)\mapsto x_0+\sum_{j=1}^mx_je_j\in\mathbb{R}_m^0\oplus\mathbb{R}_m^1$.
For this reason, we will simply denote the subspace of paravectors with $\mathbb{R}^{m+1}$.

We can define a conjugations on Clifford numbers: if $x=[x]_0+[x]_1+\dots+[x]_m$, then 
\begin{equation}
\label{eq clifford conjugation}
    \overline{x}=[x]_0-[x]_1-[x]_2+[x]_3+[x]_4-\dots=\sum_{j=1}^m(-1)^{\frac{j(j+1)}{2}}[x]_j.
\end{equation}
This makes $\mathbb{R}_m$ a real associative $\ast$-algebra.
We can select the set of imaginary units in $\mathbb{R}^{m+1}$ as
\begin{equation*}
    \mathbb{S}\coloneqq\{x\in \mathbb{R}^{m+1} : x_0=0, x_1^2+\dots+x_m^2=1\}=\{x\in \mathbb{R}^{m+1} : \overline{x}=-x,\,x^2=-1\}.
\end{equation*}
For any $J\in\mathbb{S}$, the subspace $\mathbb{C}_J\coloneqq\spn(1,J)$ is a $^\ast$-algebra isomorphic to $\mathbb{C}$, via the $^\ast$-algebra isomorphism 
\begin{equation}
\label{eq isomorphism phi J}
    \phi_J:\mathbb{C}\to\mathbb{C}_J,\qquad \phi_J(a+ib)\coloneqq a+Jb.
\end{equation}
In particular, for any $x\in \mathbb{R}^{m+1}\setminus\mathbb{R}$, there exist unique $\alpha,\beta\in\mathbb{R}$, with $\beta>0$ and $J\in\mathbb{S}$ such that $x=\alpha+J\beta$. We will use this representation several times throughout the paper.

We now give the definition of monogenic function.
\begin{defn}
    Let $\Omega\subset\mathbb{R}^{m+1}$ be an open set and let 
    \begin{equation}
    \label{eq defn dirac operator}
\partial\coloneqq\frac{1}{2}\left(\frac{\partial}{\partial x_0}-\sum_{i=1}^ke_i\frac{\partial}{\partial x_i}\right),\qquad\overline{\partial}\coloneqq\frac{1}{2}\left(\frac{\partial}{\partial x_0}+\sum_{i=1}^ke_i\frac{\partial}{\partial x_i}\right).
    \end{equation}
    A differentiable function $f:\Omega\to\mathbb{R}_m$ is called monogenic if $\overline{\partial}f=0$. We will denote by $\mathcal{M}(\Omega)$ the set of monogenic functions with domain $\Omega$. 
The importance of these operators 
is evident as they
factorize the Laplacian, indeed
\begin{equation}
\label{equazione fattorizzazione laplaciano}
   4\partial\overline{\partial}=4\overline{\partial}\partial=\Delta_{m+1}.
\end{equation}
\end{defn}


\subsection{One Clifford variable slice functions}
\label{One variable theory}
We present the theory of slice regular functions of one variable as in \cite{SRFonAA}, adjusting it to Clifford algebras.

Let $\{1,e_1\}$ denote a basis of $\mathbb{R}^2$. Consider the algebra $\mathbb{R}_m\otimes\mathbb{R}^2=\{a+e_1b:a,b\in \mathbb{R}_m\}$, where $1$ is the unity of $\mathbb{R}_m\otimes\mathbb{R}^2$ and $e_1^2=-1$, thus the product of any elements of $\mathbb{R}_m\otimes\mathbb{R}^2$ is defined by bilinearity as
\begin{equation*}
    (a+e_1b)(\alpha+e_1\beta)=a\alpha-b\beta+e_1(a\beta+b\alpha),
\end{equation*}
where $a\alpha$ is the product of $\mathbb{R}_m$, whenever $a,\alpha\in \mathbb{R}_m$. Equip $\mathbb{R}_m\otimes\mathbb{R}^2$ with the conjugation
    $\overline{a+e_1b}=a-e_1b$.
This makes $(\mathbb{R}_m\otimes\mathbb{R}^2,\overline{\phantom{x}})$ a $\ast$-algebra, too.
\begin{defn}
    A set $D\subset\mathbb{C}$ is called symmetric if it is invariant with respect to conjugation, which means that
$z\in D\iff\overline{z}\in D$.
  Assume $D$ open, as well. A function $F:D\to \mathbb{R}_m\otimes\mathbb{R}^2$ is called stem function if it is complex instrinsic, i.e. it satisfies
\begin{equation}
\label{eq defn stem functions}
    F(\overline{z})=\overline{F(z)},\qquad\forall z\in D.
\end{equation}
If $F=F_0+e_1F_1$, with $F_0,F_1:D\to \mathbb{R}_m$, $F$ is a stem function if, and only if,
\begin{equation*}
    F_0(\overline{z})=F_0(z),\quad F_1(\overline{z})=-F_1(z),\qquad \forall z\in D.
\end{equation*}
The set of stem functions over $D$ is denoted by $Stem(D)$.
\end{defn}

Left multiplication by $e_1$ defines a complex structure on $\mathbb{R}_m\otimes\mathbb{R}^2$. Given a stem function $F\in\mathcal{C}^1(D)$, consider the following Wirtinger operators
\begin{equation*}
    \frac{\partial F}{\partial z}=\frac{1}{2}\left(\frac{\partial F}{\partial\alpha}-e_1\frac{\partial F}{\partial\beta}\right),\qquad\frac{\partial F}{\partial\overline{z}}=\frac{1}{2}\left(\frac{\partial F}{\partial\alpha}+e_1\frac{\partial F}{\partial\beta}\right).
\end{equation*}
    A stem function is said to be holomorphic if $F\in\ker(\partial/\partial\overline{z})$. This is equivalent to require its components $F_0,F_1$ satisfy the following Cauchy-Riemann equations:
\begin{equation*}
    \frac{\partial F_0}{\partial\alpha}=\frac{\partial F_1}{\partial\beta},\qquad \frac{\partial F_0}{\partial\beta}=-\frac{\partial F_1}{\partial\alpha}.
\end{equation*}

\begin{defn}
    Given a symmetric set $D\subset \mathbb{C}$, we define its circularization $\Omega_D$ in $\mathbb{R}^{m+1}$ as
\begin{equation*}
    \Omega_D\coloneqq\bigcup_{J\in\mathbb{S}}\phi_J(D)=\{\alpha+J\beta:\alpha+i\beta\in D, J\in\mathbb{S}\}\subset \mathbb{R}^{m+1}.
\end{equation*}
A set $\Omega$ is called circular, or axially symmetric, if $\Omega=\Omega_D$ for some symmetric set $D\subset \mathbb{C}$. An axially symmetric set $\Omega_D$ is called slice domain if $D\cap\mathbb{R}\neq\emptyset$ and product domain if $D\cap\mathbb{R}=\emptyset$.
\end{defn}
Every stem function $F:D\to \mathbb{R}_m\otimes\mathbb{R}^2$ induces uniquely a function $f:\Omega_D\to \mathbb{R}_m$ as follows: 
\begin{defn}
Let $F=F_0+e_1F_1:D\to \mathbb{R}_m\otimes\mathbb{R}^2$ be a stem function. We define $f:\Omega_D\to\mathbb{R}_m$ for any $x=\alpha+J\beta=\phi_J(z)\in\Omega_D$, as
\begin{equation}
\label{eq definizione slice functions}
    f(x)=F_0(z)+JF_1(z).
\end{equation}
We will say that $f$ is induced by $F$ ($f=\mathcal{I}(F)$) and such induced functions are called slice functions. 
We denote by $\mathcal{S}(\Omega_D)$ the set of slice functions over $\Omega_D$ and by
    $\mathcal{I}:Stem(D)\to\mathcal{S}(\Omega_D)$
the map sending a stem function to its induced slice function.
\end{defn}

Alternatively, one can also define slice functions through commutative diagrams. For any $J\in\mathbb{S}$, extend $\phi_J:\mathbb{R}_m\otimes\mathbb{R}^2\to\mathbb{R}_m$, $\phi_J(a+e_1 b)=a+Jb$. 
Given $F\in Stem(D)$, its induced slice function $f=\mathcal{I}(F)$ is defined as the unique slice function that makes the following diagram commutative for any $J\in\mathbb{S}$:
\begin{center}
    \begin{tikzcd}
	{D} && {\mathbb{R}_m\otimes\mathbb{R}^2} \\
	& \circlearrowleft \\
	{\Omega_D} && {A.}
	\arrow["F", from=1-1, to=1-3]
	\arrow["{\phi_J}"', from=1-1, to=3-1]
	\arrow["f"', from=3-1, to=3-3]
	\arrow["{\phi_J}", from=1-3, to=3-3]
\end{tikzcd}
\end{center}

\begin{defn}
   Let $f=\mathcal{I}(F)\in\mathcal{S}(\Omega_D)$ be a slice function. If $F$ is holomorphic, we say that $f$ is slice regular and we denote with $\mathcal{S}\mathcal{R}(\Omega_D)$ the set of slice regular functions over $\Omega_D$.
\end{defn}


Since $\partial F/\partial z$ and $\partial F/\partial\overline{z}$ are stem functions, we
 can also define slice derivatives of a slice function $f=\mathcal{I}(F)\in\mathcal{S}(\Omega_D)\cap\mathcal{C}^1(\Omega_D)$ as
\begin{equation*}
    \frac{\partial f}{\partial x}=\mathcal{I}\left(\frac{\partial F}{\partial z}\right),\qquad \frac{\partial f}{\partial x^c}=\mathcal{I}\left(\frac{\partial F}{\partial \overline{z}}\right).
\end{equation*}
In particular, a slice function $f$ is slice regular if and only if $\partial f/\partial x^c=0$.


Every slice function is uniquely determined by its value on two distinct half planes $\mathbb{C}_J^+$ and $\mathbb{C}_K^+$, if $J-K$ is invertible. 
\begin{prop}[\cite{SRFonAA}, Proposition 6]
\label{Prop representation formulas}
    Let $f\in\mathcal{S}(\Omega_D)$, define its restriction on the complex half plane $f^+_J\coloneqq f|_{\mathbb{C}^+_J\cap\Omega_D}$, then, for any $x=\alpha+I\beta\in\Omega_D$ we have
\begin{equation*}
    f(\alpha+I\beta)=(I-K)(J-K)^{-1}f^+_J(\alpha+J\beta)-(I-J)(J-K)^{-1}f^+_K(\alpha+K\beta).
\end{equation*}
\end{prop}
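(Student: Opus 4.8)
The plan is to pass through the stem function that induces $f$ and to recover its two components from the prescribed slice data. Write $f=\mathcal{I}(F)$ with $F=F_0+e_1F_1$, $F_0,F_1\colon D\to\mathbb{R}_m$, fix $x=\alpha+I\beta\in\Omega_D$, and set $z=\alpha+i\beta\in D$. By the definition of induced slice function \eqref{eq definizione slice functions}, evaluating $f$ on any slice simply reads off $F_0(z)$ and $F_1(z)$ with the relevant imaginary unit placed in front of the second component; in particular
\begin{gather*}
f(\alpha+I\beta)=F_0(z)+IF_1(z),\qquad f_J^+(\alpha+J\beta)=F_0(z)+JF_1(z),\\
f_K^+(\alpha+K\beta)=F_0(z)+KF_1(z).
\end{gather*}
Thus the two restrictions $f_J^+,f_K^+$, evaluated at the points sharing the same $\alpha,\beta$, furnish two linear observations of the unknowns $F_0(z),F_1(z)$.

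Writing $a\coloneqq f_J^+(\alpha+J\beta)$ and $b\coloneqq f_K^+(\alpha+K\beta)$, I would first isolate $F_1(z)$ by subtracting the last two identities, which gives $a-b=(J-K)F_1(z)$. Since $J-K$ is invertible by hypothesis, left multiplication yields $F_1(z)=(J-K)^{-1}(a-b)$, and then $F_0(z)=a-JF_1(z)$. Substituting these into $f(\alpha+I\beta)=F_0(z)+IF_1(z)$ produces
\[
f(\alpha+I\beta)=a+(I-J)(J-K)^{-1}(a-b).
\]
Expanding and using only the elementary identities $(J-K)(J-K)^{-1}=1$ and $(J-K)+(I-J)=I-K$, the coefficient of $a$ collapses to $(I-K)(J-K)^{-1}$ while the coefficient of $b$ is $-(I-J)(J-K)^{-1}$, which is exactly the claimed formula.

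The only delicate point is the noncommutativity of $\mathbb{R}_m$: all scalar factors $I,J,K$ sit to the left of the $\mathbb{R}_m$-valued components, so the order of elimination matters. Solving for $F_1(z)$ first means that the inverse $(J-K)^{-1}$ is only ever applied, by left multiplication, to the element $a-b$, so no commutation between $(J-K)^{-1}$ and $I,J,K$ is needed. Verifying the stated identity by brute-force expansion instead would force one to establish $J(J-K)^{-1}K=K(J-K)^{-1}J$, which does hold — it follows from the vector inverse formula $v^{-1}=\overline{v}/|v|^2$ together with $J^2=K^2=-1$ — but is an unnecessary detour. Finally, a bookkeeping remark: since $f_J^+$ lives on $\mathbb{C}_J^+$, one takes the representative of $x$ with $\beta\geq 0$; the argument is otherwise insensitive to the sign of $\beta$, being carried out entirely at the level of the stem components $F_0(z),F_1(z)$ (and for real $x$ one has $F_1(z)=0$, so the formula holds trivially).
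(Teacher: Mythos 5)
Your proof is correct: the paper itself does not prove this proposition (it imports it from \cite{SRFonAA}, Proposition 6), and your argument --- recovering $F_1(z)$ and $F_0(z)$ from the two prescribed slice values and recombining, with all imaginary units kept as left multipliers so that only $(J-K)(J-K)^{-1}=1$ and $(J-K)+(I-J)=I-K$ are needed --- is essentially the same linear-system argument used in that reference. Your bookkeeping remarks (taking the representative with $\beta\geq 0$, and the real case where $F_1$ vanishes by oddness) are also handled correctly.
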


\begin{imposs}
The theory of slice functions over a real alternative $^\ast$-algebras $A$ is defined on open symmetric sets of the quadratic cone $Q_A$ (\cite{SRFonAA}). In the specific context of Clifford algebras, it is common to consider the domain of slice functions restricted to the paravector subspace $\mathbb{R}^{m+1}$, as in the theory of monogenic functions. Fortunately, the paravector subspace is always contained in the quadratic cone $Q_{\mathbb{R}_m}$ and thanks to Proposition \ref{Prop representation formulas}, the restriction to $\mathbb{R}^{m+1}$ uniquely determines the slice function. 
\end{imposs}

The previous formula is known as representation formula. For $I=J=-K$, it reduces to
\begin{equation*}
    f(x)=\frac{1}{2}\left(f(x)+f(\overline{x})\right)+\frac{1}{2}\left(f(x)-f(\overline{x})\right),
\end{equation*}
with $x=\alpha+J\beta$ and $\overline{x}=\alpha-J\beta$. If $x=\phi_J(z)$, it holds
    $\frac{1}{2}\left(f(x)+f(\overline{x})\right)=F_0(z) $
and if $\operatorname{Im}(x)\neq0$,
    $[2\operatorname{Im}(x)]^{-1}\left(f(x)-f(\overline{x})\right)=F_1(z)/\beta$,
where $F_0(z)$ and $F_1(z)/\beta$ are $ \mathbb{R}_m$-valued stem functions. This leads to the following
\begin{defn}
Given $f=\mathcal{I}(F)\in\mathcal{S}(\Omega_D)$, with $F=F_0+e_1F_1$, the spherical value and the spherical derivative of $f$ are defined respectively as
\begin{equation*}
    \begin{split}
f^\circ_s(x)&\coloneqq\mathcal{I}(F_0)(x)=\frac{1}{2}\left(f(x)+f(\overline{x})\right),\qquad\forall x\in\Omega_D\\
f'_s(x)&\coloneqq\mathcal{I}(F_1/\operatorname{Im}(z))(x)=[2\operatorname{Im}(x)]^{-1}\left(f(x)-f(\overline{x})\right),\qquad\forall x\in\Omega_D\setminus\mathbb{R}.
    \end{split}
\end{equation*}
Moreover, they decompose $f$ through
\begin{equation*}
    f(x)=f^\circ(x)+\operatorname{Im}(x)f'_s(x).
\end{equation*}
\end{defn}
Since $F_0(z)$ and $F_1(z)/\operatorname{Im}(z)$ are $ \mathbb{R}_m$-valued,
$f^\circ_s(x)$ and $f'_s(x)$ depends only on $\operatorname{Re}(x)$ and $|\operatorname{Im}(x)|$. This means that $f^\circ_s$ and $f'_s$ are constant on every sphere $\mathbb{S}_{\alpha,\beta}=\{\alpha+I\beta: I\in\mathbb{S}\}$. This subclass of functions deserves a name.

\begin{defn}
    Let $f=\mathcal{I}(F)$ be a slice function. Suppose that $F=F_0$, i.e. $F$ is an $ \mathbb{R}_m$-valued stem function. Then we call $f$ a circular slice function. Namely, circular slice functions are slice functions which are constant over "spheres" $\mathbb{S}_{\alpha,\beta}=\{\alpha+I\beta: I\in\mathbb{S}\}$.
\end{defn}

\begin{prop}
    Let $f$ be a circular slice function. Then $f^\circ_s=f$ and $f'_s=0$. In particular, for any slice function $f$ it holds $(f'_s)'_s=(f^\circ_s)'_s=0$, $(f^\circ_s)^\circ_s=f^\circ_s$ and $(f'_s)^\circ_s=f'_s$.
\end{prop}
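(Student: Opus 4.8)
The plan is to reduce the whole statement to one observation: for a circular slice function the second stem component vanishes, and the spherical value and spherical derivative of \emph{any} slice function are again circular. Everything then follows by applying the circular case twice.

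First I would settle the two identities for a circular $f$. By definition, $f$ circular means $f=\mathcal{I}(F)$ with $F=F_0$ an $\mathbb{R}_m$-valued stem function, i.e. its $e_1$-component is $F_1\equiv0$. Then $f^\circ_s=\mathcal{I}(F_0)=\mathcal{I}(F)=f$, which is the first identity, and $f'_s=\mathcal{I}(F_1/\operatorname{Im}(z))=\mathcal{I}(0)=0$, which is the second. This step is a direct reading of the definitions of $f^\circ_s$ and $f'_s$.

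Next I would verify that for an arbitrary slice function $f=\mathcal{I}(F_0+e_1F_1)$ both $f^\circ_s$ and $f'_s$ are circular. For $f^\circ_s=\mathcal{I}(F_0)$ this is immediate, since $F_0$ is $\mathbb{R}_m$-valued and satisfies $F_0(\overline{z})=F_0(z)$, so it is itself an $\mathbb{R}_m$-valued stem function. For $f'_s=\mathcal{I}(F_1/\operatorname{Im}(z))$ the only point needing a check is that $G\coloneqq F_1/\operatorname{Im}(z)$ is a genuine $\mathbb{R}_m$-valued stem function: using $F_1(\overline{z})=-F_1(z)$ together with $\operatorname{Im}(\overline{z})=-\operatorname{Im}(z)$, one obtains $G(\overline{z})=G(z)$, so $G$ is even and $\mathbb{R}_m$-valued, hence $f'_s=\mathcal{I}(G)$ is circular. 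This small parity computation is the one non-formal point, and it is the step I would expect to be the main (albeit minor) obstacle.

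Finally, the four identities for arbitrary $f$ follow by applying the circular case to $g=f^\circ_s$ and to $g=f'_s$. Taking $g=f'_s$ (circular) gives $g^\circ_s=g$ and $g'_s=0$, i.e. $(f'_s)^\circ_s=f'_s$ and $(f'_s)'_s=0$; taking $g=f^\circ_s$ (circular) gives $g^\circ_s=g$ and $g'_s=0$, i.e. $(f^\circ_s)^\circ_s=f^\circ_s$ and $(f^\circ_s)'_s=0$. This closes the proof.
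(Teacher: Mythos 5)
Your proof is correct. The paper states this proposition without proof, as an immediate consequence of the definitions, and your argument --- reading circularity at the stem-function level, checking that $F_0$ and $F_1/\operatorname{Im}(z)$ are themselves $\mathbb{R}_m$-valued stem functions (the parity computation $G(\overline{z})=(-F_1(z))/(-\operatorname{Im}(z))=G(z)$ being the only point requiring verification), and then applying the circular case to $f^\circ_s$ and $f'_s$ --- is exactly the standard reasoning the paper implicitly relies on.
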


We can give stem functions and slice functions the structure of algebras, by defining a product of stem functions, that  will induce one on slice functions. 
\begin{defn}
    Let $F,G\in Stem(D)$, with $F=F_0+e_1 F_1$ and $G=G_\emptyset+e_1G_1$. Define $$F\otimes G:=F_0 G_\emptyset-F_1G_1+e_1(F_0 G_1+F_1G_0).$$
It is easy to prove that $F\otimes G$ is a stem function. Now, if $f=\mathcal{I}(F)$ and $g=\mathcal{I}(G)$, define $f\odot g\coloneqq\mathcal{I}(F\otimes G).$
\end{defn}
As a consequence, $(Stem(D),\otimes)$ and $(\mathcal{S}(\Omega_D),\odot)$ form an algebra and $\mathcal{I}:(Stem(D),\otimes)\to (\mathcal{S}(\Omega_D),\odot)$ is an algebra isomorphism. Observe tha, in general, $(f\odot g)(x)\neq f(x)g(x)$. With respect to this product, the spherical derivative satisfies a Lebniz rule, in which evaluation is replaced by spherical value:
    $(f\odot g)'_s=f'_s\odot g^\circ_s+f^\circ_s\odot g'_s$.

\subsection{Several Clifford variables slice functions}
We follow \cite{Several} for the several variables version of the theory of slice regular functions. We restrict our attention to Clifford algebras, which are associative.  

Let $n\in\mathbb{N}^*$ be a positive integer and let $\mathcal{P}(n)\coloneqq\mathcal{P}(\{1,...,n\})$ denote all possible subsets of $\{1,...,n\}$. Given a sequence $x=(x_1,\dots x_n)\in (\mathbb{R}_m)^n$, define its ordered product as $[x]=x_1$ if $n=1$ and for $n\geq 2$,
\begin{equation*}
    [x]=[x_1,\dots x_n]=x_1\cdot x_2\cdots x_{n-1}\cdot x_n.
\end{equation*}
Moreover, given $y\in \mathbb{R}_m$, we denote
\begin{equation*}
    [x,y]=[x_1,\dots,x_n,y]=x_1\cdot x_2\cdots x_{n-1}\cdot x_{n}\cdot y.
\end{equation*}
Let $K=\{k_1,...,k_p\}\in\mathcal{P}(n)$ be an ordered set of indexes, with $k_1<\dots<k_p$. If $K=\emptyset$, then set $x_K=\emptyset$ and $[x_K]=1$; if $K\neq 0$, define $x_K=(x_{k_1},\dots,x_{k_p})\in (\mathbb{R}_m)^p$, so by the definition above
\begin{equation*}
    [x_K]=[x_{k_1},\dots x_{k_p}]=x_{k_1}\cdot x_{k_2}\cdots x_{k_{p-1}}\cdot x_{k_p}
\end{equation*}
and
\begin{equation*}
    [x_K,y]=[x_{k_1},\dots x_{k_p},y]=x_{k_1}\cdot x_{k_2}\cdots x_{k_{p-1}}\cdot x_{k_p}\cdot y.
\end{equation*}
\begin{defn}
    Given $z=(z_1,\dots,z_n)\in\mathbb{C}^n$, define $\overline{z}^h\coloneqq(z_1,\dots,z_{h-1},\overline{z}_h,z_{h+1},\dots,z_n)$, for any $h\in\{1,...,n\}$.
A set $D\subset\mathbb{C}^n$ is called symmetric if it is invariant with respect to complex conjugation in any variable, i.e. if $z\in D\iff\overline{z}^h\in D$, for every $ h=1,...,n$. 
\end{defn}
Let $\{e_1,\dots,e_n\}$ be an orthonormal basis of $\mathbb{R}^n$ and denote with $\{e_K\}_{K\in\mathcal{P}(n)}$ a basis of $\mathbb{R}^{2^n}$. 
\begin{defn}
    Let $D\subset\mathbb{C}^n$ be an open symmetric set and consider a function $F:D\subset\mathbb{C}^n\to \mathbb{R}_m\otimes\mathbb{R}^{2^n}$, $F(z)=\sum_{K\in\mathcal{P}(n)}e_KF_K(z)$ with $F_K:D\to \mathbb{R}_m$. We call $F$ a stem function if $F_K(\overline{z}^h)=(-1)^{|K\cap\{h\}|}F_K(z)$ or equivalently
\begin{equation}
\label{eq properies stem functions sev var}
    F_K(\overline{z}^h)=\left\{
\begin{array}{ll}
     F_K(z)&\text{ if }h\notin K  \\
    -F_K(z)&\text{ if }h\in K,
\end{array}
    \right.
\end{equation}
for every $z\in D$, every $K\in\mathcal{P}(n)$ and any $h\in\{1,\dots,n\}$. Again, we use the symbol $Stem(D)$ to denote the set of stem functions $F:D\to \mathbb{R}_m\otimes\mathbb{R}^{2^n}$.
\end{defn}
Equip $\mathbb{R}^{2^n}$ with the family of commutative complex structures $\mathcal{J}=\left\{\mathcal{J}_h:\mathbb{R}^{2^n}\rightarrow\mathbb{R}^{2^n}\right\}_{h=1}^n,$
where each $\mathcal{J}_h$ is defined over any basis element $e_K$ of $\mathbb{R}^{2^n}$ as
\begin{equation*}
    \mathcal{J}_h(e_K):=(-1)^{|K\cap\{h\}|}e_{K\Delta\{h\}}=\left\{\begin{array}{ll}
e_{K\cup\{h\}} &\text{ if }h\notin K  \\
-e_{K\setminus\{h\}} &\text{ if }h\in K,
    \end{array}\right.
\end{equation*}
where $K\Delta H=(K\cup H)\setminus(K\cap H)$
and extend it by linearity to all $\mathbb{R}^{2^n}$. $\mathcal{J}$ induces a family of commutative complex structure on $\mathbb{R}_m\otimes\mathbb{R}^{2^n}$ (by abuse of notation, we use the same symbol) 
$\mathcal{J}=\left\{\mathcal{J}_h: \mathbb{R}_m\otimes\mathbb{R}^{2^n}\rightarrow \mathbb{R}_m\otimes\mathbb{R}^{2^n}\right\}_{h=1}^n$ according to the formula $$\mathcal{J}_h(x\otimes a):=x\otimes\mathcal{J}_h(a)\qquad\forall x\in \mathbb{R}_m,\quad\forall a\in\mathbb{R}^{2^n}.$$
We can associate two Cauchy-Riemann operators to each complex structure $\mathcal{J}_h$. 
\begin{defn}
    Given a stem function $F\in Stem(D)\cap\mathcal{C}^1(D)$, we define
\begin{equation*}
    \partial_hF:=\dfrac{1}{2}\left(\dfrac{\partial F}{\partial \alpha_h}-\mathcal{J}_h\left(\dfrac{\partial F}{\partial\beta_h}\right)\right),\qquad \overline{\partial}_hF:=\dfrac{1}{2}\left(\dfrac{\partial F}{\partial \alpha_h}+\mathcal{J}_h\left(\dfrac{\partial F}{\partial\beta_h}\right)\right).
\end{equation*}
We call $F=\sum_{K\in\mathcal{P}(n)}e_KF_K$ $h$-holomorphic (with respect to $\mathcal{J}$) if $F\in\ker\overline{\partial}_h$ and it is called holomorphic if it is $h$-holomorphic for every $h=1,...,n$.
\end{defn}
We can give the definition of holomorphic stem function through a system of Cauchy-Riemann equations.
\begin{prop}[\cite{Several},Lemma 3.12]
    Let $F$ be a stem function. Then $F$ is $h$-holomorphic if and only if
    \begin{equation}
    \label{Cauchy-Riemann equations}
    \dfrac{\partial F_K}{\partial\alpha_h}=    \dfrac{\partial F_{K\cup\{h\}}}{\partial\beta_h},\qquad
    \dfrac{\partial F_K}{\partial\beta_h}=-    \dfrac{\partial F_{K\cup\{h\}}}{\partial\alpha_h},\qquad\forall K\in\mathcal{P}(n), h\notin K.
\end{equation}
\end{prop}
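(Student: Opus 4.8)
The plan is to prove the equivalence by a direct computation, expanding the operator $\overline{\partial}_h$ in the basis $\{e_K\}_{K\in\mathcal{P}(n)}$ and reading off the vanishing conditions one basis vector at a time. Since differentiation acts only on the scalar components $F_K$, and the complex structure $\mathcal{J}_h$ is $\mathbb{R}_m$-linear and touches only the basis factors $e_K$, I would first record
\begin{equation*}
\frac{\partial F}{\partial\alpha_h}=\sum_{K\in\mathcal{P}(n)}e_K\,\frac{\partial F_K}{\partial\alpha_h},\qquad \mathcal{J}_h\!\left(\frac{\partial F}{\partial\beta_h}\right)=\sum_{K\in\mathcal{P}(n)}\mathcal{J}_h(e_K)\,\frac{\partial F_K}{\partial\beta_h}.
\end{equation*}

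The decisive observation is that $\mathcal{J}_h$ permutes the basis in pairs: it sends $e_K\mapsto e_{K\cup\{h\}}$ when $h\notin K$ and $e_{K\cup\{h\}}\mapsto -e_K$, so on each two-dimensional subspace $\spn(e_K,e_{K\cup\{h\}})$ with $h\notin K$ it acts exactly as multiplication by $i$ does on $\spn(1,i)$. Accordingly I would reorganize the sum over $\mathcal{P}(n)$ as a sum over the $2^{n-1}$ pairs $\{K,K\cup\{h\}\}$ with $h\notin K$. Substituting the values of $\mathcal{J}_h$ and collecting the coefficient of each basis vector, the coefficient of $e_K$ in $2\overline{\partial}_h F$ becomes $\frac{\partial F_K}{\partial\alpha_h}-\frac{\partial F_{K\cup\{h\}}}{\partial\beta_h}$, while the coefficient of $e_{K\cup\{h\}}$ becomes $\frac{\partial F_{K\cup\{h\}}}{\partial\alpha_h}+\frac{\partial F_K}{\partial\beta_h}$; the crucial minus sign in the first of these comes precisely from $\mathcal{J}_h(e_{K\cup\{h\}})=-e_K$.

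Since the $e_K$ are linearly independent over $\mathbb{R}_m$, the equation $\overline{\partial}_h F=0$ holds if and only if every such coefficient vanishes. Setting the coefficient of $e_K$ to zero yields $\partial F_K/\partial\alpha_h=\partial F_{K\cup\{h\}}/\partial\beta_h$, and setting the coefficient of $e_{K\cup\{h\}}$ to zero yields $\partial F_K/\partial\beta_h=-\partial F_{K\cup\{h\}}/\partial\alpha_h$, which are exactly the two Cauchy--Riemann identities in \eqref{Cauchy-Riemann equations}, ranging over all $K$ with $h\notin K$. This establishes the equivalence in both directions simultaneously.

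I expect the only genuine difficulty to be the index and sign bookkeeping: one must correctly split $\mathcal{P}(n)$ into the complementary families $\{K:h\notin K\}$ and $\{K:h\in K\}$, match each $K$ containing $h$ with its partner $K\setminus\{h\}$, and track the sign $\mathcal{J}_h$ produces on the ``odd'' member of each pair. Everything else is routine linearity. As a sanity check, one can verify consistency with the stem-function relations \eqref{eq properies stem functions sev var} and recover the one-variable Cauchy--Riemann equations of Section \ref{One variable theory} in the case $n=1$.
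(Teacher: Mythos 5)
Your proof is correct: the expansion of $2\overline{\partial}_hF$ in the basis $\{e_K\}$, the pairing of $e_K$ with $e_{K\cup\{h\}}$ under $\mathcal{J}_h$, and the resulting coefficient identities are exactly right, and linear independence of the $e_K$ over $\mathbb{R}_m$ legitimately reduces $\overline{\partial}_hF=0$ to the system \eqref{Cauchy-Riemann equations}. Note that the paper itself gives no proof of this statement --- it is quoted from \cite{Several} (Lemma 3.12) --- and your direct basis-expansion computation is precisely the standard argument used there, so nothing is missing.
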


For any $J_1,\dots J_n\in\mathbb{S}$, define 
\begin{equation*}
\phi_{J_1}\times...\times\phi_{J_n}:\mathbb{C}^n\ni(z_1,...,z_n)\mapsto\left(\phi_{J_1}(z_1),...,\phi_{J_n}(z_n)\right)\in(\mathbb{R}^{m+1})^n,
\end{equation*}
where $\phi_J$ is defined in \eqref{eq isomorphism phi J}.

\begin{defn}
    Given a symmetric set $D\subset\mathbb{C}^n$, we define its circularization $\Omega_D\subset(\mathbb{R}^{m+1})^n$ as $\Omega_D\coloneqq\bigcup_{(J_1,\dots,J_n)\in\mathbb{S}^n}(\phi_{J_1}\times\dots\times\phi_{J_n})(D)$, namely
\begin{equation*}
    \Omega_D=\{(\alpha_1+J_1\beta_1,\dots,\alpha_n+J_n\beta_n):(\alpha_1+i\beta_1,\dots,\alpha_n+i\beta_n)\in D, J_1,\dots,J_n\in\mathbb{S}\}.
\end{equation*}
A subset $\Omega\subset (\mathbb{R}^{m+1})^n$ is called circular if it is the circularization of a symmetric set $D$, i.e. $\Omega=\Omega_D$ for some $D\subset\mathbb{C}^n$. 
\end{defn}

\begin{defn}
\label{defn slice functions}
    A map $f:\Omega_D\subset(\mathbb{R}^{m+1})^n\to \mathbb{R}_m$ is called a slice function if there exist a stem function $F=\sum_{K\in\mathcal{P}(n)}e_KF_K:D\to \mathbb{R}_m\otimes \mathbb{R}^{2^n}$, such that, for every $x\in\Omega_D$
\begin{equation*}
    f(x)=\sum_{K\in\mathcal{P}(n)}[J_K,F_K(z)],
\end{equation*}
where $x=(\phi_{J_1}\times\dots\times\phi_{J_n})(z)$ and $J=(J_1,\dots,J_n)$. A slice regular function is a slice function induced by a holomorphic stem function. A slice function is called slice preserving whenever the components of its inducing stem function are real valued.
  We denote with $\mathcal{S}(\Omega_D)$, $\mathcal{S}\mathcal{R}(\Omega_D)$ and $\mathcal{S}_\mathbb{R}(\Omega_D)$ respectively the set of slice, slice regular and slice preserving functions on $\Omega_D\subset(\mathbb{R}^{m+1})^n$. Again, $\mathcal{I}:Stem(D)\rightarrow \mathcal{S}(\Omega_D)$ will be the map sending a stem function to its induced slice function. 
\end{defn}

Equivalently, we can define $f$ as the unique slice function that makes the following diagram commutative for any $J_1,...,J_n\in\mathbb{S}$:
\begin{center}
    \begin{tikzcd}
	{D} && {\mathbb{R}_m\otimes\mathbb{R}^{2^n}} \\
	& \circlearrowleft \\
	{\Omega_D} && {\mathbb{R}_m,}
	\arrow["F", from=1-1, to=1-3]
	\arrow["{\phi_{J_1}\times...\times\phi_{J_n}}"', from=1-1, to=3-1]
	\arrow["f"', from=3-1, to=3-3]
	\arrow["{\Phi_{J_1,...,J_n}}", from=1-3, to=3-3]
\end{tikzcd}
\end{center}
where $$\Phi_{J_1,...,J_n}:\mathbb{R}_m\otimes\mathbb{R}^{2^n}\ni \sum_{K\in\mathcal{P}(n)}e_Ka_K\mapsto \sum_{K\in\mathcal{P}(n)}\left[J_K,a_K\right]\in \mathbb{R}_m.$$

 
 If $F$ is a stem function, so are $\partial_hF$ and $\overline{\partial}_hF$ \cite[Lemma 3.9]{Several}, thus, if $f=\mathcal{I}(F)\in\mathcal{S}^1(\Omega_D):=\mathcal{I}(Stem(D)\cap\mathcal{C}^1(D))$, we can define the partial derivatives for every $h=1,...,n$
\begin{equation*}
    \dfrac{\partial f}{\partial x_h}:=\mathcal{I}\left(\partial_hF\right),\qquad \dfrac{\partial f}{\partial  x^c_h}:=\mathcal{I}\left(\overline{\partial}_hF\right).
\end{equation*}
In particular, $f\in\mathcal{S}\mathcal{R}(\Omega_D)$ if and only if $\frac{\partial f}{\partial  x^c_h}=0$ for every $h=1,...,n$.

Equip $\mathbb{R}^{2^n}$ with the product  $\otimes:\mathbb{R}^{2^n}\times\mathbb{R}^{2^n}\rightarrow\mathbb{R}^{2^n}$, defined on each basis element as
\begin{equation*}
    e_H\otimes e_K:=(-1)^{|H\cap K|}e_{H\Delta K},
\end{equation*}
and extended by linearity to all $\mathbb{R}^{2^n}$. This product induces a product on $\mathbb{R}_m\otimes\mathbb{R}^{2^n}$: given $a,b\in \mathbb{R}_m\otimes\mathbb{R}^{2^n}$, $a=\sum_{H\in\mathcal{P}(n)}e_Ha_H$ and $b=\sum_{K\in\mathcal{P}(n)}e_Kb_K$, with $a_H,b_K\in \mathbb{R}_m$, define
\begin{equation*}
    a\otimes b:= \sum_{H,K\in\mathcal{P}(n)}(e_H\otimes e_K)(a_Hb_K)= \sum_{H,K\in\mathcal{P}(n)}(-1)^{|H\cap K|}e_{H\Delta K}a_Hb_K,
\end{equation*}
where $a_Hb_K$ is the usual product of $\mathbb{R}_m$.
Furthermore, we can define a product between stem functions as the pointwise product induced by $\otimes$.

\begin{defn}
    Let $F,G\in Stem(D)$, define $(F\otimes G)(z):=F(z)\otimes G(z)$. More precisely, if $F=\sum_{H\in\mathcal{P}(n)}e_HF_H$ and $G=\sum_{K\in\mathcal{P}(n)}e_KG_K$,
\begin{equation*}
    (F\otimes G)(z):= \sum_{H,K\in\mathcal{P}(n)}(-1)^{|H\cap K|}e_{H\Delta K}F_H(z)G_K(z).
\end{equation*}
The advantage of this definition is that the product of two stem functions is again a stem function \cite[Lemma 2.34]{Several} and this allows to define a product on slice functions, too. Let $f,g\in\mathcal{S}(\Omega_D)$, with $f=\mathcal{I}(F)$ and $g=\mathcal{I}(G)$, then define the slice tensor product between $f$ and $g$ as 
    $f\odot g:=\mathcal{I}(F\otimes G)$.
In particular, $\mathcal{I}:(Stem(D),\otimes)\to(\mathcal{S}(\Omega_D),\odot)$ is an algebra isomorphism.
\end{defn}

\subsection{Partial slice regularity}
\label{Sezione proprietà sliceness rispetto a variabile}
The notion of partial sliceness was already given in \cite{Several}. The results of this section are taken from \cite{Parteteorica}, where they were proven for quaternionic valued slice functions. The very same proofs apply to the Clifford algebra setting.

Let $f:\Omega_D\subset (\mathbb{R}^{m+1})^n\rightarrow \mathbb{R}^{m+1}$ and $h=1,...,n$. For any $y=(y_1,...,y_n)\in\Omega_D$, let 
$$\Omega_{D,h}(y):=\{x\in \mathbb{R}_m\mid (y_1,...,y_{h-1},x,y_{h+1},...,y_n)\in\Omega_D\}\subset \mathbb{R}^{m+1}.$$
It is easy to see (\cite[\S 2]{Several}) that $\Omega_{D,h}(y)$ is a circular set of $ \mathbb{R}^{m+1}$, more precisely $\Omega_{D,h}(y)=\Omega_{D_h(z)}$, where
    $D_h(z):=\{w\in\mathbb{C}\mid (z_1,...,z_{h-1},w,z_{h+1},...,z_n)\in D\}$,
for $z=(z_1,...,z_n)$, such that $y\in\Omega_{\{z\}}$. 

\begin{defn}
We say that a slice function $f\in\mathcal{S}(\Omega_D)$ is \emph{slice} (resp. \emph{slice regular} or \emph{circular}) \emph{with respect to} $x_h$ if, $\forall y\in\Omega_D$, its restriction
\begin{equation*}
    f^y_h:\Omega_{D,h}(y)\rightarrow \mathbb{R}_m, \ f^y_h(x):=f(y_1,...,y_{h-1},x,y_{h+1},...,y_n)
\end{equation*}
is a one variable slice (resp. slice regular or circular) function, as defined in \S\ref{One variable theory}. 
We denote by $\mathcal{S}_h(\Omega_D)$ (resp. $\mathcal{S}\mathcal{R}_h(\Omega_D)$ or $\mathcal{S}_{c,h}(\Omega_D)$) the set of slice functions from $\Omega_D$ to $ \mathbb{R}_m$ that are slice (resp. slice regular or circular) with respect to $x_h$. For $H\in\mathcal{P}(n)$, define also
\begin{equation*}
    \mathcal{S}_H(\Omega_D):=\bigcap_{h\in H}\mathcal{S}_h(\Omega_D),\quad \mathcal{S}\mathcal{R}_H(\Omega_D):=\bigcap_{h\in H}\mathcal{S}\mathcal{R}_h(\Omega_D),\quad \mathcal{S}_{c,H}(\Omega_D):=\bigcap_{h\in H}\mathcal{S}_{c,h}(\Omega_D).
\end{equation*}
\end{defn}

Every slice function is, in particular, slice with respect to the first variable \cite[Proposition 2.23]{Several}, i.e. $\mathcal{S}_1(\Omega_D)=\mathcal{S}(\Omega_D)$, but in general $\mathcal{S}_h(\Omega_D)\subsetneq \mathcal{S}(\Omega_D)$. The same happens for slice regularity. The next proposition characterizes the sets $\mathcal{S}_H(\Omega_D),\mathcal{SR}_H(\Omega_D)$ and $\mathcal{S}_{c,H}(\Omega_D)$ for any $H\in\mathcal{P}(n)$ in terms of stem functions.

\begin{prop}
For any $H\in\mathcal{P}(n)$, it holds
\begin{equation*}
    \mathcal{S}_H(\Omega_D)=\left\{\mathcal{I}(F): F\in Stem(D), F=\sum_{K\subset H^c}e_KF_K+\sum_{h\in H}e_{\{h\}}\sum_{Q\subset\{ h+1,...,n\}\setminus H}e_QF_{\{h\}\cup Q}\right\},
\end{equation*}
\begin{equation}
    \label{Equazione caratterizzazione slice regular H}
\mathcal{S}\mathcal{R}_H(\Omega_D)=\mathcal{S}_H(\Omega_D)\cap\bigcap_{h\in H}\ker(\partial/\partial x_h^c)\subset\mathcal{S}_H(\Omega_D),
\end{equation}
\begin{equation}\label{equazione circolarita}
    \mathcal{S}_{c,H}(\Omega_D)=\left\{\mathcal{I}(F): F\in Stem(D), F=\sum_{K\subset H^c}e_KF_K\right\}\subset\mathcal{S}_H(\Omega_D).
\end{equation}
Moreover, for every $H\in\mathcal{P}(n)$, the set $\mathcal{S}_{c,H}(\Omega_D)$ is a real subalgebra of $(\mathcal{S}(\Omega_D),\odot)$. Any $f\in\mathcal{S}_{c,h}(\Omega_D)\cap\mathcal{S}\mathcal{R}_h(\Omega_D)$ is locally constant with respect to $x_h$. Finally, if $f\in\mathcal{S}\mathcal{R}(\Omega_D)$, $f\in\mathcal{S}_H(\Omega_D)$ if and only if $ f\in\mathcal{S}\mathcal{R}_H(\Omega_D).$
\end{prop}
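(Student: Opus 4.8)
The plan is to reduce everything to the one-variable theory of \S\ref{One variable theory}: by definition $\mathcal{S}_H(\Omega_D)=\bigcap_{h\in H}\mathcal{S}_h(\Omega_D)$, and likewise for $\mathcal{S}\mathcal{R}_H$ and $\mathcal{S}_{c,H}$, so it suffices to describe each of $\mathcal{S}_h$, $\mathcal{S}\mathcal{R}_h$ and $\mathcal{S}_{c,h}$ by a condition on the components $F_K$ of the inducing stem function and then intersect over $h\in H$. Fix $h$ and a point $y\in\Omega_{\{z\}}$, and freeze every variable but $x_h=\alpha_h+I\beta_h=\phi_I(w)$, so that $w=z_h$ and $I=J_h$ vary while $z_k,J_k$ ($k\neq h$) stay fixed. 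Writing each $K\ni h$ as $K=\{k_1<\dots<k_p\}$ with $h=k_j$, the formula $f(x)=\sum_K[J_K,F_K(z)]$ gives
\begin{equation*}
f^y_h(\alpha_h+I\beta_h)=\sum_{K:h\notin K}[J_K,F_K(z)]+\sum_{K:h\in K}L_K\,I\,R_K\,F_K(z),
\end{equation*}
where $L_K:=J_{k_1}\cdots J_{k_{j-1}}$ and $R_K:=J_{k_{j+1}}\cdots J_{k_p}$ are constants (empty products being $1$); the first sum is independent of $I$, and in the second the factor $I=J_h$ sits in the middle, moving to the left exactly when $h=\min K$ (i.e. $L_K=1$).

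To characterize $\mathcal{S}_h(\Omega_D)$ I would use the one-variable criterion coming from Proposition \ref{Prop representation formulas}: a function on a circular subset of $\mathbb{R}^{m+1}$ is a slice function precisely when its values on each sphere $\{\alpha_h+I\beta_h:I\in\mathbb{S}\}$ depend on $I$ in the left-affine way $b+Ic$, with the parity in $\beta_h$ dictated by the stem relation. The $I$-independent part above is harmless and the terms with $h=\min K$ are already of the form $I(R_KF_K)$; the obstruction is $\sum_{K:h\in K,\,h\neq\min K}L_K\,I\,R_K\,F_K$ with $L_K\neq1$. I would show that such sandwiched terms cannot be absorbed into a single left-multiple $I\tilde c$ unless the corresponding $F_K$ vanish: since $h\neq\min K$ forces at least one genuine imaginary unit into $L_K$, and the frozen units $J_{k}$ ($k<h$, $k\in K$) may be chosen freely in $\mathbb{S}$, one makes $L_K$ fail to commute with the varying $I$ and compares the two sides to force $F_K\equiv0$. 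This yields the single-$h$ condition ``$F_K=0$ whenever $h\in K$ and $h\neq\min K$''. The reverse inclusion is immediate: if $F$ satisfies it, every surviving term is either $I$-independent or of the shape $I(\cdot)$, and the stem relation \eqref{eq properies stem functions sev var} supplies the correct parity in $\beta_h$, so $f^y_h$ is induced by a one-variable stem function.

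The remaining two single-variable descriptions follow the same computation. Circularity in $x_h$ means $f^y_h$ is constant on each sphere, i.e. its $I$-linear part vanishes identically; the argument above then forces $F_K=0$ for \emph{every} $K\ni h$, giving $\mathcal{S}_{c,h}(\Omega_D)=\{\mathcal{I}(F):F_K=0\ \forall K\ni h\}$. For slice regularity in $x_h$ I would match one-variable regularity $\partial f^y_h/\partial x^c_h=0$ with the several-variable operator through $\partial f/\partial x^c_h=\mathcal{I}(\overline{\partial}_hF)$, obtaining $\mathcal{S}\mathcal{R}_h(\Omega_D)=\mathcal{S}_h(\Omega_D)\cap\ker(\partial/\partial x^c_h)$. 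Intersecting the three single-$h$ conditions over $h\in H$ produces exactly the stated supports: for $\mathcal{S}_H$ the admissible indices are those with $K\cap H\subseteq\{\min K\}$, which is precisely the displayed decomposition; for $\mathcal{S}_{c,H}$ the condition becomes $K\subset H^c$, i.e. \eqref{equazione circolarita}; and \eqref{Equazione caratterizzazione slice regular H} is the intersection of $\mathcal{S}_H$ with $\bigcap_{h\in H}\ker(\partial/\partial x^c_h)$.

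Finally, the three ``moreover'' assertions are short consequences. That $\mathcal{S}_{c,H}(\Omega_D)$ is a real subalgebra follows from $e_{H_1}\otimes e_{K_1}=\pm e_{H_1\Delta K_1}$: if $H_1,K_1\subset H^c$ then $H_1\Delta K_1\subset H^c$, so the description \eqref{equazione circolarita} is closed under $\otimes$ and real linear combinations, hence under $\odot$. If $f\in\mathcal{S}_{c,h}(\Omega_D)\cap\mathcal{S}\mathcal{R}_h(\Omega_D)$, each restriction $f^y_h$ is a one-variable circular and slice regular function, so its stem function has $F_1=0$ and holomorphic $F_0$; the Cauchy--Riemann equations then force $F_0$ locally constant, whence $f$ is locally constant in $x_h$. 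For the last equivalence, $\mathcal{S}\mathcal{R}_H\subseteq\mathcal{S}_H$ is built into \eqref{Equazione caratterizzazione slice regular H}; conversely, if $f\in\mathcal{S}\mathcal{R}(\Omega_D)$ then $\overline{\partial}_hF=0$ for every $h$, so any $f\in\mathcal{S}_H$ automatically lies in $\bigcap_{h\in H}\ker(\partial/\partial x^c_h)$ and hence in $\mathcal{S}\mathcal{R}_H$. The main obstacle is the forward inclusion of the $\mathcal{S}_h$ characterization: proving that a sandwiched term $L_K\,I\,R_K\,F_K$ with $L_K\neq1$ genuinely destroys sliceness unless $F_K\equiv0$. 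This is exactly where noncommutativity is essential, and the careful argument must exploit the freedom in the frozen imaginary units together with the independence of the contributions $I\mapsto L_K\,I\,R_K$ in order to isolate each offending component.
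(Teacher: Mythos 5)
Your reduction to one-variable restrictions is the natural strategy, and most of the surrounding material in your proposal is in order: the formula $f^y_h(\phi_I(w))=\sum_{h\notin K}[J_K,F_K]+\sum_{h\in K}L_K\,I\,R_K\,F_K$, the combinatorial check that intersecting the single-$h$ conditions over $h\in H$ (``$K\cap H\subseteq\{\min K\}$'') reproduces the displayed support, the circular case, the derivation of \eqref{Equazione caratterizzazione slice regular H}, and the three closing claims. Note, for comparison, that the paper itself gives no proof of this proposition: it cites \cite{Parteteorica} for the quaternionic case and asserts that the same proofs transfer to Clifford algebras. So you are attempting what the paper does not actually write out, which is legitimate, but it puts the entire burden on the one step you leave as a plan.

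That step --- showing that a sandwiched term $L_K\,I\,R_K\,F_K$ with $L_K\neq1$ cannot be absorbed into a left multiple $I\tilde c$ unless $F_K\equiv0$ --- is the whole content of the forward inclusion, and the mechanism you propose for it (``make $L_K$ fail to commute with the varying $I$'') cannot prove it, because noncommutation is not the obstruction. If $L_K$ \emph{anti}commutes with every element of $\spn(\mathbb{S})$, then $L_K\,I\,R_K\,F_K=I\left(-L_KR_KF_K\right)$, which is exactly of the allowed form even though $L_K$ commutes with no $I\in\mathbb{S}$. This genuinely happens in the paper's paravector setting: for $m=2$ one has $\mathbb{S}\subset\spn(e_1,e_2)$, every product $J_1J_2$ of two units lies in $\spn(1,e_1e_2)$, and $e_1e_2$ anticommutes with all of $\spn(e_1,e_2)$, so $J_1J_2I=IJ_2J_1$ for all $I,J_1,J_2\in\mathbb{S}$. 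Consequently the slice function induced by $e_{\{1,2,3\}}\beta_1\beta_2\beta_3$, namely $f(x)=\operatorname{Im}(x_1)\operatorname{Im}(x_2)\operatorname{Im}(x_3)$, is slice with respect to $x_3$ although $F_{\{1,2,3\}}\neq0$; hence for $m=2$, $n\geq3$ the characterization cannot be deduced from noncommutativity at all, and any correct proof must use $m\geq3$ (i.e.\ the dimension of $\spn(\mathbb{S})$, which is what the quaternionic proofs exploit, since there $\spn(\mathbb{S})$ is three-dimensional). For $m\geq3$ your outline can be completed, but it needs the two ingredients you only name: (i) isolate the individual $K$-terms by flipping $J_k\mapsto-J_k$ and averaging, noting that the right-hand side stays of the form $I\cdot\mathrm{const}$; (ii) for a fixed offending $K$, choose the frozen units so that $L_K$ reduces to $\pm J$ (odd number of factors) or to $\pm e_1e_2$ (even number, pairing equal units), and test the identity $L_KIa=Id$, $a=R_KF_K$, once with $I$ equal to a factor of $L_K$ and once with $I$ orthogonal to all factors (e.g.\ $I=e_3$ in the even case, which exists precisely because $m\geq3$); the two evaluations force $a=0$, hence $F_K=0$. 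As written, your proposal asserts the statement whose proof is the actual difficulty, supported by a heuristic that is false in low dimension.
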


\begin{oss}
\label{remark structure slice function wrt xh}
Let $f=\mathcal{I}(F)\in\mathcal{S}_H(\Omega_D)$, then for any $x\in\Omega_D$ with $x=(\phi_{J_1}\times...\times \phi_{J_n})(z)$
\begin{equation*}
    f(x)= \sum_{K\in H^c}\left[J_K,F_K(z)\right]+ \sum_{h\in H}J_h \sum_{Q\subset\{h+1,...,n\}\setminus H}\left[J_Q,F_{\{h\}\cup Q}(z)\right].
\end{equation*}
Moreover, for any $h\in H$ and any $y=(y_1,...,y_n)$, $f^y_h$ is a one-variable slice function, induced by the stem function $G^y_h$, with components
\begin{equation}
\label{Equazione componenti stem parziale}
G^y_{1,h}(w):= \sum_{K\in\mathcal{P}(n),h\notin K}[J_K,F_K(z',w,z'')]
    ,\qquad
G^y_{2,h}(w):= \sum_{Q\subset\{h+1,...,n\}\setminus H}[J_Q,F_{\{h\}\cup Q}(z',w,z'')],
    \end{equation}
    where $z=(z',z_h,z")$ and $y=(\phi_{J_1}\times...\times\phi_{J_n})(z)$.
\end{oss}

Functions of the form \eqref{equazione circolarita} were introduced in \cite{Several} as $H^c$-reduced slice functions, hence $f\in\mathcal{S}_{c,H}(\Omega_D)$ if and only if it is $H^c$-reduced.

For $h\in\{1,...,n\}$, define $\mathbb{R}_h:=\{(x_1,...,x_n)\mid x_h\in\mathbb{R}\}$ and for $H\in\mathcal{P}(n)$, $\mathbb{R}_H:=\bigcup_{h\in H}\mathbb{R}_h$.
\begin{defn}
    Let $F:D\subset\mathbb{C}^n\rightarrow \mathbb{R}_m\otimes\mathbb{R}^{2^n}$ be a stem function. Define for $h=1,...,n$ and for $H=\{h_1,...,h_p\}\in\mathcal{P}(n)$,
\begin{equation*}
\begin{split}
    F^\circ_h(z):= \sum_{K\in\mathcal{P}(n),h\notin K}e_KF_K(z),\qquad
F^\circ_H(z):=\sum_{K\subset H^c}e_KF_K(z)=\left(\dots(F^\circ_{h_1})^\circ_{h_2}\dots\right)^\circ_{h_p}(z)
\end{split}
    \end{equation*}
    and 
    \begin{align}
F'_h(z):&=\sum_{K\in\mathcal{P}(n),h\notin K}e_K\beta_h^{-1}F_{K\cup\{h\}}(z),&\text{if }z\in D\setminus\mathbb{R}_h\\
F'_H(z):&=\sum_{K\subset H^c}e_K\beta_H^{-1}F_{K\cup H}(z)=\left(\dots(F'_{h_1})'_{h_2}\dots\right)'_{h_p}(z), &\text{if }z\in D\setminus\mathbb{R}_H,
    \end{align}
    where $z=(z_1,...,z_n)$ with $z_j=\alpha_j+i\beta_j$
    and $\beta_H=\prod_{h\in H}\beta_h$.
    
    \end{defn}

    

Since, for every $H\in\mathcal{P}(n)$, $F^\circ_H$ and $F'_H$ are well defined stem functions we can make the following

\begin{defn}
    Let $f=\mathcal{I}(F)\in\mathcal{S}(\Omega_D)$. For $h\in\{1,...,n\}$, we define its \emph{spherical $x_h$-value and $x_h$-derivative} rispectively as
       $f^\circ_{s, h}:=\mathcal{I}(F^\circ_{h})$ and  $f'_{s, h}:=\mathcal{I}(F'_h)$.
Analogously, for $H\in\mathcal{P}(n)$, define
$f^\circ_{s,H}:=\mathcal{I}(F^\circ_H)$ and  $f'_{s,H}:=\mathcal{I}(F'_H)$.
Note that $f^\circ_{s,H}\in\mathcal{S}(\Omega_D)$, while $f'_{s,H}\in\mathcal{S}(\Omega_{D_H})$, where $\Omega_{D_H}:=\Omega_D\setminus\mathbb{R}_H$.
\end{defn}
For any $f\in\mathcal{S}(\Omega_D)$ it holds $f^\circ_{s, h}(x)=\dfrac{1}{2}\left(f(x)+f\left(\overline{x}^h\right)\right)=(f^x_h)^\circ_s(x_h)$, while if $f\in\mathcal{S}_h(\Omega_D)$, then 
\begin{equation}\label{equazione derivata sferica parziale coincide con unidimensionale}
    f'_{s, h}(x)=\left[2\operatorname{Im}(x_h)\right]^{-1}(f(x)-f(\overline{x}^h))=(f^x_h)'_{s}(x_h)
\end{equation}
for any $x\in\Omega_D\setminus\mathbb{R}$.





The next proposition presents some properties of partial spherical values and derivatives peculiar of the several variables setting.
\begin{prop}
\label{proposizione proprieta derivata sferica}
Let $f,g\in\mathcal{S}(\Omega_D)$, $h\in\{1,...,n\}$ and $H\in\mathcal{P}(n)$, with $p=\min H^c$ if $H\neq\{1,...,n\}$. Then
\begin{enumerate}
    \item $f^\circ_{s,H}\in\mathcal{S}_{c,H}(\Omega_D)\cap\mathcal{S}_p(\Omega_D)$ and $f'_{s,H}\in\mathcal{S}_{c,H}(\Omega_{D_H})\cap\mathcal{S}_p(\Omega_{D_H})$;
    \item if $f\in\mathcal{S}_h(\Omega_{D})$, $f'_{s, h}\in\mathcal{S}_{h+1}(\Omega_{D_H})\cap\mathcal{S}_{c,\{1,...,h\}}(\Omega_{D_H})$;
    \item if $f\in\mathcal{S}_{c,h}(\Omega_D)$, $f^\circ_{s,h}=f$ and $f'_{s,h}=0$;
    \item if $h\in H$, $H\cap\{1,...,h-1\}\neq\emptyset$ and $f\in\mathcal{S}_h(\Omega_D)$, then $f'_{s,H}=0$;
    \item $(f^\circ_{s,h})^\circ_{s,h}=f^\circ_{s,h}$ and $(f'_{s,h})'_{s,h}=0$;
    \item if $f\in\ker(\partial/\partial x_t^c)$ for some $t=1,...,n$, then $f^\circ_{s,h},f'_{s, h}\in\ker(\partial/\partial x_t^c)$, $\forall h\neq t$;
    \item
$f=f^\circ_{s, h}+\operatorname{Im}(x_h)\odot f'_{s, h}$;
    \item
$(f\odot g)'_{s, h}=f'_{s, h}\odot g^\circ_{s, h}+f^\circ_{s, h}\odot g'_{s, h}$.
\end{enumerate}
\end{prop}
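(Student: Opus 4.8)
The plan is to transfer every item to the level of stem functions through the algebra isomorphism $\mathcal{I}:(Stem(D),\otimes)\to(\mathcal{S}(\Omega_D),\odot)$, so that $f^\circ_{s,H}=\mathcal{I}(F^\circ_H)$, $f'_{s,H}=\mathcal{I}(F'_H)$ and $f\odot g=\mathcal{I}(F\otimes G)$, and then read off the conclusions from the explicit expressions of $F^\circ_H,F'_H$ together with the stem-function characterizations \eqref{equazione circolarita} and of $\mathcal{S}_H(\Omega_D)$. I would organize the eight statements into three groups: the membership/vanishing statements (1)--(5), the holomorphy-preservation statement (6), and the two algebraic identities (7)--(8), which I expect to be the hardest.

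For (1), the stem functions $F^\circ_H=\sum_{K\subset H^c}e_KF_K$ and $F'_H=\sum_{K\subset H^c}e_K\beta_H^{-1}F_{K\cup H}$ are supported on indices $K\subset H^c$, which is exactly the form \eqref{equazione circolarita} characterizing $\mathcal{S}_{c,H}$; membership in $\mathcal{S}_p$ with $p=\min H^c$ (when $H\neq\{1,\dots,n\}$) follows because every such $K$ with $p\in K$ satisfies $K\setminus\{p\}\subset\{p+1,\dots,n\}\setminus H$, which is precisely the constraint in the characterization of $\mathcal{S}_p$. For (3) I would observe that if $f\in\mathcal{S}_{c,h}$ then $F$ has no component with $h\in K$, so $F^\circ_h=F$ and $F'_h=0$; and since $F^\circ_h,F'_h$ are always supported on $\{K:h\notin K\}$, the induced functions lie in $\mathcal{S}_{c,h}$, whence (5) is (3) applied to $f^\circ_{s,h}$ and $f'_{s,h}$. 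Items (2) and (4) use the hypothesis $f\in\mathcal{S}_h$, which by the $\mathcal{S}_h$-characterization forces $F_L=0$ whenever $h\in L$ and $L\cap\{1,\dots,h-1\}\neq\emptyset$; taking $L=K\cup\{h\}$ (resp. $L=K\cup H$) kills exactly the components that would obstruct $f'_{s,h}\in\mathcal{S}_{c,\{1,\dots,h\}}$ (resp. force $f'_{s,H}\neq 0$), and the inclusion $\mathcal{S}_{c,\{1,\dots,h\}}\subset\mathcal{S}_{h+1}$ again rests on $h+1=\min\{h+1,\dots,n\}$.

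For (6) I would use that $f\in\ker(\partial/\partial x^c_t)$ means $F$ is $t$-holomorphic, i.e. it satisfies the Cauchy--Riemann system \eqref{Cauchy-Riemann equations} in the variable $t$, and verify the same system for $F^\circ_h$ and $F'_h$ when $h\neq t$. Here the hypothesis $h\neq t$ is essential: passing from $F$ to $F^\circ_h$ only zeroes out the components with $h\in K$, which occur in $t$-compatible pairs $K,K\cup\{t\}$ since $t\neq h$, while passing to $F'_h$ shifts indices by $\{h\}$ and multiplies by $\beta_h^{-1}$, a factor independent of $\alpha_t,\beta_t$; in both cases the $t$-Cauchy--Riemann equations for the new components reduce termwise to those for $F$.

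The identities (7) and (8) are the core of the proposition and I expect (8) to be the main obstacle. My approach is to exploit the $\mathbb{Z}/2$-grading $\mathbb{R}^{2^n}=V_0\oplus V_1$, where $V_0=\spn\{e_K:h\notin K\}$ and $V_1=\spn\{e_K:h\in K\}$: from $e_H\otimes e_K=(-1)^{|H\cap K|}e_{H\Delta K}$ one checks that $h\in H\Delta K$ iff $h$ lies in exactly one of $H,K$, so $\otimes$ is graded ($V_0\otimes V_0,\,V_1\otimes V_1\subset V_0$ and $V_0\otimes V_1,\,V_1\otimes V_0\subset V_1$). Writing $F^\circ_h$ and $P^1_hF$ for the two graded parts of $F$, a direct computation gives the key relation $P^1_hF=\beta_h\,e_{\{h\}}\otimes F'_h$, equivalently $F'_h=-\beta_h^{-1}e_{\{h\}}\otimes P^1_hF$, using $e_{\{h\}}\otimes e_{\{h\}}=-1$. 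Since $\beta_h\,e_{\{h\}}$ is the stem function of $\operatorname{Im}(x_h)$, the decomposition $F=F^\circ_h+\beta_h\,e_{\{h\}}\otimes F'_h$ is exactly (7). For (8), the grading yields $P^1_h(F\otimes G)=F^\circ_h\otimes P^1_hG+P^1_hF\otimes G^\circ_h$; applying $-\beta_h^{-1}e_{\{h\}}\otimes(-)$, substituting the key relation, and using that $e_{\{h\}}$ commutes with $V_0$-valued stem functions under $\otimes$ (the sign $(-1)^{|K\cap\{h\}|}$ vanishes for $h\notin K$), the two terms collapse to $F^\circ_h\otimes G'_h+F'_h\otimes G^\circ_h$, which is (8) after applying $\mathcal{I}$. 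The only delicate points are tracking the associativity of $\otimes$ and the non-commutativity of the $\mathbb{R}_m$-coefficients, but these cause no trouble because $e_{\{h\}}$ carries no $\mathbb{R}_m$-coefficient.
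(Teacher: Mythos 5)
Your proof is correct, but there is no in-paper proof to compare it with: the paper never proves Proposition \ref{proposizione proprieta derivata sferica}. The subsection containing it opens by declaring that its results ``are taken from \cite{Parteteorica}, where they were proven for quaternionic valued slice functions'', and that the very same proofs apply to the Clifford setting. Your argument thus supplies what the paper leaves to a citation, and it does so by what is surely the intended method: transferring everything to stem functions via $\mathcal{I}$ and reading the claims off the supports of $F^\circ_H$ and $F'_H$, the characterizations of $\mathcal{S}_H(\Omega_D)$ and \eqref{equazione circolarita}, and the Cauchy--Riemann system \eqref{Cauchy-Riemann equations}. Items (1)--(5) are exactly the support bookkeeping you describe, the key observation (which you state) being that $K\subset H^c$ forces every element of $K$ to be $\geq p=\min H^c$; item (6) works termwise because $h\neq t$ keeps the pairs $(F_K,F_{K\cup\{t\}})$ together (both killed or both kept) and makes the factor $\beta_h^{-1}$ constant in $(\alpha_t,\beta_t)$. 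For (7)--(8), your $\mathbb{Z}/2$-grading of $\mathbb{R}_m\otimes\mathbb{R}^{2^n}$ by the presence of $h$, together with the relation $P^1_hF=\beta_h\,e_{\{h\}}\otimes F'_h$ and the fact that $e_{\{h\}}$ (having trivial $\mathbb{R}_m$-coefficient) $\otimes$-commutes with $V_0$-valued stem functions, is a clean packaging that makes the non-commutativity of the $\mathbb{R}_m$-coefficients visibly harmless; a direct expansion of the components $(F\otimes G)_{K\cup\{h\}}$ would also work but is messier, and your associativity check for $\otimes$ is legitimate since the paper restricts to associative Clifford algebras. Two points you should make explicit in a final write-up: identities (7) and (8) hold on $\Omega_D\setminus\mathbb{R}_h$, where $f'_{s,h}$ is defined (the paper's statement is equally silent on this), and the reduction of membership statements to statements about the inducing stem function presupposes the injectivity of $\mathcal{I}$ --- but that is already built into the characterizations of $\mathcal{S}_H(\Omega_D)$, $\mathcal{S}\mathcal{R}_H(\Omega_D)$ and $\mathcal{S}_{c,H}(\Omega_D)$ that the paper states, so you are entitled to use it.
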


We stress that the terms spherical value and spherical derivatives have been already used in \cite[\S 2.3]{Several} in the context of slice functions of several quaternionic variables, but they refer to different objects. With respect to our definition, spherical values and derivatives are more related to the truncated spherical derivatives.

\begin{defn}[Definition 2.24,\cite{Several}]
    Let $\Omega_D\subset(\mathbb{R}^{m+1})^n$ and let $f\in\mathcal{S}(\Omega_D)$. For any $h=1,\dots, n$ and $\epsilon:\{1,\dots,h\}\to\{0,1\}$, define the truncated spherical $\epsilon$-derivative of $f$ of order $h$, $\mathcal{D}_\epsilon^h(f):\Omega_D\setminus\mathbb{R}_{\epsilon^{-1}(1)}\to \mathbb{R}_m$ as
$\mathcal{D}_\epsilon^h(f)\coloneqq\mathcal{D}_{x_h}^{\epsilon(h)}\cdots\mathcal{D}_{x_1}^{\epsilon(1)}(f)$,
    with
$\mathcal{D}_{x_l}^{0}(f)=f^\circ_{s,l},$ and $ \mathcal{D}_{x_l}^{1}(f)=f'_{s,l}$.
    Alternatively, for given $H\in\mathcal{P}(h)$, we call the truncated spherical $H$-derivative of $f$ the truncated spherical $\chi_H$-derivative of $f$, where $\chi_H$ is the characteristic function of $H$, i.e. $\chi_H(j)=0$ if $j\notin H$ and $\chi_H(j)=1$ if $j\in H$. Namely, we set 
$\mathcal{D}_H^h(f)=\mathcal{D}_{\chi_H}^h(f)$.
\end{defn}

    For any given $h=1,\dots,n$ and any $\epsilon:\{1,\dots,h\}\to\{0,1\}$, denote with $H=\epsilon^{-1}(1)$ and $K=\epsilon^{-1}(0)=\{1,\dots,h\}\setminus H$. Then it holds
$\mathcal{D}_\epsilon^h(f)=\mathcal{D}_H^h(f)=(f'_{s,H})^\circ_{s,K}$.
    Explicitly, if $f=\mathcal{I}(F)$, with $F=\sum_{K\in\mathcal{P}(n)}e_KF_K$, for every $h=1,\dots,n$ and any $H\in\mathcal{P}(H)$, it holds $\mathcal{D}^h_H=\mathcal{I}(D^h_H)$, with
    \begin{equation*}
D^h_H=\beta_H^{-1}\sum_{K\subset\{h+1,\dots,n\}}e_KF_{K\cup H}.
    \end{equation*}

From \eqref{Equazione caratterizzazione slice regular H} and Proposition \ref{proposizione proprieta derivata sferica} (6) it is easy to see that if $f\in\mathcal{S}\mathcal{R}(\Omega_D)$, then $\mathcal{D}^h_K(f)\in\mathcal{S}\mathcal{R}_{h+1}(\Omega_D)$, for any $h=1,\dots,n$ and $H\in\mathcal{P}(h)$. Next Theorem tells that also the converse holds true.
\begin{thm}[One variable characterization of slice regularity, Theorem 3.23 \cite{Several}]
\label{thm one variable characterization}
    Let $\Omega_D\subset(\mathbb{R}^{m+1})^n$ and let $f\in\mathcal{S}(\Omega_D)$. Then $f\in\mathcal{S}\mathcal{R}(\Omega_D)$ if and only if $f\in\mathcal{S}\mathcal{R}_1(\Omega_D)$ and $\mathcal{D}_{\chi_K}^h(f)\in\mathcal{S}\mathcal{R}_{h+1}(\Omega_D)$, for any $h=1,\dots,n-1$ and any $K\in\mathcal{P}(h)$.
\end{thm}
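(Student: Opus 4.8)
The plan is to reduce everything to the level of stem functions and to show that $F$, the stem function inducing $f$, is $t$-holomorphic for every $t=1,\dots,n$; by the Cauchy--Riemann characterisation \eqref{Cauchy-Riemann equations} this is exactly holomorphicity of $F$, i.e. slice regularity of $f=\mathcal{I}(F)$. The forward implication is the one already sketched before the statement: if $f\in\mathcal{S}\mathcal{R}(\Omega_D)$ then $f\in\ker(\partial/\partial x_1^c)$ and $f\in\mathcal{S}_1(\Omega_D)=\mathcal{S}(\Omega_D)$, whence $f\in\mathcal{S}\mathcal{R}_1(\Omega_D)$, while Proposition \ref{proposizione proprieta derivata sferica}(6) propagates membership in $\ker(\partial/\partial x_{h+1}^c)$ through the spherical values and derivatives (all in variables $\le h\neq h+1$) that build $\mathcal{D}^h_{\chi_K}$, giving $\mathcal{D}^h_{\chi_K}(f)\in\mathcal{S}\mathcal{R}_{h+1}(\Omega_D)$. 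So the substance is the converse.

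For the converse, I would translate each hypothesis into a family of partial Cauchy--Riemann equations for the components $F_K$ of $F$. From $f\in\mathcal{S}\mathcal{R}_1(\Omega_D)$ and \eqref{Equazione caratterizzazione slice regular H} we get $f\in\ker(\partial/\partial x_1^c)$, i.e. $\overline{\partial}_1F=0$, which is precisely $1$-holomorphicity, namely \eqref{Cauchy-Riemann equations} at $t=1$. For $t=h+1$ with $h\in\{1,\dots,n-1\}$, fix $H\in\mathcal{P}(h)$ and set $g=\mathcal{D}^h_{\chi_H}(f)=\mathcal{I}(G)$ with $G=D^h_H=\beta_H^{-1}\sum_{L\subset\{h+1,\dots,n\}}e_LF_{L\cup H}$. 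The hypothesis $g\in\mathcal{S}\mathcal{R}_{h+1}(\Omega_D)$ gives, via \eqref{Equazione caratterizzazione slice regular H}, $g\in\ker(\partial/\partial x_{h+1}^c)$, hence $\overline{\partial}_{h+1}G=0$; writing this out through \eqref{Cauchy-Riemann equations} at $t=h+1$ for the components $G_L=\beta_H^{-1}F_{L\cup H}$ (with $L\subset\{h+2,\dots,n\}$), and noting that $\beta_H$ is independent of $\alpha_{h+1},\beta_{h+1}$ so the factor $\beta_H^{-1}$ cancels, yields
\begin{equation*}
\frac{\partial F_{L\cup H}}{\partial\alpha_{h+1}}=\frac{\partial F_{L\cup H\cup\{h+1\}}}{\partial\beta_{h+1}},\qquad
\frac{\partial F_{L\cup H}}{\partial\beta_{h+1}}=-\frac{\partial F_{L\cup H\cup\{h+1\}}}{\partial\alpha_{h+1}}.
\end{equation*}

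The key combinatorial observation is then that, letting $H$ range over all of $\mathcal{P}(h)$ and $L$ over all subsets of $\{h+2,\dots,n\}$, the set $M:=L\cup H$ ranges over every subset of $\{1,\dots,n\}$ not containing $h+1$, since $M=(M\cap\{1,\dots,h\})\cup(M\cap\{h+2,\dots,n\})$ recovers the unique such decomposition. Hence the hypotheses at the fixed level $h$, taken together over $H\in\mathcal{P}(h)$, furnish \eqref{Cauchy-Riemann equations} at $t=h+1$ for every $M$ with $h+1\notin M$, i.e. $(h+1)$-holomorphicity of $F$. Running $h$ from $1$ to $n-1$ covers $t=2,\dots,n$, and with $t=1$ this shows $F$ is $t$-holomorphic for all $t$, hence holomorphic, so $f\in\mathcal{S}\mathcal{R}(\Omega_D)$.

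I expect the only real subtlety — more a point to get right than a serious obstacle — to be twofold. First, one must verify the covering claim above, that the hypotheses at level $h$ exhaust all components needed for $(h+1)$-holomorphicity; this is the elementary decomposition just noted. Second, the derived identities hold a priori only on $D\setminus\mathbb{R}_H=\{z\in D:\beta_h\neq0\ \forall h\in H\}$, where $G=D^h_H$ is defined; since this set is dense in $D$ (its complement lies in finitely many hyperplanes $\{\beta_h=0\}$) and the $F_K$ are of class $\mathcal{C}^1$, the equations extend by continuity to all of $D$. Beyond this, the argument is bookkeeping matching the components of $D^h_H$ with those of $F$, and no induction on $n$ is required, each $t$ being treated independently.
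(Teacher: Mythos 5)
Your argument is correct, but there is nothing in this paper to compare it against: the theorem is quoted as Theorem 3.23 of \cite{Several} and is not proved here. Only the forward implication is sketched (in the sentence immediately preceding the statement), by exactly the argument you give, namely \eqref{Equazione caratterizzazione slice regular H} together with Proposition \ref{proposizione proprieta derivata sferica}(6). Your converse therefore serves as a self-contained replacement for the citation, and it holds up. The chain of reductions is sound: $\mathcal{D}^h_{\chi_H}(f)\in\mathcal{S}\mathcal{R}_{h+1}$ yields, through \eqref{Equazione caratterizzazione slice regular H} and the injectivity of $\mathcal{I}$, that $\overline{\partial}_{h+1}D^h_H=0$; since $D^h_H=\beta_H^{-1}\sum_{L\subset\{h+1,\dots,n\}}e_LF_{L\cup H}$ with $\beta_H$ independent of $(\alpha_{h+1},\beta_{h+1})$, and since the components of $D^h_H$ indexed by sets meeting $\{1,\dots,h\}$ vanish identically (so their Cauchy--Riemann equations are vacuous), this is precisely \eqref{Cauchy-Riemann equations} at $t=h+1$ for the pairs $\left(F_{L\cup H},F_{L\cup H\cup\{h+1\}}\right)$ on $D\setminus\mathbb{R}_H$. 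Your decomposition $M=\left(M\cap\{1,\dots,h\}\right)\cup\left(M\cap\{h+2,\dots,n\}\right)$ does exhaust every index $M$ with $h+1\notin M$ as $H$ runs over $\mathcal{P}(h)$, so the hypotheses at level $h$ give full $(h+1)$-holomorphicity, and with $1$-holomorphicity from $f\in\mathcal{S}\mathcal{R}_1(\Omega_D)$ this proves holomorphicity of $F$, with no induction needed. This is, in substance, the same stem-component bookkeeping that underlies the original proof in \cite{Several}.

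One caveat you should state explicitly: the final density-and-continuity extension from $D\setminus\mathbb{R}_H$ to $D$ needs the partial derivatives of the $F_K$ to exist and be continuous on all of $D$, i.e. $F\in\mathcal{C}^1(D)$, equivalently $f\in\mathcal{S}^1(\Omega_D)$. This does not follow from the hypothesis $f\in\mathcal{S}(\Omega_D)$ as literally written; it is implicit in the statement, since the operators $\partial/\partial x_h^c$ appearing in \eqref{Equazione caratterizzazione slice regular H} are only defined on $\mathcal{S}^1(\Omega_D)$, but your proof should record this assumption where it invokes continuity of both sides of the Cauchy--Riemann equations.
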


\section{Polyharmonicity in slice analysis}
\label{Sectionharmonicity}
Most of the harmonic properties presented for one variable slice regular functions are known (see \cite{Harmonicity}). However, we provide some new explicit formulas for the iterated Laplacian applied to a slice regular function and its spherical derivatives in both one and several Clifford variables. We then prove these results in several variables.

\begin{imposs}
    For the rest of the paper we assume $m$ an odd positive integer and we set $\gamma_m\coloneqq\frac{m-1}{2}$.
\end{imposs}
\subsection{Polyharmonicity of one Clifford variable slice regular functions}
In this subsection we assume $\Omega_D\subset\mathbb{R}^{m+1}$ open circular set.
 The next formulas are a slight variation of \cite[Theorem 4.1]{Harmonicity}.

\begin{prop}
\label{Prop potenza laplaciano derivata sferica}
    Let $f\in\mathcal{SR}(\Omega_D)$. Then, for any $k=1,2,\dots$, it holds
    \begin{equation}
    \label{eq 1 potenza laplaciano sferica}
    \Delta_{m+1}^kf'_s(x)=(m-3)\cdot...\cdot(m-2k-1)\sum_{j=1}^{k}a_j^{(k)}\beta^{j-2k}\partial_\beta^jf'_s(x),
\end{equation}
or equivalently, if $f=\mathcal{I}(F_0+e_1F_1)$,
\begin{equation}
\label{eq 2 potenza laplaciano sferica}
    \Delta_{m+1}^kf'_s(x)=(m-3)\cdot...\cdot(m-2k-1)\sum_{j=1}^{k+1}a_j^{(k+1)}\beta^{j-2k-2}\partial_\beta^{j-1}F_1(\operatorname{Re}(x),|\operatorname{Im}(x)|),
\end{equation}
where $\Delta_{m+1}=\partial^2_{x_0}+\sum_{j=1}^m\partial^2_{x_j}$ is the Laplacian of $\mathbb{R}^{m+1}$, $\beta=\sqrt{x_1^2+\dots+x_m^2}$ and
\begin{equation}
\label{eq definition coefficients ajk}
a_j^{(k)}:=\frac{(2k-j-1)!}{(j-1)!\,(k-j)!\,(-2)^{k-j}}.
\end{equation}
In particular, $f'_s$ is $\gamma_m$-polyharmonic, i.e.
$\Delta^{\gamma_m}_{m+1}f'_s=0$.
\end{prop}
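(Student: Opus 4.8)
The plan is to exploit the fact that $f'_s$ is circular, hence a function of $x_0=\operatorname{Re}(x)$ and $\beta=|\operatorname{Im}(x)|$ alone, so that the Euclidean Laplacian restricts to the radial operator $M_\mu:=\partial_{x_0}^2+\partial_\beta^2+\frac{\mu-1}{\beta}\partial_\beta$, with $\Delta_{m+1}=M_m$ on such functions. First I would record that slice regularity forces harmonicity of the stem component: the Cauchy--Riemann equations $\partial_\alpha F_0=\partial_\beta F_1$, $\partial_\beta F_0=-\partial_\alpha F_1$ give, upon cross-differentiating, $\partial_\alpha^2F_1+\partial_\beta^2F_1=0$. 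Writing $f'_s=\mathcal{I}(F_1/\operatorname{Im}(z))$ as $G:=\beta^{-1}F_1$ and using this harmonicity, a direct computation of $M_\mu(\beta^{-1}F_1)$ (in which the two-dimensional Laplacian of $F_1$ cancels) collapses to the clean identity
\[ M_\mu f'_s=(\mu-3)\,T f'_s,\qquad T:=\tfrac{1}{\beta}\partial_\beta, \]
valid for every real parameter $\mu$; in particular $\Delta_{m+1}f'_s=(m-3)Tf'_s$, which is the case $k=1$ of \eqref{eq 1 potenza laplaciano sferica}.

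Next I would prove the dimension-shift commutation relation $M_\mu T=T M_{\mu-2}$ as operators on functions of $(x_0,\beta)$, a short explicit computation (the $\partial_{x_0}^2$ term commutes with $T$ trivially, and the radial part is the classical identity $\Delta_{d+2}(r^{-1}\partial_r g)=r^{-1}\partial_r(\Delta_d g)$ for radial $g$). Combining the two facts, induction on $k$ yields
\[ \Delta_{m+1}^k f'_s=M_m^k f'_s=(m-3)(m-5)\cdots(m-2k-1)\,T^k f'_s, \]
since, writing $M_m^{k}f'_s=c_k T^k f'_s$, one pushes the $T$'s to the left via $M_m T^k=T^k M_{m-2k}$ and then applies the key identity in the lowered dimension, $M_{m-2k}f'_s=(m-2k-3)Tf'_s$, so that $c_{k+1}=c_k\,(m-2k-3)=c_k\,(m-2(k+1)-1)$. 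The $\gamma_m$-polyharmonicity is then immediate: for $k=\gamma_m=(m-1)/2$ the last factor is $m-2\gamma_m-1=0$, so the whole product vanishes and $\Delta_{m+1}^{\gamma_m}f'_s=0$.

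It remains to turn $(m-3)\cdots(m-2k-1)\,T^k f'_s$ into the stated sums. Applying $T=\beta^{-1}\partial_\beta$ to the ansatz $T^k=\sum_{j=1}^k a_j^{(k)}\beta^{j-2k}\partial_\beta^j$ gives the recursion $a_j^{(k+1)}=(j-2k)\,a_j^{(k)}+a_{j-1}^{(k)}$, and I would verify by induction that the closed form \eqref{eq definition coefficients ajk} satisfies it (with the conventions $a_0^{(k)}=a_{k+1}^{(k)}=0$); this proves \eqref{eq 1 potenza laplaciano sferica}. Formula \eqref{eq 2 potenza laplaciano sferica} follows by the same mechanism applied to $T^k(\beta^{-1}F_1)$: writing it as $\sum_{j=1}^{k+1}a_j^{(k+1)}\beta^{j-2k-2}\partial_\beta^{j-1}F_1$ and applying $T$, the coefficient bookkeeping reproduces exactly the recursion above with $k$ replaced by $k+1$, so the identity propagates from the base case $\beta^{-1}F_1=a_1^{(1)}\beta^{-1}F_1$. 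I expect this coefficient bookkeeping---verifying the closed form against the recursion and managing the index shift between the two displayed formulas---to be the main, purely computational, obstacle; the conceptual content is entirely contained in the harmonicity identity and the dimension shift.
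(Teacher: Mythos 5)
Your proposal is correct, and all the computations it sketches check out: the key identity $M_\mu f'_s=(\mu-3)Tf'_s$ (which indeed holds for every real $\mu$ once $\Delta_2F_1=0$ kills the second-order block), the intertwining $M_\mu T=TM_{\mu-2}$, the induction giving $\Delta_{m+1}^kf'_s=(m-3)\cdots(m-2k-1)T^kf'_s$, and the three-term recursion $a_j^{(k+1)}=(j-2k)a_j^{(k)}+a_{j-1}^{(k)}$, which the closed form \eqref{eq definition coefficients ajk} does satisfy. However, your route is genuinely different from the paper's. The paper does not prove \eqref{eq 1 potenza laplaciano sferica} at all: it imports it wholesale from an external reference (Proposition 4.2 of \cite{SCHCF}), and its only original content is the passage from \eqref{eq 1 potenza laplaciano sferica} to \eqref{eq 2 potenza laplaciano sferica}, done by expanding $\partial_\beta^j(\beta^{-1}F_1)$ via a Leibniz-type formula, exchanging the order of summation, and invoking the combinatorial identity $a_{j+1}^{(k+1)}=\sum_{l=j}^k(-1)^{l-j}\frac{l!}{j!}a_l^{(k)}$ of Lemma \ref{lem:coefficienti}, whose proof is a telescoping-sum argument. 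Your argument replaces both ingredients: \eqref{eq 1 potenza laplaciano sferica} is derived from scratch through the radial reduction and the dimension-shift structure, and \eqref{eq 2 potenza laplaciano sferica} is obtained by running the same simple recursion directly on $T^k(\beta^{-1}F_1)$, so Lemma \ref{lem:coefficienti} and its telescoping proof are never needed. What you gain is a self-contained and conceptually transparent proof in which the factor $(m-3)\cdots(m-2k-1)$ and hence the $\gamma_m$-polyharmonicity appear structurally (the last factor vanishes exactly at $k=\gamma_m$), and in which the operator identity $\Delta_{m+1}^k=(\text{const})\cdot T^k$ on spherical derivatives is made explicit; incidentally, your recursion is the same one the paper uses later in the proof of Proposition \ref{prop method for constructing polyharmonic functions}, so your approach also unifies the two computations. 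What the paper's route buys is brevity on the page, at the cost of an external dependency and a less illuminating combinatorial lemma.
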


Before giving the proof of Proposition \ref{Prop potenza laplaciano derivata sferica}, we need some preliminary results. 

\begin{lem}\label{lem:coefficienti}
    The coefficients $a_j^{(k)}$ of the previous Proposition satisfy the following relation
   for any $j=0,\dots ,k$
\begin{equation}
\label{eq sum recursion coefficient}
    a_{j+1}^{(k+1)}=\sum_{l=j}^k(-1)^{l-j}\frac{l!}{j!}a_l^{(k)}.
\end{equation}
\end{lem}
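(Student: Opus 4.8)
The statement is a purely algebraic identity between the explicitly given numbers $a_j^{(k)}$, so the plan is to substitute the closed form \eqref{eq definition coefficients ajk} into both sides and reduce everything to a single binomial identity, which I then verify by induction. I first observe that when $j=0$ the term $l=0$ on the right-hand side involves $a_0^{(k)}$, whose denominator contains $(-1)!$; adopting the usual convention $1/(-1)!=0$ (consistent with the poles of $\Gamma$) this term vanishes, so in every case the sum may be taken to start effectively at $l=\max\{j,1\}$ and I may freely use $l!/(l-1)!=l$.

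Substituting the definition, the left-hand side is
\[
a_{j+1}^{(k+1)}=\frac{(2k-j)!}{j!\,(k-j)!\,(-2)^{k-j}},
\]
while, after writing $l!/\bigl((l-1)!\,j!\bigr)=l/j!$, the right-hand side becomes $\frac{1}{j!}\sum_{l=j}^{k}(-1)^{l-j}\,l\,(2k-l-1)!\big/\bigl((k-l)!\,(-2)^{k-l}\bigr)$. Clearing the common factor $1/j!$, multiplying by $(-2)^{k}$, and collecting signs via $(-1)^{l-j}(-2)^{l}=(-1)^{j}2^{l}$ and $(-2)^{j}=(-1)^{j}2^{j}$, the identity becomes equivalent (using $(2k-j)!/(k-j)!=k!\binom{2k-j}{k}$ and $(2k-l-1)!/(k-l)!=(k-1)!\binom{2k-l-1}{k-1}$, then cancelling $(k-1)!$) to
\[
k\binom{2k-j}{k}2^{j}=\sum_{l=j}^{k}l\binom{2k-l-1}{k-1}2^{l}.
\]

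It remains to prove this last identity, which I do by downward induction on $j$ for fixed $k$. Writing $T(j)$ and $S(j)$ for the left- and right-hand sides, the base case $j=k$ is immediate, since both equal $k\,2^{k}$. For the step I compute the single-term differences: $S(j)-S(j+1)=j\binom{2k-j-1}{k-1}2^{j}$, while Pascal's rule $\binom{2k-j}{k}=\binom{2k-j-1}{k}+\binom{2k-j-1}{k-1}$ gives
\[
T(j)-T(j+1)=k\,2^{j}\Bigl(\binom{2k-j-1}{k-1}-\binom{2k-j-1}{k}\Bigr).
\]
Hence $S(j)-S(j+1)=T(j)-T(j+1)$ reduces to the elementary identity $k\binom{2k-j-1}{k}=(k-j)\binom{2k-j-1}{k-1}$, which is checked at once from the factorial expressions of the two binomials. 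Thus $S(j+1)=T(j+1)$ forces $S(j)=T(j)$, completing the induction and the proof.

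I expect the only genuine difficulty to be the careful bookkeeping of the factorials and of the signs carried by the powers of $-2$ in the first reduction, together with the degenerate $j=0$ term; once the claim is recast as the binomial identity above, the downward induction is entirely routine.
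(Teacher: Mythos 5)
Your proof is correct and follows essentially the same route as the paper: both arguments substitute the closed form \eqref{eq definition coefficients ajk} and reduce the claim to the identity $2^j(2k-j)!/(k-j)!=\sum_{l=j}^k 2^l l(2k-l-1)!/(k-l)!$ (which you merely rewrite in binomial form). Your downward induction on $j$ is the paper's telescoping argument in disguise — your step identity $S(j)-S(j+1)=T(j)-T(j+1)$, verified via Pascal's rule, is exactly the paper's observation that each summand $2^l l(2k-l-1)!/(k-l)!$ equals $\frac{2^l(2k-l)!}{(k-l)!}-\frac{2^{l+1}(2k-l-2)!\,(2k-l-1)}{(k-l-1)!\,(2k-l-1)^{0}}$, i.e.\ a difference of consecutive terms of the same explicit sequence.
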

\begin{proof}
    Note that
\begin{equation*}
    a_{j+1}^{(k+1)}=\frac{(2k-j)!}{j!(k-j)!(-2)^{k-j}}=\frac{2^{j-k}(-1)^{j-k}}{j!}\frac{(2k-j)!}{(k-j)!},
\end{equation*}
while, on the other hand
\begin{equation*}
    \sum_{l=j}^k(-1)^{l-j}\frac{l!}{j!}a_l^{(k)}=\sum_{l=j}^k(-1)^{l-j}\frac{l(2k-l-1)!}{j!(k-l)!(-2)^{k-l}}=\frac{2^{-k}(-1)^{j-k}}{j!}\sum_{l=j}^k\frac{2^{l}l(2k-l-1)!}{(k-l)!}.
\end{equation*}
Thus, \eqref{eq sum recursion coefficient} holds if and only if
\begin{equation*}
    \frac{2^j(2k-j)!}{(k-j)!}=\sum_{l=j}^k\frac{2^{l}l(2k-l-1)!}{(k-l)!}.
\end{equation*}
Consider the right hand side of the previous equation
    $\sum_{l=j}^k\frac{2^{l}l(2k-l-1)!}{(k-l)!}=\sum_{l=j}^{k-1}\frac{2^{l}l(2k-l-1)!}{(k-l)!}+2^kk!$,
note that
\begin{equation*}
    \frac{2^{l}l(2k-l-1)!}{(k-l)!}=\frac{2^{l}(2k-l)!}{(k-l)!}-\frac{2^{l+1}(2k-(l+1))!}{(k-(l+1)!},
\end{equation*}
thus the sum $\sum_{l=j}^k\frac{2^{l}l(2k-l-1)!}{(k-l)!}$ is telescopic and gives
$\sum_{l=j}^k\frac{2^{l}l(2k-l-1)!}{(k-l)!}=\frac{2^j(2k-j)!}{(k-j)!}-2^kk!$.
Finally
\begin{equation*}
    \sum_{l=j}^k\frac{2^{l}l(2k-l-1)!}{(k-l)!}=\frac{2^j(2k-j)!}{(k-j)!}-2^kk!+2^kk!=\frac{2^j(2k-j)!}{(k-j)!}.
\end{equation*}
\end{proof}

\begin{proof}[Proof of Proposition \ref{Prop potenza laplaciano derivata sferica}]
\eqref{eq 1 potenza laplaciano sferica} is proven in \cite[Proposition 4.2]{SCHCF}, moreover \eqref{eq 2 potenza laplaciano sferica} follows from Lemma \ref{lem:coefficienti}. Indeed, recall that $f'_s=\beta^{-1}F_1$ and that for any $j=1,2,\dots$ it holds
\begin{equation*}
    \partial_\beta^j(\beta^{-1}F_1)=\sum_{l=0}^j\frac{j!}{l!}(-1)^{j-l}\beta^{l-j-1}\partial_\beta^lF_1.
\end{equation*}
Then, by \eqref{eq 1 potenza laplaciano sferica} we have
\begin{equation*}
\begin{split}
    \Delta_{m+1}^kf'_s&=(m-3)\cdot...\cdot(m-2k-1)\sum_{j=1}^{k}a_j^{(k)}\beta^{j-2k}\partial_\beta^j(\beta^{-1}F_1)= \\
    &=(m-3)\cdot...\cdot(m-2k-1)\sum_{j=1}^{k}\sum_{l=0}^ja_j^{(k)}\beta^{j-2k}\frac{j!}{l!}(-1)^{j-l}\beta^{l-j-1}\partial_\beta^lF_1\\
    &=(m-3)\cdot...\cdot(m-2k-1)\sum_{l=0}^{k}\sum_{j=l}^ka_j^{(k)}\beta^{l-2k-1}\frac{j!}{l!}(-1)^{j-l}\partial_\beta^lF_1\\
    &=(m-3)\cdot...\cdot(m-2k-1)\sum_{j=0}^{k}\beta^{j-2k-1}\sum_{l=j}^ka_l^{(k)}\frac{l!}{j!}(-1)^{l-j}\partial_\beta^jF_1\\
    &=(m-3)\cdot...\cdot(m-2k-1)\sum_{j=0}^{k}a_{j+1}^{(k+1)}\beta^{j-2k-1}\partial_\beta^jF_1\\
    &=(m-3)\cdot...\cdot(m-2k-1)\sum_{j=1}^{k+1}a_{j}^{(k+1)}\beta^{j-2k-2}\partial_\beta^{j-1}F_1,
\end{split}
\end{equation*}
    where we have applied \ref{lem:coefficienti}.
\end{proof}

We have similar formulas for $\Delta_{m+1}^kf$ applied to a slice regular function $f$. We first state the case with $k=1$.
\begin{lem}\label{lem:laplaciano slice regular}
    Let let $f\in\mathcal{SR}(\Omega_D)$, then
        $\Delta_{m+1}f=2(1-m)\frac{\partial}{\partial x}f'_s$.
\end{lem}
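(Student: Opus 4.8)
The cleanest route I see exploits the factorization $4\partial\overline{\partial}=\Delta_{m+1}$ from \eqref{equazione fattorizzazione laplaciano}, together with the relation between the Dirac operator and the spherical derivative mentioned in the introduction. The plan is: (i) show $\overline{\partial}f=-\gamma_m f'_s$ for $f\in\mathcal{SR}(\Omega_D)$; (ii) show that on the circular function $f'_s$ the Dirac operator $\partial$ agrees with the slice derivative $\partial/\partial x$; (iii) combine the two via the factorization.

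For (i), write $x=\alpha+J\beta$ with $\alpha=x_0$, $\beta=\sqrt{x_1^2+\dots+x_m^2}$ and $J=\beta^{-1}\sum_{i=1}^m x_ie_i$, so that $f=F_0(\alpha,\beta)+JF_1(\alpha,\beta)$. I would compute $\overline{\partial}f=\tfrac12\partial_{x_0}f+\tfrac12\sum_i e_i\partial_{x_i}f$ directly. The only delicate point — and the step I expect to be the main obstacle — is differentiating the unit imaginary: $\partial_{x_i}J=\beta^{-1}e_i-\beta^{-2}x_iJ$. Feeding this in and using the algebraic identities $\sum_{i=1}^m e_i^2=-m$, $\sum_i x_ie_i=\beta J$ and $J^2=-1$, the first order terms reorganise into
\begin{equation*}
\overline{\partial}f=\tfrac12\big[(\partial_\alpha F_0-\partial_\beta F_1)+J(\partial_\alpha F_1+\partial_\beta F_0)\big]-\tfrac{m-1}{2\beta}F_1.
\end{equation*}
The Cauchy–Riemann equations $\partial_\alpha F_0=\partial_\beta F_1$ and $\partial_\beta F_0=-\partial_\alpha F_1$ (holomorphicity of the inducing stem function) annihilate the bracket, leaving $\overline{\partial}f=-\tfrac{m-1}{2}\,\beta^{-1}F_1=-\gamma_m f'_s$, since $f'_s=\mathcal{I}(F_1/\beta)$.

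For (ii), I note that $f'_s$ is circular, so its stem function is $\mathbb{R}_m$-valued (zero $e_1$-component). For such a function $g=G_0(\alpha,\beta)$ both $\partial g$ and $\partial g/\partial x=\mathcal{I}(\partial G_0/\partial z)$ compute to $\tfrac12\partial_\alpha G_0-\tfrac12 J\partial_\beta G_0$: the Dirac computation uses $\sum_i e_i\partial_{x_i}g=J\partial_\beta G_0$, while the slice-derivative one is the Wirtinger operator $\tfrac12(\partial_\alpha-e_1\partial_\beta)$ induced back through $\mathcal{I}$. Hence $\partial f'_s=\partial f'_s/\partial x$.

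Putting the pieces together, $\Delta_{m+1}f=4\partial\overline{\partial}f=4\partial(-\gamma_m f'_s)=-4\gamma_m\,\partial f'_s=-2(m-1)\,\partial f'_s/\partial x=2(1-m)\,\partial f'_s/\partial x$, as claimed. Alternatively, one can avoid the factorization entirely and expand $\Delta_{m+1}=\partial_{x_0}^2+\Delta_m$ directly, the only nontrivial input being the vector Laplacian $\Delta_m(JF_1)=J(\partial_\beta^2F_1+\tfrac{m-1}{\beta}\partial_\beta F_1-\tfrac{m-1}{\beta^2}F_1)$; the Cauchy–Riemann equations then kill the second order terms and the same matching with $2(1-m)\,\partial f'_s/\partial x$ follows, at the cost of a slightly heavier computation.
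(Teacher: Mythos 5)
Your proof is correct, but it takes a genuinely different route from the paper's. The paper proves Lemma \ref{lem:laplaciano slice regular} by brute force: it expands $\partial_{x_i}^2 f$ in the coordinates $(\alpha,\beta,J)$, sums over $i$, uses $\Delta_2F_0=\Delta_2F_1=0$ together with the Cauchy--Riemann equations, and then matches the result against a direct computation of $\frac{\partial}{\partial x}f'_s$ --- essentially the ``heavier'' alternative you sketch in your final sentence (your formula $\Delta_m(JF_1)=J\bigl(\partial_\beta^2F_1+\tfrac{m-1}{\beta}\partial_\beta F_1-\tfrac{m-1}{\beta^2}F_1\bigr)$ agrees with what the paper's computation produces). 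Your main argument instead factorizes $\Delta_{m+1}=4\partial\overline{\partial}$ and reduces everything to two first-order facts: $\overline{\partial}f=\tfrac{1-m}{2}f'_s=-\gamma_m f'_s$ for slice regular $f$ (your step (i); the intermediate formula and the cancellation via the Cauchy--Riemann equations check out), and the agreement of the Dirac operator $\partial$ with the slice derivative $\partial/\partial x$ on circular slice functions (step (ii), also correct with the paper's normalization of $\partial$, which carries the factor $\tfrac12$). This is precisely the mechanism the paper itself invokes later --- by citation to \cite{perotti2022cauchy} and \cite{Global} --- in Lemma \ref{lemma derivata sferica laplaciano e dirac} for the several-variables Fueter--Sce theorem, so your route makes that viewpoint self-contained and avoids all second-order computations. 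What the paper's computational proof buys in exchange is the generality recorded in the remark immediately after the lemma: since it only ever manipulates scalar-coefficient operators, it applies to functions of the form $af$ with $a\in\mathbb{R}_m$ constant, $f\in\mathcal{S}(\Omega_D)$ and $\partial f/\partial x^c=0$, which is what is actually needed in the proof of Theorem \ref{thm:iteratedlaplaciansevvar}, where the restricted one-variable functions carry constant left factors $J_P$. Your factorization argument does not extend verbatim to that setting, because a left constant $a$ does not commute with the left multiplications by $e_i$ inside $\overline{\partial}$, so $\overline{\partial}(af)\neq a\,\overline{\partial}f$ in general; for the lemma as stated, however, this is irrelevant and your proof is complete.
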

\begin{proof}
    Let $f=\mathcal{I}(F)$, with $F(\alpha+i\beta)=F_0(\alpha+i\beta)+e_1F_1(\alpha+i\beta)=F_0(\alpha+i\beta)+e_1\beta^{-1}F'_s(\alpha+i\beta)$ and $\Delta_2F_0=\Delta_2F_1=0$, by the slice regularity of $f$. Then, for any $i=1,\dots,n$ we have
    \begin{equation*}
        \partial_{x_i}f(\alpha+J\beta)=x_i\beta^{-1}\partial_\beta F_0(\alpha+i\beta)+e_i F'_s(\alpha+i\beta)+J\beta(-x_i\beta^{-3}F_1(\alpha+i\beta)+x_i\beta^{-2}\partial_\beta F_1(\alpha+i\beta))
    \end{equation*}
    and
    \begin{align*}
        \partial_{x_i}^2f(\alpha+J\beta)&=\beta^{-1}\partial_\beta F_0(\alpha+i\beta)-x_i^2\beta^{-3}\partial_\beta F_0(\alpha+i\beta)+x_i^2\beta^{-2}\partial_\beta^2 F_0(\alpha+i\beta)+\\
        &-2x_ie_i\beta^{-3}F_1(\alpha+i\beta)+2x_ie_i\beta^{-2}\partial_\beta F_1(\alpha+i\beta)-J\beta^{-2}F_1+\\
        &+3x_i^2J\beta^{-4}F_1(\alpha+i\beta)-x_i^2J\beta^{-3}\partial_\beta F_1(\alpha+i\beta)+J\beta^{-1}\partial_\beta F_1(\alpha+i\beta)+\\
        &-2x_i^2J\beta^{-3}\partial_\beta F_1(\alpha+i\beta)+x_i^2\beta^{-2}J\partial_\beta^2F_1(\alpha+i\beta).
    \end{align*}
    Then
\begin{align*}
    \Delta_{m+1}f(\alpha+J\beta)&=\partial_{x_0}^2f(\alpha+J\beta)+\textstyle\sum_{i=1}^m\partial_{x_i}^2f(\alpha+J\beta)\\
    &=\partial_{\alpha}^2F_0(\alpha+i\beta)+J\partial_{\alpha}^2F_1(\alpha+i\beta)+m\beta^{-1}\partial_\beta F_0(\alpha+i\beta)+\\
    &-\beta^{-1}\partial_\beta F_0(\alpha+i\beta)+\partial_\beta^2 F_0(\alpha+i\beta)-2J\beta^{-2}F_1(\alpha+i\beta)+2J\beta^{-1}\partial_\beta F_1(\alpha+i\beta)+\\
    &-mJ\beta^{-2}F_1+3J\beta^{-2}F_1(\alpha+i\beta)-J\beta^{-1}\partial_\beta F_1(\alpha+i\beta)+mJ\beta^{-1}\partial_\beta F_1(\alpha+i\beta)+\\
    &-2J\beta^{-1}\partial_\beta F_1(\alpha+i\beta)+J\partial_\beta^2F_1(\alpha+i\beta)\\
    &=(m-1)[\beta^{-1}\partial_\beta F_0(\alpha+i\beta)-J\beta^{-2}F_1+J\beta^{-1}\partial_\beta F_1(\alpha+i\beta)].
\end{align*}
On the other hand,
\begin{align*}
    \frac{\partial}{\partial x}f'_s(\alpha+J\beta)&=\mathcal{I}\left(\frac{1}{2}(\partial_\alpha-e_1\partial\beta)[\beta^{-1}F_1(\alpha+i\beta)]\right)(\alpha+J\beta)\\
    &=\mathcal{I}\left(\frac{1}{2}\beta^{-1}\partial_\alpha F_1+\frac{1}{2}e_1\beta^{-2}F_1-\frac{1}{2}e_1\beta^{-1}\partial_\beta F_1\right)(\alpha+J\beta)\\
    &=\mathcal{I}\left(-\frac{1}{2}\beta^{-1}\partial_\beta F_0+\frac{1}{2}e_1\beta^{-2}F_1-\frac{1}{2}e_1\beta^{-1}\partial_\beta F_1\right)(\alpha+J\beta)\\
    &=\frac{1}{2}\left(-\beta^{-1}\partial_\beta F_0(\alpha+i\beta)+J\beta^{-2}F_1(\alpha+i\beta)-J\beta^{-1}\partial_\beta F_1(\alpha+i\beta)\right).
\end{align*}
\end{proof}
\begin{oss}
    Despite this result is proven in \cite{perotti2022cauchy}, we showed a proof that require only the holomorphicity of $F$, in order to obtain an analogous result in several variables. More generally, Lemma \ref{lem:laplaciano slice regular} holds for any $af$ with $a\in\mathbb{R}_m$, $f\in\mathcal{S}(\Omega_D)$ and $\partial/\partial x^cf=0$.
\end{oss}

In order to give explicit formulas for the iterated Laplacian applied to slice regular functions, we need to switch the Laplacian and the slice derivative. Next Lemma shows that they commute when applied to circular slice functions.
\begin{lem}
\label{lemma Laplaciano e derivata slice commutano su funzioni circolari}
    Let $f\in\mathcal{S}_c(\Omega_D)\cap C^3(\Omega_D)$, then it holds
    \begin{equation}
    \label{eq. Laplaciano e derivata slice commutano su funzioni circolari}
\Delta_{m+1}\left(\frac{\partial f}{\partial x}\right)=\frac{\partial}{\partial x}(\Delta_{m+1} f).
    \end{equation}
\end{lem}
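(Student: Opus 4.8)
The plan is to transfer both $\Delta_{m+1}$ and $\partial/\partial x$ to the level of stem functions and then exploit the injectivity of $\mathcal I$. Recall that $\mathcal I:Stem(D)\to\mathcal S(\Omega_D)$ is a bijection and that $\partial f/\partial x=\mathcal I(\partial F/\partial z)$. The first step is to obtain an explicit expression for $\Delta_{m+1}$ on slice functions: repeating the computation in the proof of Lemma~\ref{lem:laplaciano slice regular}, but \emph{without} invoking the harmonicity of the components, yields, for every $f=\mathcal I(F_0+e_1F_1)\in\mathcal S(\Omega_D)\cap C^2(\Omega_D)$ and every $x=\alpha+J\beta$,
\[
\Delta_{m+1}f=\Bigl(\Delta_2F_0+\tfrac{m-1}{\beta}\partial_\beta F_0\Bigr)+J\Bigl(\Delta_2F_1+\tfrac{m-1}{\beta}\partial_\beta F_1-\tfrac{m-1}{\beta^2}F_1\Bigr),
\]
where the components are evaluated at $z=\alpha+i\beta$. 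Since the first bracket is even and the second odd in $\beta$, the right-hand side has the form $\widetilde F_0(z)+J\widetilde F_1(z)$; thus $\Delta_{m+1}f$ is again a slice function, $\Delta_{m+1}f=\mathcal I(\mathcal L F)$, where $\mathcal L$ is the operator on $Stem(D)$ defined by the two bracketed expressions.

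With this formula in hand, the identity \eqref{eq. Laplaciano e derivata slice commutano su funzioni circolari} becomes $\mathcal I\bigl(\mathcal L(\partial F/\partial z)\bigr)=\mathcal I\bigl(\partial(\mathcal L F)/\partial z\bigr)$, so by injectivity of $\mathcal I$ it suffices to prove the operator identity $\mathcal L(\partial F/\partial z)=\partial(\mathcal L F)/\partial z$ on the stem function $F=F_0$ of the circular function $f$ (for which $F_1\equiv0$). The hypothesis $f\in C^3(\Omega_D)$ is precisely what makes both sides well defined, $\partial f/\partial x$ being of class $C^2$ and $\Delta_{m+1}f$ of class $C^1$. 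Now $\partial F/\partial z=\tfrac12(\partial_\alpha F_0-e_1\partial_\beta F_0)$ has components $G_0=\tfrac12\partial_\alpha F_0$ and $G_1=-\tfrac12\partial_\beta F_0$, while $\mathcal L F$ is the \emph{circular} stem function with single component $H_0=\Delta_2F_0+\frac{m-1}{\beta}\partial_\beta F_0$. Applying $\mathcal L$ to $\partial F/\partial z$ and $\partial/\partial z$ to $\mathcal L F$, the required equality splits into two scalar identities obtained by matching the real parts and the $e_1$-parts.

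The only non-mechanical point, and the main obstacle, is the bookkeeping of the non-constant-coefficient radial terms. The operators $\Delta_2,\partial_\alpha,\partial_\beta$ have constant coefficients and commute freely, so the real parts agree at once (both reduce to $\tfrac12\Delta_2\partial_\alpha F_0+\frac{m-1}{2\beta}\partial_\alpha\partial_\beta F_0$, using that $\beta^{-1}$ is independent of $\alpha$). For the $e_1$-parts, however, $\partial_\beta$ does \emph{not} commute with multiplication by $\beta^{-1}$: the right-hand side gives $-\tfrac12\partial_\beta H_0$, whose term $-\frac{m-1}{2}\partial_\beta(\beta^{-1}\partial_\beta F_0)=-\frac{m-1}{2\beta}\partial_\beta^2F_0+\frac{m-1}{2\beta^2}\partial_\beta F_0$ produces a $\beta^{-2}$ contribution, and this is reproduced on the left exactly by the extra term $-\frac{m-1}{\beta^2}G_1=\frac{m-1}{2\beta^2}\partial_\beta F_0$ of $\mathcal L$ acting on the $e_1$-component $G_1$. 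Verifying that these contributions coincide with the correct signs closes the argument, and injectivity of $\mathcal I$ then yields \eqref{eq. Laplaciano e derivata slice commutano su funzioni circolari}.
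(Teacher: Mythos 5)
Your proof is correct, and it is organized genuinely differently from the paper's. The paper proves Lemma~\ref{lemma Laplaciano e derivata slice commutano su funzioni circolari} by brute force in the ambient coordinates $x_0,\dots,x_m$: since $\frac{\partial f}{\partial x}=\frac{1}{2}(\partial_\alpha F-J\partial_\beta F)$ is no longer circular, its Laplacian is computed term by term using $\partial_{x_i}J=e_i\beta^{-1}-x_iJ\beta^{-2}$, and the outcome is matched against the separately computed expression $\frac{\partial}{\partial x}(\Delta_{m+1}f)$. You instead promote the computation inside the proof of Lemma~\ref{lem:laplaciano slice regular} --- which, as you correctly observe, uses harmonicity of the components only in its final simplification --- to the general statement that $\Delta_{m+1}$ maps $C^2$ slice functions to slice functions, acting at the stem level by an operator $\mathcal{L}$; your parity check (first bracket even, second odd in $\beta$) is exactly what is needed for $\mathcal{L}F$ to be a stem function, a point the paper never has to address because it never asserts that $\Delta_{m+1}f$ is slice. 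Injectivity of $\mathcal{I}$ (available since $F_0$ and $F_1/\beta$ are recovered from $f$ via spherical value and derivative) then reduces \eqref{eq. Laplaciano e derivata slice commutano su funzioni circolari} to the two-variable identity $\mathcal{L}(\partial F/\partial z)=\partial(\mathcal{L}F)/\partial z$ for $F=F_0$, whose only delicate point is that $\partial_\beta$ does not commute with $\beta^{-1}$; your bookkeeping of the $\beta^{-2}$ terms (the contribution $-\tfrac{m-1}{\beta^2}G_1=\tfrac{m-1}{2\beta^2}\partial_\beta F_0$ on the left against $-\tfrac{m-1}{2}\partial_\beta(\beta^{-1}\partial_\beta F_0)$ on the right) is exact, and the real parts match trivially. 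What each approach buys: the paper's argument is self-contained at the price of a longer coordinate computation; yours isolates the structural fact that the Laplacian is induced by a stem-level operator, turns the commutation into a transparent planar identity, and is precisely the form of argument that transfers to the several-variables statement for $\mathcal{S}_{c,h}(\Omega_D)$ by restriction. One shared caveat, not a gap relative to the paper: both proofs work where $\beta\neq 0$ (all the $\beta^{-1}$, $\beta^{-2}$ terms) and extend to $\Omega_D$ by continuity of the $C^3$ data.
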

\begin{proof}
    Since $f\in\mathcal{S}_c(\Omega_D)$, $f(x)=F(x_0,\beta(x_1,\dots, x_m))$, where $\beta(x_1,\dots,x_m)=\sqrt{x_1^2+\dots x_m^2}$. By definition, if $x=\alpha+J\beta\in\mathbb{R}^{m+1}$,
    \begin{equation*}
\frac{\partial f}{\partial x}(x)=\frac{1}{2}\left(\partial_\alpha-J\partial_\beta\right)F(x_0,\beta)=\frac{1}{2}\left(\partial_\alpha F(x_0,\beta)-J\partial_\beta F(x_0,\beta)\right).
    \end{equation*}
    Note that for any $i=1,\dots m$, $\partial_{x_i}J=e_i\beta^{-1}-x_iJ\beta^{-2}$, so
    \begin{equation*}
\partial_{x_i}\left(\partial_\alpha F-J\partial_\beta F\right)=x_i\beta^{-1}\partial_\beta \partial_\alpha F-e_i\beta^{-1}\partial_\beta F+x_iJ\beta^{-2}\partial_\beta F-x_iJ\beta^{-1}\partial_\beta^2 F
    \end{equation*}
    and 
    \begin{equation*}
\begin{split}
    \partial_{x_i}^2\left(\partial_\alpha F-J\partial_\beta F\right)&=\beta^{-1}\partial_\beta\partial_\alpha F-x_i^2\beta^{-3}\partial_\beta\partial_{\alpha} F+x_i^2\beta^{-2}\partial_\beta^2\partial_\alpha F+x_ie_i\beta^{-3}\partial_\beta F-x_ie_i\beta^{-2}\partial_\beta^2 F+\\
    &+J\beta^{-2}\partial_\beta F+x_ie_i\beta^{-3}\partial_\beta F-x_i^2J\beta^{-4}\partial_\beta F-2x_i^2J\beta^{-4}\partial_\beta F+x_i^2J\beta^{-3}\partial_\beta^2 F+\\
    &-x_ie_i\beta^{-2}\partial_\beta^2 F+x_i^2J\beta^{-3}\partial^2_\beta F-J\beta^{-1}\partial_\beta^2F+x_i^2J\beta^{-3}\partial_\beta^2F-x_i^2J\beta^{-2}\partial_\beta^3F.
\end{split}
    \end{equation*}
    Thus, if $\Delta_m=\sum_{i=1}^m\partial_{x_i}^2$, we have
    \begin{equation*}
\begin{split}
    \Delta_m(\partial_\alpha F-J\partial_\beta F)&=m\beta^{-1}\partial_\beta\partial_\alpha F-\beta^{-1}\partial_\beta\partial_\alpha F+\partial_\beta^2\partial_\alpha F+J\beta^{-2}\partial_\beta F-J\beta^{-1}\partial_\beta^2 F+\\
    &+mJ\beta^{-2}\partial_\beta F+J\beta^{-2}\partial_\beta F-J\beta^{-2}\partial_\beta F-2J\beta^{-2}\partial_\beta F+J\beta^{-1}\partial_\beta^2 F+\\
    &-mJ\beta^{-1}\partial_\beta^2 F+J\beta^{-1}\partial_\beta^2F-J\partial_\beta^3F\\
    &=(m-1)\left[\beta^{-1}\partial_\beta\partial_\alpha F+J\beta^{-2}\partial_\beta F-J\beta^{-1}\partial_\beta^2 F\right]+\partial_\beta^2\partial_\alpha F-J\partial_\beta^3 F.
\end{split}
    \end{equation*}
    So, the left hand side of \eqref{eq. Laplaciano e derivata slice commutano su funzioni circolari} becomes
    \begin{equation*}
    \begin{split}
\Delta_{m+1}\left(\frac{\partial f}{\partial x}\right)=&\frac{1}{2}\left[\partial_\alpha^3 F+\partial_\beta^2\partial_\alpha F-J\partial_\beta\partial_\alpha^2 F-J\partial_\beta^3 F+\right.\\
&\left.+(m-1)(\beta^{-1}\partial_\beta\partial_\alpha F+J\beta^{-2}\partial_\beta F-J\beta^{-1}\partial_\beta^2 F)\right].
\end{split}
    \end{equation*}
    On the other hand, $\partial_{x_i}f=x_i\beta^{-1}\partial_\beta F$ and $\partial_{x_i}^2f=\beta^{-1}\partial_\beta F-x_i^2\beta^{-3}\partial_\beta F+x_i^2\beta^{-2}\partial_\beta^2F$,    so
    \begin{equation*}
\Delta_{m+1}f=\partial_\alpha^2F+(m-1)\beta^{-1}\partial_\beta F+\partial_\beta^2F
    \end{equation*}
    and finally the right hand side of \eqref{eq. Laplaciano e derivata slice commutano su funzioni circolari} becomes
    \begin{equation*}
\begin{split}
    \frac{\partial}{\partial x}(\Delta_{m+1}f)&=\frac{1}{2}(\partial_\alpha-J\partial_\beta)(\partial_\alpha^2F+(m-1)\beta^{-1}\partial_\beta F+\partial_\beta^2F)\\
    &=\frac{1}{2}\left[\partial_\alpha^3 F+\partial_\beta^2\partial_\alpha F-J\partial_\beta\partial_\alpha^2 F-J\partial_\beta^3 F+\right.\\
    &\quad \left.+(m-1)(\beta^{-1}\partial_\beta\partial_\alpha F+J\beta^{-2}\partial_\beta F-J\beta^{-1}\partial_\beta^2 F)\right].
\end{split}
    \end{equation*}
This proves \eqref{eq. Laplaciano e derivata slice commutano su funzioni circolari}.
    \end{proof}

    \begin{thm}
\label{theorem laplacian any order slice regular functions}
    Let $f\in\mathcal{S}\mathcal{R}(\Omega_D)$, then for any $k=1,2,\dots$ it holds
    \begin{equation}
    \label{eq formula laplaciano iterato 1}
\Delta_{m+1}^{k+1}f=-2(m-1)\cdot\dots\cdot(m-2k-1)\sum_{j=1}^ka_j^{(k)}\frac{\partial}{\partial x}\left(\beta^{j-2k}\partial_\beta^jf'_s\right),
    \end{equation}
    or equivalently, if $f=\mathcal{I}(F_0+e_1F_1)$,
    \begin{equation}
\label{eq formula laplaciano iterato 2}
\Delta_{m+1}^{k+1}f=-2(m-1)\cdot\dots\cdot(m-2k-1)\sum_{j=1}^{k+1}a_j^{(k+1)}\frac{\partial}{\partial x}\left(\beta^{j-2k-2}\partial_\beta^{j-1}F_1\right).
    \end{equation}
     In particular, $f$ is $(\gamma_m+1)$-polyharmonic, i.e.
$\Delta^{\frac{m+1}{2}}_{m+1}f=0$.
\end{thm}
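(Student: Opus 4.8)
The plan is to bootstrap the one-step identity of Lemma \ref{lem:laplaciano slice regular}, $\Delta_{m+1}f=-2(m-1)\frac{\partial}{\partial x}f'_s$, into the general formula by applying $\Delta_{m+1}^k$ to both sides and transferring all the Laplacians onto the spherical derivative $f'_s$, for which Proposition \ref{Prop potenza laplaciano derivata sferica} already supplies an explicit expression. Writing $\Delta_{m+1}^{k+1}f=\Delta_{m+1}^k(\Delta_{m+1}f)=-2(m-1)\,\Delta_{m+1}^k\!\left(\frac{\partial}{\partial x}f'_s\right)$, the entire argument reduces to commuting $\Delta_{m+1}^k$ with the slice derivative $\frac{\partial}{\partial x}$ when these act on $f'_s$.

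First I would record that $f'_s$ is a circular slice function, since its inducing stem function $F_1/\operatorname{Im}(z)$ is $\mathbb{R}_m$-valued, and that the Laplacian preserves circularity: as the computation in the proof of Lemma \ref{lemma Laplaciano e derivata slice commutano su funzioni circolari} shows, a circular $g=G(\alpha,\beta)$ satisfies $\Delta_{m+1}g=\partial_\alpha^2G+(m-1)\beta^{-1}\partial_\beta G+\partial_\beta^2 G$, which again depends only on $(\alpha,\beta)$. Hence every iterate $\Delta_{m+1}^j f'_s$ is circular, so Lemma \ref{lemma Laplaciano e derivata slice commutano su funzioni circolari} applies to $g=\Delta_{m+1}^j f'_s$ for each $j$. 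A straightforward induction on $k$ then gives $\Delta_{m+1}^k\!\left(\frac{\partial}{\partial x}f'_s\right)=\frac{\partial}{\partial x}\!\left(\Delta_{m+1}^k f'_s\right)$, and therefore $\Delta_{m+1}^{k+1}f=-2(m-1)\frac{\partial}{\partial x}\!\left(\Delta_{m+1}^k f'_s\right)$.

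It then remains only to insert the two equivalent expressions of Proposition \ref{Prop potenza laplaciano derivata sferica}: substituting \eqref{eq 1 potenza laplaciano sferica} and pulling the constant and the $j$-sum through $\frac{\partial}{\partial x}$ yields \eqref{eq formula laplaciano iterato 1}, while substituting \eqref{eq 2 potenza laplaciano sferica} yields \eqref{eq formula laplaciano iterato 2}; the prefactor $-2(m-1)(m-3)\cdots(m-2k-1)$ is precisely $-2(m-1)$ times the constant $(m-3)\cdots(m-2k-1)$ carried by $\Delta_{m+1}^k f'_s$. For the polyharmonicity statement I would set $k=\gamma_m=\frac{m-1}{2}$: by Proposition \ref{Prop potenza laplaciano derivata sferica} the spherical derivative is $\gamma_m$-polyharmonic, so $\Delta_{m+1}^{\gamma_m}f'_s=0$ and hence $\Delta_{m+1}^{\gamma_m+1}f=-2(m-1)\frac{\partial}{\partial x}(\Delta_{m+1}^{\gamma_m}f'_s)=0$; equivalently, the factor $m-2k-1=0$ already annihilates the prefactor.

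The main obstacle is the commutation step together with its bookkeeping, rather than any single computation: Lemma \ref{lemma Laplaciano e derivata slice commutano su funzioni circolari} is stated for one Laplacian, so iterating it hinges on the circularity of every intermediate $\Delta_{m+1}^j f'_s$ established above, as well as on enough regularity — slice regular functions are real analytic, so $f'_s$ and its iterates are $C^3$ on $\Omega_D\setminus\mathbb{R}$. One should also keep in mind that $f'_s$ and the right-hand sides are a priori only defined on $\Omega_D\setminus\mathbb{R}$ because of the negative powers of $\beta$; the identities are proven there and extend to all of $\Omega_D$ by continuity, as the left-hand side $\Delta_{m+1}^{k+1}f$ is smooth on the entire domain.
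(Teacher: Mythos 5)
Your proposal is correct and follows essentially the same route as the paper's proof: apply Lemma \ref{lem:laplaciano slice regular} once, commute $\Delta_{m+1}^k$ with $\partial/\partial x$ via Lemma \ref{lemma Laplaciano e derivata slice commutano su funzioni circolari} using the circularity of $f'_s$, and then substitute the formulas of Proposition \ref{Prop potenza laplaciano derivata sferica}. Your explicit justification that the Laplacian preserves circularity (so the commutation lemma can legitimately be iterated) is a detail the paper leaves implicit, but it does not change the argument.
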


\begin{proof}
The case $k=0$ is proven in Lemma \ref{lem:laplaciano slice regular}. Assume $k>1$, and let us prove \eqref{eq formula laplaciano iterato 1}. By \eqref{eq 1 potenza laplaciano sferica}, if $f\in\mathcal{S}\mathcal{R}(\Omega_D)$ we have
    \begin{equation*}
\Delta_{m+1}^{k}f'_s=(m-3)\cdot\dots\cdot(m-2k-1)\sum_{j=1}^ka_j^{(k)}\beta^{j-2k}\partial_\beta^jf'_s.
    \end{equation*}
    Now, using Lemma \ref{lem:laplaciano slice regular} and Lemma \ref{lemma Laplaciano e derivata slice commutano su funzioni circolari}, which applies, since $f'_s$ is circular, we have
    \begin{equation*}
\begin{split}
    \Delta_{m+1}^{k+1}f&=\Delta^k_{m+1}(\Delta_{m+1}f)=-2(m-1)\Delta_{m+1}^k\left(\frac{\partial f'_s}{\partial x}\right)=-2(m-1)\frac{\partial}{\partial x}\left(\Delta_{m+1}^kf'_s\right)\\
    &=-2(m-1)(m-3)\cdot\dots\cdot(m-2k-1)\sum_{j=1}^ka_j^{(k)}\frac{\partial}{\partial x}\left(\beta^{j-2k}\partial_\beta^jf'_s\right).
\end{split}
    \end{equation*}
Finally, \eqref{eq formula laplaciano iterato 2} follows analogously by using \eqref{eq 2 potenza laplaciano sferica}, instead of \eqref{eq 1 potenza laplaciano sferica}.
\end{proof}

\subsection{Polyharmonicity of several Clifford variables slice regular functions}

In this subsection we assume $\Omega_D\subset(\mathbb{R}^{m+1})^n$ open circular set. The next result extends Proposition \ref{Prop potenza laplaciano derivata sferica} to several variables.
    \begin{prop}\label{Prop:iteratedLaplaciansevvar}
        Let $f=\mathcal{I}(F)\in\mathcal{S}(\Omega_D)\cap\ker(\partial/\partial x_h^c)$, for some $h=1,\dots,n$, with $F=\sum_Ke_KF_K$. Then, for any $k=1,2,\dots$, it holds
    \begin{equation}\label{eq:Laplacianiderivatasfericasevvar1}
        \Delta^k_{m+1,h}f'_{s,h}(x)=(m-3)\cdots(m-2k-1)\sum_{j=1}^ka_j^{(k)}\beta_h^{j-2k}\sum_{K\in\mathcal{P}(n)\setminus\{h\}}[J_K,\partial_{\beta_h}^j(F'_h)_K(w',\alpha_h+i\beta_h,w'')]
    \end{equation}
    or, equivalently,
    \begin{equation}\label{eq:Laplacianiderivatasfericasevvar2}
        \Delta^k_{m+1,h}f'_{s,h}(x)=(m-3)\cdots(m-2k-1)\sum_{j=1}^{k+1}a_j^{(k+1)}\beta_h^{j-2k-2}\sum_{K\in\mathcal{P}(n)\setminus\{h\}}[J_K,\partial_{\beta_h}^jF_{K\cup\{h\}}(w',\alpha_h+i\beta_h,w'')].
    \end{equation}
    In particular, $f'_{s,h}$ is $\gamma_m$-polyharmonic in $x_h$, i.e. $\Delta_{m+1,h}^{\gamma_m}f'_{s,h}=0$.
\end{prop}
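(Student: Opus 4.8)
The plan is to reduce the several-variable statement to the one-variable Proposition \ref{Prop potenza laplaciano derivata sferica} by freezing every variable except $x_h$. Since $\Delta_{m+1,h}$ is a real, scalar differential operator acting only on the $h$-th Clifford variable, I would fix $y'=(x_1,\dots,x_{h-1})$ and $y''=(x_{h+1},\dots,x_n)$, write $x_j=\phi_{J_j}(z_j)$ and split $z=(w',z_h,w'')$ accordingly, and study $x_h\mapsto f'_{s,h}(y',x_h,y'')$ as a one-variable object. The hypothesis $f\in\ker(\partial/\partial x_h^c)$, i.e. $h$-holomorphicity of $F$, is precisely what will let each frozen slice be slice regular in $x_h$.

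First I would record, from the definition $f'_{s,h}=\mathcal{I}(F'_h)$, that $f'_{s,h}(x)=\sum_{K\in\mathcal{P}(n),\,h\notin K}[J_K,\beta_h^{-1}F_{K\cup\{h\}}(z)]$, where each multiplier $J_K=J_{k_1}\cdots J_{k_p}$ is constant in $x_h$ because $h\notin K$. For each such $K$ I would introduce the one-variable stem function $G_K:=[J_K,F_K(w',\cdot,w'')]+e_1[J_K,F_{K\cup\{h\}}(w',\cdot,w'')]$ in the single complex variable $z_h$. The parities \eqref{eq properies stem functions sev var} ($F_K$ even and $F_{K\cup\{h\}}$ odd in the $h$-th slot) guarantee that $G_K$ is a genuine one-variable stem function, and unwinding the Cauchy-Riemann system \eqref{Cauchy-Riemann equations} for the index $K$ shows that the components of $G_K$ satisfy the one-variable Cauchy-Riemann equations in $(\alpha_h,\beta_h)$; here I use that $[J_K,\cdot]$ is left multiplication by a constant and so commutes with $\partial_{\alpha_h}$ and $\partial_{\beta_h}$. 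Hence $g_K:=\mathcal{I}(G_K)$ is one-variable slice regular with $(g_K)'_s=[J_K,\beta_h^{-1}F_{K\cup\{h\}}]=[J_K,(F'_h)_K]$, and therefore $f'_{s,h}(y',\cdot,y'')=\sum_{K,\,h\notin K}(g_K)'_s$.

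With this reduction in hand, I would apply Proposition \ref{Prop potenza laplaciano derivata sferica} to each $g_K$. Since $\Delta_{m+1,h}$ is exactly the one-variable Laplacian $\Delta_{m+1}$ in $x_h$, and commutes with left multiplication by the constant $J_K$, formula \eqref{eq 1 potenza laplaciano sferica} yields $\Delta_{m+1,h}^k(g_K)'_s=(m-3)\cdots(m-2k-1)\sum_{j=1}^k a_j^{(k)}\beta_h^{j-2k}\partial_{\beta_h}^j(g_K)'_s$ with $\partial_{\beta_h}^j(g_K)'_s=[J_K,\partial_{\beta_h}^j(F'_h)_K]$. Summing over all $K$ with $h\notin K$ and invoking linearity of $\Delta_{m+1,h}^k$ gives \eqref{eq:Laplacianiderivatasfericasevvar1}; the equivalent form \eqref{eq:Laplacianiderivatasfericasevvar2} follows identically from \eqref{eq 2 potenza laplaciano sferica} after substituting $(F'_h)_K=\beta_h^{-1}F_{K\cup\{h\}}$. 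Polyharmonicity is then immediate: at $k=\gamma_m=(m-1)/2$ the leading product $(m-3)\cdots(m-2k-1)$ contains the factor $m-2\gamma_m-1=0$, so the right-hand side vanishes and $\Delta_{m+1,h}^{\gamma_m}f'_{s,h}=0$.

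I expect the main obstacle to be the bookkeeping in the one-variable reduction: verifying that each $G_K$ is a legitimate one-variable stem function and that the $h$-holomorphicity of $F$ descends, under the fixed multiplier $[J_K,\cdot]$ and through the ordered-product structure, to the one-variable Cauchy-Riemann equations. The point worth emphasizing is that although $f$ need not be slice with respect to $x_h$ when $h\neq 1$, the partial spherical derivative $f'_{s,h}$ is always well defined and its restriction in $x_h$ splits cleanly into the slice-regular pieces $g_K$, which is exactly what makes the one-variable machinery applicable here.
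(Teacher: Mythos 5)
Your proof is correct, and its engine is the same as the paper's: freeze every variable except $x_h$, exploit the fact that the multipliers $J_K$ (with $h\notin K$) are constants in $x_h$ commuting with the scalar operator $\Delta_{m+1,h}$, and let the $h$-holomorphicity of $F$ turn the frozen data into one-variable slice regular objects to which the one-variable machinery applies. The difference is organizational. The paper bundles the restriction into a single auxiliary one-variable slice regular function $f^{(x)}_h$ whose stem components are the sums $\sum_K[J_K,F_K]$ and $\sum_K[J_K,F_{K\cup\{h\}}]$, and then, in Lemma \ref{lem:Ginsevevar}, re-derives the iterated Laplacian of its spherical derivative via Whitney's theorem (writing the odd component as $\beta_h G^{(x)}_h(\alpha_h+i\beta_h^2)$) combined with the external formula of \cite{SCHCF}; you instead split the restriction into the $2^{n-1}$ summands $g_K=\mathcal{I}(G_K)$ and apply the already-established Proposition \ref{Prop potenza laplaciano derivata sferica} to each, summing by linearity. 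Both reductions rest on the identical observation that left multiplication by the constant $J_K$ preserves stem functions and the Cauchy--Riemann system \eqref{Cauchy-Riemann equations} and commutes with $\partial_{\alpha_h}$, $\partial_{\beta_h}$, $\Delta_{m+1,h}$, so your route is a legitimate shortcut that bypasses Whitney's theorem altogether; whether one groups the summands into a single slice function or treats them separately is immaterial. One remark: carried out as you describe, the substitution $(F'_h)_K=\beta_h^{-1}F_{K\cup\{h\}}$ into \eqref{eq 2 potenza laplaciano sferica} produces $\partial_{\beta_h}^{j-1}F_{K\cup\{h\}}$, consistent with the one-variable formula; the exponent $j$ appearing in the displayed \eqref{eq:Laplacianiderivatasfericasevvar2} is a misprint in the statement, so what your argument yields is the corrected version, exactly as the paper's own proof does.
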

In order to prove Proposition \ref{Prop:iteratedLaplaciansevvar}, we will proceed as in the one variable case. First, we need a several variables version of \cite[Theorem 4.1]{Harmonicity}.
Let $h\in\{1,\dots,n\}$ and let $F=\sum_{K\in\mathcal{P}(n)}e_KF_K:D\subset\mathbb{C}^n\to\mathbb{R}_m\otimes\mathbb{R}^{2^n}$ be a stem function, whose components are analytic functions with respect to $z_h$. Since, by definition, for any $K\in\mathcal{P}(n)\setminus\{h\}$, $F_{K\cup\{h\}}$ is odd with respect to $\beta_h$, by Whitney's Theorem \cite{Whitney}, there exist functions $G_K:D\to\mathbb{R}_m$, analytic with respect to $z_h$, such that $$\beta_hG_K(z_1,\dots,z_{h-1},\alpha_h+i\beta^2_h,z_{h+1},\dots,z_n)=F_{K\cup\{h\}}(z_1,\dots,z_{h-1},\alpha_h+i\beta_h,z_{h+1},\dots,z_n).$$
\begin{lem}\label{lem:Ginsevevar}
    Let $\Omega_D\subset(\mathbb{R}^{m+1})^n$ and let $f\in\mathcal{S}(\Omega_D)\cap\ker(\partial/\partial x_h^c)$, for some $h=1,\dots,n$. Then, for any $k=1,2,\dots$, it holds
    \begin{equation*}
        \Delta^k_{m+1,h}f'_{s,h}(x)=2^k(m-3)\cdots(m-2k-1)\sum_{K\in\mathcal{P}(n), h\notin K}[J_K,\partial^k_{2,h}G_K(w',\alpha_h+i\beta_h^2,w'')],
    \end{equation*}
    for any $x=(\alpha_1+J_1\beta_1,\dots,\alpha_n+J_n\beta_n)$, where $w'=(\alpha_1+i\beta_1,\dots,\alpha_{h-1}+i\beta_{h-1})$ and $w''=(\alpha_{h+1}+i\beta_{h+1},\dots,\alpha_{n}+i\beta_{n})$. 
    \end{lem}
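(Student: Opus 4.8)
The plan is to reduce the several variables statement to the one Clifford variable version of the formula, namely \cite[Theorem 4.1]{Harmonicity}. The starting point is the explicit expression of the partial spherical derivative. By definition $f'_{s,h}=\mathcal{I}(F'_h)$ with $F'_h=\sum_{K\in\mathcal{P}(n),\,h\notin K}e_K\beta_h^{-1}F_{K\cup\{h\}}$, so that for $x=(\phi_{J_1}\times\dots\times\phi_{J_n})(z)$ one has
\[
f'_{s,h}(x)=\sum_{K\in\mathcal{P}(n),\,h\notin K}[J_K,\beta_h^{-1}F_{K\cup\{h\}}(z)]=\sum_{K\in\mathcal{P}(n),\,h\notin K}[J_K,G_K(w',\alpha_h+i\beta_h^2,w'')],
\]
where the last equality is the Whitney factorisation $\beta_hG_K=F_{K\cup\{h\}}$. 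The crucial observation is that $\Delta_{m+1,h}$ is a real differential operator in the coordinates of $x_h$ only, whereas for $h\notin K$ the units $J_k$, $k\in K$, are independent of $x_h$; since each ordered bracket $[J_K,\,\cdot\,]$ amounts to left multiplication by the fixed Clifford number $J_{k_1}\cdots J_{k_p}$, it is $\mathbb{R}$-linear in its last entry and hence commutes with $\Delta_{m+1,h}$. Therefore it suffices to compute $\Delta_{m+1,h}^k$ on each summand $u_K(x_h):=G_K(w',\alpha_h+i\beta_h^2,w'')$, viewed as a function of the single variable $x_h$ with the remaining variables frozen.

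Next I would verify that each $u_K$ falls exactly within the scope of the one-variable identity. Since $f\in\ker(\partial/\partial x_h^c)$, the stem function $F$ is $h$-holomorphic, so by the Cauchy--Riemann equations \eqref{Cauchy-Riemann equations} the pair $F_K,F_{K\cup\{h\}}$ satisfies, for every $K$ with $h\notin K$,
\[
\frac{\partial F_K}{\partial\alpha_h}=\frac{\partial F_{K\cup\{h\}}}{\partial\beta_h},\qquad
\frac{\partial F_K}{\partial\beta_h}=-\frac{\partial F_{K\cup\{h\}}}{\partial\alpha_h}.
\]
Differentiating once more and adding shows that $F_{K\cup\{h\}}$ is harmonic in $(\alpha_h,\beta_h)$; it is moreover odd in $\beta_h$ by \eqref{eq properies stem functions sev var}. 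Thus $u_K$, as a function of $x_h$, is precisely the spherical derivative of a one Clifford variable function whose defining component is harmonic and odd, which is the only structural input required by \cite[Theorem 4.1]{Harmonicity} (this is also the reason the one-variable proof was reworked to rely on holomorphicity alone).

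Applying that one-variable result to each $u_K$ then gives
\[
\Delta_{m+1,h}^k u_K=2^k(m-3)\cdots(m-2k-1)\,\partial_{2,h}^kG_K(w',\alpha_h+i\beta_h^2,w''),
\]
and summing over the subsets $K$ with $h\notin K$, after pulling $\Delta_{m+1,h}$ through the brackets, yields the claimed formula. The only genuinely delicate points are the two commutation/reduction steps highlighted above: that $\Delta_{m+1,h}$ passes through the ordered brackets $[J_K,\,\cdot\,]$ (which relies on $h\notin K$, so that no $J_k$ depends on $x_h$) and that the Whitney substitution transporting the one-variable identity into the $\partial_{2,h}$-form in $G_K$ is legitimate for every $K$ simultaneously; both ultimately reduce to the harmonicity and parity of the components $F_{K\cup\{h\}}$ supplied by $h$-holomorphicity. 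I expect the latter uniformity over all $K$, rather than any single computation, to be the main obstacle to a clean write-up.
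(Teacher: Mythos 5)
Your proposal is correct and takes essentially the same route as the paper: both reduce the statement to the one-variable formula of \cite[Theorem 4.1]{Harmonicity} through the Whitney factorizations $\beta_h G_K=F_{K\cup\{h\}}$, with $h$-holomorphicity supplying the harmonicity (and the stem-function condition the oddness) of the components $F_{K\cup\{h\}}$. The only organizational difference is that you commute $\Delta_{m+1,h}$ through the constant left factors $[J_K,\,\cdot\,]$ and invoke the one-variable result term by term, whereas the paper assembles the whole sum into a single one-variable slice regular function $f^{(x)}_h$, applies the one-variable result once, and then identifies its Whitney factor $G^{(x)}_h$ with $\sum_{K}[J_K,G_K]$; the two orderings are interchangeable, so the ``uniformity over $K$'' you flag as a potential obstacle is not actually an issue.
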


    \begin{proof}
    We have $\Delta^k_{m+1,h}f'_{s,h}(x)=\Delta^k_{m+1}(f^{(x)}_h)'_s(x_h)$, with $f^{(x)}_h=\mathcal{I}(F^{(x)}_h)$, where if $x=(\alpha_1+J_1\beta_1,\dots,\alpha_n+J_n\beta_n)$, $w'=(\alpha_1+i\beta_1,\dots,\alpha_{h-1}+i\beta_{h-1})$ and $w''=(\alpha_{h+1}+i\beta_{h+1},\dots,\alpha_{n}+i\beta_{n})$, then $F^{(x)}_h=F^{(x)}_{0,h}+e_1F^{(x)}_{1,h}$, with $$F^{(x)}_{0,h}(z)=\sum_{K\in\mathcal{P}(n),h\notin K}[J_K,F_K(w',z,w'')],\qquad F^{(x)}_{1,h}(z)=\sum_{K\in\mathcal{P}(n),h\notin K}[J_K,F_{K\cup\{h\}}(w',z,w'')].$$
    Since for any $x\in\Omega_D$, $F^{(x)}_{1,h}$ is an analytic odd function w.r.t. $\beta$, by Whitney's Theorem there exist a real analytic function $G^{(x)}_h$ such that $\beta G^{(x)}_h(\alpha+i\beta^2)=F^{(x)}_{1,h}(\alpha+i\beta)$ and since $\Delta_2F^{(x)}_{1,h}=0$, by \cite{Harmonicity} we have that
    \begin{equation}\label{eq:laplacianopreliminare}
        \Delta^k_{m+1,h}f'_{s,h}(x)=2^k(m-3)\cdots(m-2k-1)\partial_2^kG^{(x)}_h(\operatorname{Re}(x_h),|\operatorname{Im}(x_h)|^2).
    \end{equation}
    On the other hand, for every $K\in\mathcal{P}(n)$,  we have that $\beta_hG_K(w',\alpha_h+i\beta_h^2,w'')=F_{K\cup\{h\}}(w',\alpha_h+i\beta_h,w'')$. Thus,
    \begin{align*}
        \beta_hG^{(x)}_h(\alpha_h+i\beta_h^2)&=F^{(x)}_{1,h}(\alpha_h+i\beta_h)=\sum_{K\in\mathcal{P}(n),h\notin K}[J_K,F_{K\cup\{h\}}(w',\alpha_h+i\beta_h,w'')]\\
        &=\sum_{K\in\mathcal{P}(n),h\notin K}[J_K,\beta_hG_K(w',\alpha_h+i\beta_h^2,w'')],
    \end{align*}
    from which we get $G^{(x)}_h(\alpha_h+i\beta_h^2)=\sum_{K\in\mathcal{P}(n),h\notin K}[J_K,G_K(w',\alpha_h+i\beta_h^2,w'')]$ and by \eqref{eq:laplacianopreliminare} we conclude. 
\end{proof}

Now, it is easy to prove Proposition \ref{Prop:iteratedLaplaciansevvar}.
\begin{proof}[Proof of Proposition \ref{Prop:iteratedLaplaciansevvar}]
    Formula \eqref{eq:Laplacianiderivatasfericasevvar1} follows from Lemma \ref{lem:Ginsevevar} and \cite[Proposition 4.2]{SCHCF}, while \eqref{eq:Laplacianiderivatasfericasevvar2} by \eqref{eq:Laplacianiderivatasfericasevvar1} and  Lemma \ref{lem:coefficienti}.
\end{proof}

We show that $\Delta_{m+1,h}$ and $\partial/\partial x_h$ commute when applied to functions in $\mathcal{S}_{c,h}(\Omega_D)$.
\begin{lem}
    Let $f\in\mathcal{S}_{c,h}(\Omega_D)\cap C^3(\Omega_D)$, for some $h=1,\dots,n$. Then it holds
    \begin{equation}
    \label{eq. h-Laplaciano e derivata slice commutano su funzioni circolari}
\Delta_{m+1,h}\left(\frac{\partial f}{\partial x_h}\right)=\frac{\partial}{\partial x_h}(\Delta_{m+1,h} f).
    \end{equation}
\end{lem}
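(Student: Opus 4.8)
The plan is to reduce this several-variable commutation statement to the one-variable result already established in Lemma \ref{lemma Laplaciano e derivata slice commutano su funzioni circolari}. The key observation is that $\Delta_{m+1,h}$ and $\partial/\partial x_h$ are both operators that act only on the $x_h$-variable, freezing all other variables $x_t$ with $t\neq h$. Thus the identity \eqref{eq. h-Laplaciano e derivata slice commutano su funzioni circolari} is, for each fixed value of the remaining variables, exactly the one-variable identity \eqref{eq. Laplaciano e derivata slice commutano su funzioni circolari}.

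First I would exploit partial sliceness. Since $f\in\mathcal{S}_{c,h}(\Omega_D)$, its restriction $f^y_h$ in the variable $x_h$ (with $y\in\Omega_D$ arbitrary) is, by the definition of being circular with respect to $x_h$, a one-variable \emph{circular} slice function on $\Omega_{D,h}(y)\subset\mathbb{R}^{m+1}$, as recorded in Remark \ref{remark structure slice function wrt xh}. Concretely, writing $x_h=\alpha_h+J_h\beta_h$ and freezing the other coordinates, $f$ depends on $x_h$ only through $\alpha_h$ and $\beta_h=\sqrt{(x_h)_1^2+\dots+(x_h)_m^2}$, so $f(x)=\Phi(x_{h,0},\beta_h)$ for a suitable function $\Phi$ parametrised by $y$. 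This is precisely the structural hypothesis used at the start of the proof of Lemma \ref{lemma Laplaciano e derivata slice commutano su funzioni circolari}. Moreover, by definition $\partial f/\partial x_h=\mathcal{I}(\partial_hF)$ and $\Delta_{m+1,h}$ is the Laplacian in the $m+1$ coordinates comprising $x_h$, so both sides of \eqref{eq. h-Laplaciano e derivata slice commutano su funzioni circolari}, when evaluated at a point with the $t\neq h$ coordinates fixed, coincide with the two sides of \eqref{eq. Laplaciano e derivata slice commutano su funzioni circolari} applied to the one-variable circular slice function $f^y_h$.

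The cleanest way to carry this out is therefore: fix $y\in\Omega_D$, consider the restriction $f^y_h\in\mathcal{S}_c(\Omega_{D,h}(y))\cap C^3$, apply Lemma \ref{lemma Laplaciano e derivata slice commutano su funzioni circolari} to obtain $\Delta_{m+1}(\partial f^y_h/\partial x)=(\partial/\partial x)(\Delta_{m+1}f^y_h)$, and then observe that $\Delta_{m+1,h}f$ evaluated along the $h$-slice through $y$ equals $\Delta_{m+1}f^y_h$, and likewise $(\partial f/\partial x_h)$ restricts to $\partial f^y_h/\partial x$. Since this holds for every $y$, the two slice functions $\Delta_{m+1,h}(\partial f/\partial x_h)$ and $(\partial/\partial x_h)(\Delta_{m+1,h}f)$ agree on every $h$-slice, hence are equal on all of $\Omega_D$. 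Alternatively, one could repeat verbatim the coordinate computation of Lemma \ref{lemma Laplaciano e derivata slice commutano su funzioni circolari} with $x_h$ in place of $x$ and $F=F(x_{h,0},\beta_h)$ regarded as parametrised by the frozen variables; the computation is identical because no derivative in $x_t$ ($t\neq h$) ever appears.

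The only point requiring mild care is verifying that the restriction and evaluation operations genuinely commute with $\Delta_{m+1,h}$ and $\partial/\partial x_h$ as claimed, i.e. that $(\Delta_{m+1,h}f)^y_h=\Delta_{m+1}(f^y_h)$ and $(\partial f/\partial x_h)^y_h=\partial(f^y_h)/\partial x$. This is where the definitions of $\partial/\partial x_h=\mathcal{I}(\partial_h\,\cdot\,)$ together with the compatibility of $\mathcal{I}$ with restriction (Remark \ref{remark structure slice function wrt xh}) must be invoked, but it is essentially formal since both operators are intrinsically $x_h$-local. I do not expect any genuine obstacle here; the substance of the result lies entirely in the one-variable computation already done in Lemma \ref{lemma Laplaciano e derivata slice commutano su funzioni circolari}, and the several-variable statement is obtained by the localisation argument above.
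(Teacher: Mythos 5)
Your proposal is correct and follows essentially the same route as the paper: the paper's proof also fixes $y\in\Omega_D$, notes that $f^{(y)}_h\in\mathcal{S}_c(\Omega_{D,h})$ because $f\in\mathcal{S}_{c,h}(\Omega_D)$, applies the one-variable Lemma \ref{lemma Laplaciano e derivata slice commutano su funzioni circolari} to this restriction, and uses the $x_h$-locality of $\Delta_{m+1,h}$ and $\partial/\partial x_h$ to transfer the identity back to $\Omega_D$. Your extra care about the (formal) compatibility of restriction with the two operators is implicit in the paper's one-line chain of equalities, so there is nothing missing.
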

\begin{proof}
    By Proposition \ref{proposizione proprieta derivata sferica} (1), $f'_{s,h}\in\mathcal{S}_{c,h}(\Omega_{D_h})$, then for any fixed $y\in\Omega_{D_h}$, $f^{(y)}_h\in\mathcal{S}_c(\Omega_{D,h})$, hence by \eqref{eq. Laplaciano e derivata slice commutano su funzioni circolari} we have
    \begin{equation*}
        \Delta_{m+1,h}\left(\frac{\partial f}{\partial x_h}(y)\right)=\Delta_{m+1}\frac{\partial}{\partial x}(f^{(y)}_h(x))=\frac{\partial}{\partial x}\left(\Delta_{m+1}f^{(y)}_h(x)\right)=\frac{\partial}{\partial x_h}(\Delta_{m+1,h} f(y)).
    \end{equation*}
\end{proof}

\begin{thm}\label{thm:iteratedlaplaciansevvar}
    Let $f=\mathcal{I}(F)\in\mathcal{S}(\Omega_D)\cap\ker(\partial/\partial x_h^c)$, for some $h=1,\dots,n$, with $F=\sum_Ke_KF_K$. Then, for any $k\in\mathbb{N}$, it holds
    \begin{equation}\label{eq:Laplaciano slice reg sev var 1}
        \Delta_{m+1,h}^{k+1}f(x)=2(m-1)\cdots(m-2k-1)\sum_{j=1}^ka_j^{(k)}\frac{\partial}{\partial x_h}\beta_h^{j-2k}\sum_{K\in\mathcal{P}(n)\setminus\{h\}}[J_K,\partial_{\beta_h}^j(F'_h)_K(w',\alpha_h+i\beta_h,w'')],
    \end{equation}
    or equivalently
    \begin{equation}\label{eq:Laplaciano slice reg sev var 2}
        \Delta_{m+1,h}^{k+1}f(x)=2(m-1)\cdots(m-2k-1)\sum_{j=1}^{k+1}a_j^{(k+1)}\frac{\partial}{\partial x_h}\beta_h^{j-2k-2}\sum_{K\in\mathcal{P}(n)\setminus\{h\}}[J_K,\partial_{\beta_h}^jF_{K\cup\{h\}}(w',\alpha_h+i\beta_h,w'')],
    \end{equation}
    where for $k=0$, we mean 
    \begin{equation}\label{eq:Leplaciano 1 sev var}
        \Delta_{m+1,h}f(x)=-2(m-1)\frac{\partial}{\partial x_h}f'_{s,h}(x).
    \end{equation}
    In particular, $f$ is $(\gamma_m+1)$-polyharmonic in any variable, i.e. $\Delta_{m+1,h}^\frac{m+1}{2}f=0$, for any $h=1,\dots, n$.
\end{thm}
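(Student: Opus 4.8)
The plan is to transcribe the one--variable proof of Theorem \ref{theorem laplacian any order slice regular functions} into the $x_h$--partial setting, feeding on the several--variable counterparts already established: the base identity \eqref{eq:Leplaciano 1 sev var}, the commutation rule \eqref{eq. h-Laplaciano e derivata slice commutano su funzioni circolari}, and Proposition \ref{Prop:iteratedLaplaciansevvar} for the iterated $x_h$--Laplacian of the partial spherical derivative. As in one variable, the argument is an induction on $k$ that peels off a single Laplacian, converts it through \eqref{eq:Leplaciano 1 sev var} into a slice derivative of $f'_{s,h}$, transports that derivative outside the remaining Laplacians, and finally substitutes Proposition \ref{Prop:iteratedLaplaciansevvar}.

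First I would settle the base case $k=0$, i.e.\ \eqref{eq:Leplaciano 1 sev var}. Freezing every variable except the $h$--th produces, exactly as in the proof of Lemma \ref{lem:Ginsevevar}, a one--variable slice function $f^{(x)}_h=\mathcal{I}\bigl(F^{(x)}_{0,h}+e_1F^{(x)}_{1,h}\bigr)$; the hypothesis $\partial f/\partial x_h^c=0$ forces this stem to be holomorphic, so $f^{(x)}_h$ is one--variable slice regular. Since $\Delta_{m+1,h}$ acts on the restriction as the full Laplacian $\Delta_{m+1}$ and $(f^{(x)}_h)'_s(x_h)=f'_{s,h}(x)$, the one--variable Lemma \ref{lem:laplaciano slice regular} yields $\Delta_{m+1,h}f=-2(m-1)\,\partial f'_{s,h}/\partial x_h$, which is \eqref{eq:Leplaciano 1 sev var}.

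For the inductive step I would split off a single Laplacian and carry the slice derivative to the outside:
\begin{equation*}
\Delta_{m+1,h}^{k+1}f
=\Delta_{m+1,h}^{k}\bigl(\Delta_{m+1,h}f\bigr)
=-2(m-1)\,\Delta_{m+1,h}^{k}\Bigl(\tfrac{\partial}{\partial x_h}f'_{s,h}\Bigr)
=-2(m-1)\,\tfrac{\partial}{\partial x_h}\bigl(\Delta_{m+1,h}^{k}f'_{s,h}\bigr),
\end{equation*}
the first equality being \eqref{eq:Leplaciano 1 sev var} and the last the commutation rule \eqref{eq. h-Laplaciano e derivata slice commutano su funzioni circolari} applied $k$ times. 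Inserting the value of $\Delta_{m+1,h}^{k}f'_{s,h}$ from \eqref{eq:Laplacianiderivatasfericasevvar1} yields \eqref{eq:Laplaciano slice reg sev var 1}, the overall constant being the product $-2(m-1)(m-3)\cdots(m-2k-1)$ of the constants in \eqref{eq:Leplaciano 1 sev var} and \eqref{eq:Laplacianiderivatasfericasevvar1}; the equivalent form \eqref{eq:Laplaciano slice reg sev var 2} then follows by expanding $\partial_{\beta_h}^{j}\bigl(\beta_h^{-1}F_{K\cup\{h\}}\bigr)$ and invoking Lemma \ref{lem:coefficienti}, precisely as in the passage from \eqref{eq 1 potenza laplaciano sferica} to \eqref{eq 2 potenza laplaciano sferica}. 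Finally, taking $k=\gamma_m=\tfrac{m-1}{2}$ makes the factor $(m-2k-1)$ vanish, so $\Delta_{m+1,h}^{(m+1)/2}f=0$ and $f$ is $(\gamma_m+1)$--polyharmonic in $x_h$.

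The one delicate point is the iterated use of \eqref{eq. h-Laplaciano e derivata slice commutano su funzioni circolari}: that identity holds only on $\mathcal{S}_{c,h}(\Omega_D)\cap C^3$, so to move $\partial/\partial x_h$ past all $k$ copies of $\Delta_{m+1,h}$ I must check that circularity in $x_h$ is preserved at each application of $\Delta_{m+1,h}$. This holds because a function in $\mathcal{S}_{c,h}$ depends on $x_h$ only through $\alpha_h$ and $\beta_h=|\operatorname{Im}(x_h)|$, and the radial form of the $x_h$--Laplacian computed in the proof of \eqref{eq. Laplaciano e derivata slice commutano su funzioni circolari} sends such functions to functions of the same type; combined with Proposition \ref{proposizione proprieta derivata sferica}(1), which guarantees $f'_{s,h}\in\mathcal{S}_{c,h}(\Omega_{D_h})$, this legitimizes each of the $k$ swaps. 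Smoothness poses no problem, since $f\in\ker(\partial/\partial x_h^c)$ makes the relevant stem components real analytic in $z_h$.
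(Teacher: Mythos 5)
Your reduction of the general case to the base case \eqref{eq:Leplaciano 1 sev var} --- one application of that identity, then $k$ swaps via \eqref{eq. h-Laplaciano e derivata slice commutano su funzioni circolari} (with the circularity-preservation check you supply), then substitution of Proposition \ref{Prop:iteratedLaplaciansevvar} and Lemma \ref{lem:coefficienti} --- is exactly the paper's route, and that part is sound. The genuine gap is in your proof of the base case itself. You assert that freezing all variables but the $h$-th turns $f$ into the one-variable slice function $f^{(x)}_h=\mathcal{I}\bigl(F^{(x)}_{0,h}+e_1F^{(x)}_{1,h}\bigr)$ of Lemma \ref{lem:Ginsevevar}. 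That identification is false for $h>1$: in the defining formula $f(x)=\sum_K[J_K,F_K(z)]$ the unit $J_h$ sits \emph{inside} the ordered Clifford product, so the actual restriction is
\begin{equation*}
f^{(y)}_h(x)=\sum_{P,Q}J_P\bigl(J_QF_{P\cup Q}+J\,J_QF_{P\cup\{h\}\cup Q}\bigr),\qquad P\subset\{1,\dots,h-1\},\ Q\subset\{h+1,\dots,n\},
\end{equation*}
whereas $\mathcal{I}\bigl(F^{(x)}_{0,h}+e_1F^{(x)}_{1,h}\bigr)$ carries $J$ leftmost, i.e.\ $JJ_PJ_Q$ instead of $J_PJJ_Q$. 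Since $J$ varies with $x_h$ and does not commute with $J_P$, these are different functions; already for $f(x_1,x_2)=x_1x_2$ and $h=2$ the restriction $y_1x_2=y_1\alpha_2+y_1J_2\beta_2$ is not of the form $A+J_2B$ with $A,B$ independent of $J_2$, hence not a one-variable slice function at all, and for $f(x_1,x_2)=x_1x_2^3$ the Laplacian of the restriction and that of the auxiliary function genuinely differ (their difference is a nonzero multiple of $\beta_1\bigl(J_1\operatorname{Im}(x_2)-\operatorname{Im}(x_2)J_1\bigr)$). The hypothesis $f\in\mathcal{S}(\Omega_D)\cap\ker(\partial/\partial x_h^c)$ does not place $f$ in $\mathcal{S}_h(\Omega_D)$, so Lemma \ref{lem:laplaciano slice regular} cannot be invoked on the restriction as if it were slice regular: your argument computes the Laplacian of the auxiliary function of Lemma \ref{lem:Ginsevevar}, not of $f$. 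The auxiliary-function trick is legitimate in Proposition \ref{Prop:iteratedLaplaciansevvar} only because $f'_{s,h}$ contains no occurrence of $J_h$, which is precisely what fails for $f$ itself.

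The paper closes exactly this gap, and this is the actual content of its proof of \eqref{eq:Leplaciano 1 sev var}: it decomposes the restriction as $f^{(y)}_h=\sum_P J_P\,g^{(y)}_P$, where each $g^{(y)}_P=A_P+JB_P$ (with $A_P=\sum_QJ_QF_{P\cup Q}$, $B_P=\sum_QJ_QF_{P\cup\{h\}\cup Q}$) is a genuine one-variable slice function whose stem is holomorphic because $f\in\ker(\partial/\partial x_h^c)$; it then applies Lemma \ref{lem:laplaciano slice regular} blockwise --- this is why the paper records, in the Remark following that lemma, that it also holds for constant left multiples $ag$ --- using $\Delta_{m+1}(J_Pg_P)=J_P\Delta_{m+1}g_P$, and finally verifies that $\sum_PJ_P\frac{\partial}{\partial x}(g^{(y)}_P)'_s$ reassembles into $\frac{\partial}{\partial x_h}f'_{s,h}$, with $J$ again correctly positioned inside the products $[J_{K\cup\{h\}},\,\cdot\,]$. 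Your shortcut is valid only for $h=1$ (where $\mathcal{S}_1(\Omega_D)=\mathcal{S}(\Omega_D)$ and the restriction really is slice); for general $h$ the block decomposition is indispensable, and without it the base case --- and hence the whole theorem --- is not established.
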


\begin{proof}
First, let us prove \eqref{eq:Leplaciano 1 sev var}. For any fixed $y=(y_1,\dots,y_n)\in\Omega_D$, with $y_\ell=\alpha_l+i\beta_\ell$, set $w'=(\alpha_1+i\beta_1,\dots,\alpha_{h-1}+i\beta_{h-1})$ and $w''=(\alpha_{h+1}+i\beta_{h+1},\dots,\alpha_n+i\beta_n)$, then for any $x=\alpha+J\beta=\phi_J(z)$ we have
\begin{align*}
    f^{(y)}_h(x)&=\sum_{K\in\mathcal{P}(n)}[J_K,F_K(w',z,w'')]\\
    &=\sum_{\substack{P\in\mathcal{P}(h-1) \\ Q\subset\{h+1,\dots,n\}}}[J_{P\cup Q},F_{P\cup Q}(w',z,w'')]+[J_{P\cup\{h\}\cup Q},F_{P\cup\{h\}\cup Q}(w',z,w'')]\\
    &=\sum_{\substack{P=(p_1,\dots,p_s)\in\mathcal{P}(h-1) \\ Q=(q_1,\dots,q_t)\subset\{h+1,\dots,n\}}}J_{p_1}\dots J_{p_s}\left(J_{q_1}\dots J_{q_t}F_{P\cup Q}(w',z,w'')+JJ_{q_1}\dots J_{q_t}F_{P\cup\{h\}\cup Q}(w',z,w'')\right).
\end{align*}
Set $$g^{(y)}(\alpha+J\beta)=\sum_{\substack{P\in\mathcal{P}(h-1) \\ Q\subset\{h+1,\dots,n\}}}J_{q_1}\dots J_{q_t}F_{P\cup Q}(w',\alpha+i\beta,w'')+JJ_{q_1}\dots J_{q_t}F_{P\cup\{h\}\cup Q}(w',\alpha+i\beta,w'').$$ Since $f\in\ker(\partial/\partial x_h^c)$, $g$ is a slice regular function, so, by Lemma \ref{lem:laplaciano slice regular}, it holds
\begin{align*}
    \Delta_{m+1,h}f(y)&=\Delta_{m+1}(f^{(y)}_h)(x)=\sum_{P\in\mathcal{P}(h-1)}J_P\Delta_{m+1}g^{(y)}(x)=2(1-m)\sum_{P\in\mathcal{P}(h-1)}J_P\frac{\partial}{\partial x}(g^{(y)})'_s(x)\\
    &=2(1-m)\frac{\partial}{\partial x}\sum_{P\in\mathcal{P}(h-1)}J_P(g^{(y)})'_s(x)=2(1-m)\frac{\partial}{\partial x}(f^{(y)})'_s(x)=2(1-m)\frac{\partial}{\partial x_h}f'_{s,h}(x).
\end{align*}
    Equation \eqref{eq:Laplaciano slice reg sev var 1} follows from \eqref{eq. h-Laplaciano e derivata slice commutano su funzioni circolari} and \eqref{eq:Laplacianiderivatasfericasevvar1}, while \eqref{eq:Laplaciano slice reg sev var 2} follows from \eqref{eq. h-Laplaciano e derivata slice commutano su funzioni circolari} and \eqref{eq:Laplacianiderivatasfericasevvar2}.
\end{proof}

We can actually prove something more, namely a method to construct polyharmonic functions, starting by harmonic functions in the plane.
\begin{prop}
\label{prop method for constructing polyharmonic functions}
    Let $m$ be odd and let $F:D\subset\mathbb{R}\times\mathbb{R}^+\to \mathbb{R}_m$ be harmonic function, i.e. $\Delta_2F(a,b)=(\partial^2_a+\partial^2_b)F(a,b)=0$ and let $f:\Omega_D\subset\mathbb{R}^{m+1}\to\mathbb{R}_m$, $$f(a,x_1,\dots,x_m)\coloneqq \frac{1}{\sqrt{x_1^2+\dots x_m^2}}F\left(a,\sqrt{x_1^2+\dots x_m^2}\right).$$ Then, for any $k\in\mathbb{N}$ it holds
$$
\Delta_{m+1}^kf(a,x_1,\dots,x_m)=(m-3)\cdot...\cdot(m-2k-1)\sum_{j=1}^{k+1}a_j^{(k+1)}b^{j-2k-2}\partial_b^{j-1}F(a,b(x_1,\dots,x_m)),
$$
where $\Delta_{m+1}=\partial^2_a+\sum_{j=1}^m\partial^2_{x_j}$ is the Laplacian of $\mathbb{R}^{m+1}$.
In particular, $f$ is polyharmonic of degree ${\gamma_m}$, i.e.
$$\Delta^{{\gamma_m}}_{m+1}f=0.$$
\end{prop}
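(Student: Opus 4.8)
The plan is to exploit that $f$ is axially symmetric: it depends only on $a=x_0$ and $b=\sqrt{x_1^2+\dots+x_m^2}$, so on such functions the Euclidean Laplacian reduces to
\begin{equation*}
\Delta_{m+1}g=\partial_a^2 g+\partial_b^2 g+\frac{m-1}{b}\partial_b g,
\end{equation*}
the standard reduction of $\sum_{i=1}^m\partial_{x_i}^2$ to the radial variable $b$ in $\mathbb{R}^m$. Since every iterate $\Delta_{m+1}^k f$ stays axially symmetric, I would work entirely with this operator and argue by induction on $k$. The base case $k=0$ is the identity $\Delta_{m+1}^0 f=a_1^{(1)}b^{-1}F=b^{-1}F=f$, the empty product being $1$ and $a_1^{(1)}=1$.

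For the inductive step the key computation is the action of $\Delta_{m+1}$ on a single building block $b^c\partial_b^p F$. Using harmonicity $\partial_a^2F=-\partial_b^2F$, hence $\partial_a^2\partial_b^pF=-\partial_b^{p+2}F$, the highest-order term cancels and one is left with the clean two-term identity
\begin{equation*}
\Delta_{m+1}\left(b^c\partial_b^pF\right)=c(c+m-2)\,b^{c-2}\partial_b^pF+(2c+m-1)\,b^{c-1}\partial_b^{p+1}F.
\end{equation*}
Applying this term by term to the inductive hypothesis (with $c=j-2k-2$ and $p=j-1$) lowers the power of $b$ by two and shifts the derivative index by at most one, so both resulting families of terms fit the template $b^{j'-2(k+1)-2}\partial_b^{j'-1}F$ of the next order.

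It then remains to match the coefficients. Collecting the coefficient of $b^{j'-2(k+1)-2}\partial_b^{j'-1}F$ (with the convention $a_j^{(k)}=0$ for $j<1$ or $j>k$, forced by $1/(-1)!=0$, which automatically handles the boundary indices $j'=1$ and $j'=k+2$), the induction reduces to the purely combinatorial identity
\begin{equation*}
a_{j'}^{(k+1)}(j'-2k-2)(j'+m-2k-4)+a_{j'-1}^{(k+1)}(2j'+m-4k-7)=(m-2k-3)\,a_{j'}^{(k+2)},
\end{equation*}
the factor $(m-2k-3)$ being exactly what promotes the prefactor $(m-3)\cdots(m-2k-1)$ to $(m-3)\cdots(m-2k-3)$. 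Separating this into its $m$-linear and $m$-free parts splits it into $a_{j'}^{(k+2)}=(j'-2k-2)a_{j'}^{(k+1)}+a_{j'-1}^{(k+1)}$ together with a second relation which, modulo the first, is equivalent to $a_{j'}^{(k+1)}(j'-2k-2)(j'-1)=-2(j'-k-2)\,a_{j'-1}^{(k+1)}$; both follow by a direct manipulation of \eqref{eq definition coefficients ajk}, in the same spirit as Lemma \ref{lem:coefficienti}. This coefficient bookkeeping is the only delicate point; everything else is routine, if lengthy, differentiation.

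Finally, the polyharmonicity is immediate from the explicit formula: the prefactor $(m-3)\cdots(m-2k-1)$ contains the factor $m-2i-1$ for $i=1,\dots,k$, which vanishes at $i=\gamma_m=(m-1)/2$; hence for $k=\gamma_m$ (with $m\ge 3$) the whole expression is zero, giving $\Delta_{m+1}^{\gamma_m}f=0$. I note that the same conclusion can be reached more quickly by observing that $f=b^{-1}F$ has exactly the form of the spherical derivative $g'_s=\beta^{-1}F_1$ in Proposition \ref{Prop potenza laplaciano derivata sferica}, whose formula \eqref{eq 2 potenza laplaciano sferica} uses only the harmonicity of $F_1$: realizing $f$ locally as such a spherical derivative, by adjoining a local harmonic conjugate to an odd extension of $F$, reduces the statement to that proposition. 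I would nonetheless present the self-contained induction above as the main argument.
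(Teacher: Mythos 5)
Your proposal is correct and is essentially the paper's own proof: the paper also argues by induction on $k$, derives (in Cartesian coordinates, but equivalently) your two-term identity $\Delta_{m+1}\left(b^c\partial_b^pF\right)=c(c+m-2)b^{c-2}\partial_b^pF+(2c+m-1)b^{c-1}\partial_b^{p+1}F$ using $\partial_a^2F=-\partial_b^2F$, and then closes the induction with exactly the two combinatorial identities you isolate, namely $a_j^{(k+1)}=(j-2k)a_j^{(k)}+a_{j-1}^{(k)}$ and $a_j^{(k)}(j-1)(j-2k)+2(j-k-1)a_{j-1}^{(k)}=0$ (your $m$-linear/$m$-free splitting is the paper's splitting of the coefficients into an $(m-2k-1)$-part plus a telescoping remainder). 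Your final polyharmonicity observation via the vanishing factor $m-2\gamma_m-1=0$ is also how the paper concludes.
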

\begin{proof}
Note that for $k=0$ we get $f=\frac{1}{\sqrt{x_1^2+\dots x_m^2}}F$. Now, suppose by induction that for some $k$ it holds
$$
\Delta_{m+1}^{k-1}f=(m-3)\cdot...\cdot(m-2k+1)\sum_{j=1}^ka_j^{(k)}b^{j-2k}\partial_b^{j-1}F
$$
then let us compute $\partial^2_{x_i}(b^{j-2k}\partial_b^{j-1}F)$ for $i=1,...,n$. Recalling that 
$$
\partial_{x_i}(b^n)=nx_ib^{n-2},\qquad \partial_{x_i}F=x_ib^{-1}\partial_bF,
$$
we have
$$
\partial_{x_i}(b^{j-2k}\partial_b^{j-1}F)=(j-2k)x_ib^{j-2k-2}\partial_b^{j-1}F+x_ib^{j-2k-1}\partial_b^jF,
$$
and 
\begin{equation*}
    \begin{split}
\partial^2_{x_i}(b^{j-2k}\partial_b^{j-1}F)&=(j-2k)b^{j-2k-2}\partial_b^{j-1} F+(j-2k)(j-2k-2)x_i^2b^{j-2k-4}\partial_b^{j-1} F+\\
&+(j-2k)x_i^2b^{j-2k-3}\partial_b^j F+b^{j-2k-1}\partial_b^j F+\\
&+(j-2k-1)x_i^2b^{j-2k-3}\partial_b^j F+x_i^2b^{j-2k-2}\partial_b^{j+1}F.
\end{split}
\end{equation*}
Now, since $b^2=\sum_{i=1}^mx_i^2$
\begin{equation*}
    \begin{split}
&\sum_{i=1}^m\partial^2_{x_i}(b^{j-2k}\partial_b^{j-1} F)=m(j-2k)b^{j-2k-2}\partial_b^{j-1} F+(j-2k)(j-2k-2)b^{j-2k-2}\partial_b^{j-1} F+\\
&+(j-2k)b^{j-2k-1}\partial_b^j F+mb^{j-2k-1}\partial_b^j F+\\
&+(j-2k-1)b^{j-2k-1}\partial_b^j F+b^{j-2k}\partial_b^{j+1} F\\
&=(m+j-2k-2)(j-2k)b^{j-2k-2}\partial_b^{j-1} F+(m+2j-4k-1)b^{j-2k-1}\partial_b^j F+\\
&+b^{j-2k}\partial_b^{j+1} F
\end{split}
\end{equation*}
and by the harmonicity of $F$,
\begin{equation*}
    \begin{split}
&\Delta_{m+1}(b^{j-2k}\partial_b^{j-1} F)=\left(\partial_a^2+\sum_{i=1}^m\partial^2_{x_i}\right)(b^{j-2k}\partial_b^{j-1} F)\\
&=(m+j-2k-2)(j-2k)b^{j-2k-2}\partial_b^{j-1} F+(m+2j-4k-1)b^{j-2k-1}\partial_b^j F
+\\
&\ +b^{j-2k}\partial_b^{j-1} (\partial_b^2F+\partial_a^2F)\\
&=(m+j-2k-2)(j-2k)b^{j-2k-2}\partial_b^{j-1} F+(m+2j-4k-1)b^{j-2k-1}\partial_b^j F.
\end{split}
\end{equation*}
Let us split
$m+j-2k-2=m-2k-1+j-1$ and $m+2j-4k-1=m-2k-1+2j-2k$, so we have
\begin{equation*}
    \begin{split}
\Delta_{m+1}(b^{j-2k}\partial_b^{j-1} F)&=(m-2k-1)[(j-2k)b^{j-2k-2}\partial_b^{j-1} F+b^{j-2k-1}\partial_b^j F]+\\
&+(j-1)(j-2k)b^{j-2k-2}\partial_b^{j-1} F+2(j-k)b^{j-2k-1}\partial_b^j F
\end{split}
\end{equation*}
and considering the whole function $f$ we have
\begin{equation*}
    \begin{split}
\Delta_{m+1}^{k}f&=\Delta_{m+1}(\Delta_{m+1}^{k-1}f)=(m-3)\cdot...\cdot(m-2k+1)\sum_{j=1}^ka_j^{(k)}\Delta_{m+1}(b^{j-2k}\partial_b^{j-1} F)\\
&=(m-3)\cdot...\cdot(m-2k+1)(m-2k-1)\sum_{j=1}^ka_j^{(k)}[(j-2k)b^{j-2k-2}\partial_b^{j-1} F+b^{j-2k-1}\partial_b^j F]+\\
&+(m-3)\cdot...\cdot(m-2k+1)\sum_{j=1}^ka_j^{(k)}[(j-1)(j-2k)b^{j-2k-2}\partial_b^{j-1} F+2(j-k)b^{j-2k-1}\partial_b^j F].
\end{split}
\end{equation*}
Let us focus on the second sum and let us prove that it is actually zero. Indeed, we have
\begin{equation*}
    \begin{split}
&\sum_{j=1}^ka_j^{(k)}(j-1)(j-2k)b^{j-2k-2}\partial_b^{j-1} F+\sum_{j=1}^ka_j^{(k)}2(j-k)b^{j-2k-1}\partial_b^j F\\
&=\sum_{j=2}^ka_j^{(k)}(j-1)(j-2k)b^{j-2k-2}\partial_b^{j-1} F+\sum_{j=2}^ka_{j-1}^{(k)}2(j-k-1)b^{j-2k-2}\partial_b^{j-1} F\\
&=\sum_{j=2}^k[a_{j-1}^{(k)}2(j-k-1)+a_{j-1}^{(k)}2(j-k-1)]b^{j-2k-2}\partial_b^{j-1} F,
    \end{split}
\end{equation*}
but, by definition of $a_j^{(k)}$
\begin{equation*}
    \begin{split}
&a_{j-1}^{(k)}2(j-k-1)+a_{j-1}^{(k)}2(j-k-1)\\
&=\frac{(2k-j-1)!}{(j-1)!(k-j)!(-2)^{k-j}}(j-1)(j-2k)+\frac{(2k-j)!}{(j-2)!(k-j+1)!(-2)^{k-j+1}}2(j-k-1)\\
&=\frac{-(2k-j)!}{(j-2)!(k-j)!(-2)^{k-j}}+\frac{(2k-j)!}{(j-2)!(k-j)!(-2)^{k-j}}=0.
    \end{split}
\end{equation*}
So, finally
\begin{equation*}
    \begin{split}
    \Delta_{m+1}^{k}f
&=(m-3)\cdot...\cdot(m-2k+1)(m-2k-1)\sum_{j=1}^ka_j^{(k)}[(j-2k)b^{j-2k-2}\partial_b^{j-1} F+b^{j-2k-1}\partial_b^j F]\\
&=(m-3)\cdot...\cdot(m-2k-1)\left[\sum_{j=1}^{k+1}a_{j}^{(k)}(j-2k)b^{j-2k-2}\partial_b^{j-1} F+\sum_{j=1}^{k+1}a_{j-1}^{(k)}b^{j-2k-2}\partial_b^{j-1} F\right]\\
&=(m-3)\cdot...\cdot(m-2k-1)\sum_{j=1}^ka_j^{(k+1)}b^{j-2k-2}\partial_b^{j-1} F,
\end{split}
\end{equation*}
where we have used the property $a_j^{(k+1)}=a_{j-1}^{(k)}+(j-2k)a_j^{(k)}$ and that $a_j^{(k)}=0$ if $j\notin\{1,\dots, k\}$.
\end{proof}
\begin{oss}
    Note that, $f$ may not be a slice function. Indeed, consider $F(a,b)=a^4-6a^2b^2+b^4$, then 
    \begin{equation*}
f(x_0,x_1,x_2,x_3)=\frac{x_0^4}{\sqrt{x_1^2+x_2^2+x_3^2}}-6x_0^2\sqrt{x_1^2+x_2^2+x_3^2}+(x_1^2+x_2^2+x_3^2)^\frac{3}{2}=\frac{(x^4)^\circ_s}{\sqrt{x_1^2+x_2^2+x_3^2}}
    \end{equation*}is an harmonic function, which is not slice. This follows from the unicity of the stem function and that $f$ would be induced by $F$, which is not a Stem function, since it does not satisfy \eqref{eq defn stem functions}.
\end{oss}
\section{Almansi decomposition}
\subsection{Classical Almansi decomposition}
We recall the Classical Almansi Theorem \cite{AlmansiClassico}. 
\begin{thm}
\label{Classical Almansi Theorem}
    Let $f:D\subset\mathbb{R}^n\to\mathbb{R}$ be a polyharmonic function, i.e. $\Delta^p_nf=0$ for some $p$, in a star-like domain $D$ with centre $0$. Then, there exist unique harmonic functions $h_0,\dots h_{p-1}$ in $D$ such that 
    \begin{equation}
    \label{eq classical almansi decomposition}
f(x)=h_0(x)+|x|^2h_1(x)+\dots |x|^{2p-2}h_{p-1}(x)=\sum_{j=0}^{p-1}|x|^{2j}h_j(x).
    \end{equation}
\end{thm}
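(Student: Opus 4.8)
The plan is to induct on the polyharmonicity order $p$, reducing everything to a single lifting lemma that inverts $\Delta$ on products $|x|^{2k}\cdot(\text{harmonic})$. For $p=1$ the function $f$ is harmonic and we take $h_0=f$. For the inductive step, observe that $\Delta f$ is $(p-1)$-polyharmonic, so the inductive hypothesis yields harmonic functions $g_0,\dots,g_{p-2}$ with $\Delta f=\sum_{j=0}^{p-2}|x|^{2j}g_j$. If for each $j$ I can produce a harmonic $h_{j+1}$ with $\Delta\bigl(|x|^{2(j+1)}h_{j+1}\bigr)=|x|^{2j}g_j$, then $\phi:=\sum_{j=1}^{p-1}|x|^{2j}h_j$ satisfies $\Delta\phi=\Delta f$, so $h_0:=f-\phi$ is harmonic and $f=\sum_{j=0}^{p-1}|x|^{2j}h_j$, as desired.

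The heart of the argument is therefore the \emph{lifting lemma}: given a harmonic $g$ on the star-like domain $D$ and $k\ge1$, there is a harmonic $h$ on $D$ with $\Delta(|x|^{2k}h)=|x|^{2k-2}g$. Using the product rule together with $\Delta(|x|^{2k})=2k(2k+n-2)|x|^{2k-2}$ and $\nabla(|x|^{2k})=2k|x|^{2k-2}x$, one gets for harmonic $h$ the identity
\[
\Delta(|x|^{2k}h)=2k(2k+n-2)\,|x|^{2k-2}h+4k\,|x|^{2k-2}\,(x\cdot\nabla h),
\]
so the required equation is equivalent to the first-order transport equation $2k(2k+n-2)h+4k\,(x\cdot\nabla)h=g$. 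Writing $x\cdot\nabla=r\partial_r$ and integrating the resulting linear ODE along each ray emanating from $0$ (which stays in $D$ by star-likeness) with integrating factor $r^{(2k+n-2)/2}$ gives the explicit solution
\[
h(x)=\frac{1}{4k}\int_0^1 t^{(2k+n-4)/2}\,g(tx)\,dt,
\]
whose integrand is integrable at $t=0$ since $n\ge1$ and $k\ge1$. It then remains to verify that this $h$ is actually harmonic: applying $\Delta$ to the transport equation and using the commutation relation $[\Delta,\,x\cdot\nabla]=2\Delta$, one sees that $w:=\Delta h$ solves a homogeneous transport equation of the same type, whose only solution regular at the origin is $w\equiv0$.

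For uniqueness, suppose $\sum_{j=0}^{p-1}|x|^{2j}h_j\equiv0$ with each $h_j$ harmonic. Since $|x|^{2j}h_j$ is polyharmonic of order $j+1$, applying $\Delta^{p-1}$ annihilates every term with $j\le p-2$ and leaves $\Delta^{p-1}\bigl(|x|^{2(p-1)}h_{p-1}\bigr)=0$. Expanding $h_{p-1}$ into homogeneous harmonic components (valid on a ball about $0\in D$, hence on all of $D$ by real-analyticity and connectedness) and using that each application of $\Delta(|x|^{2k}\cdot)$ multiplies a degree-$\ell$ component by $2k(2k+n-2+2\ell)$, all factors being nonzero for $n\ge1$, we conclude $h_{p-1}=0$; descending on $p$ then forces $h_j=0$ for all $j$.

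I expect the lifting lemma to be the main obstacle: constructing $h$ by the ray integral on a \emph{general} star-like domain (rather than a ball, where one could simply expand in homogeneous harmonics), controlling the singularity of the integrand at the origin, and—most delicately—checking that the constructed $h$ is genuinely harmonic through the commutation relation. The nonvanishing of the arising constants in both the lifting step and the uniqueness argument, guaranteed by $n\ge1$, is the small arithmetic point on which the whole induction turns.
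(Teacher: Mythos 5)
Your proof is correct, and it is essentially the argument the paper relies on: the paper states Theorem \ref{Classical Almansi Theorem} without proof (deferring to the classical literature), and the only proof content it records is the inductive construction of the components in Remark \ref{oss:classicalAlmansi}, whose integral formula $h_j(x)=\frac{1}{4j}\int_0^1\xi^{\,j-2+n/2}g_{j-1}(\xi x)\,d\xi$ is exactly your lifting-lemma solution $h(x)=\frac{1}{4k}\int_0^1 t^{(2k+n-4)/2}g(tx)\,dt$ with $k=j$. What you add beyond the paper — the verification that the ray integral solves the transport equation $2k(2k+n-2)h+4k\,(x\cdot\nabla)h=g$, the harmonicity of $h$ via $[\Delta,\,x\cdot\nabla]=2\Delta$ and regularity at the origin, and the uniqueness via homogeneous harmonic expansions with nonvanishing constants $2k(2k+n-2+2\ell)$ — all checks out.
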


\begin{oss}\label{oss:classicalAlmansi}
    As shown in \cite{PolyharmonicFunctions}, the components $h_j$ are constructed inductively, in the following way. Suppose $g_0,\dots, g_{p-2}$ are the Almansi components of $\Delta_n f$, then for $j=1,\dots, p-1$
    \begin{equation}
\label{eq componenti generiche}
    h_j(x)=\frac{1}{4j}\int_0^1\xi^{j-2+\frac{n}{2}}g_{j-1}(\xi x)d\xi,\qquad,\text{ for }j=1,\dots,p-2
\end{equation}
and 
    \begin{equation*}
        h_0(x)=f(x)-\sum_{j=1}^{p-1}r^{2j}h_j(x),
    \end{equation*}
    where $r(x)=\sqrt{x_1^2+\dots+x_n^2}$.
\end{oss}

\begin{cor}
    Let $\Omega_D\subset\mathbb{R}^{m+1}$ and let $f$ be a slice function or a circular slice function. Then, the components of the Almansi decomposition are slice functions or circular slice functions, too.
\end{cor}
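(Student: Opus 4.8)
The plan is to argue by induction on the polyharmonic degree $p$, exploiting the explicit inductive construction of the Almansi components recalled in Remark \ref{oss:classicalAlmansi}. Throughout, I write $n=m+1$ and assume, as in Theorem \ref{Classical Almansi Theorem}, that $\Omega_D$ is circular and star-like with centre $0$ (so the real dilations $x\mapsto\xi x$, $\xi\in(0,1]$, map $\Omega_D$ into itself and send $\alpha+J\beta$ to $\xi\alpha+J\xi\beta$, preserving the slice $J$). The base case $p=1$ is immediate: a harmonic polyharmonic function has the single component $h_0=f$, which is slice (resp. circular) by hypothesis.

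The crucial preliminary step, and the main obstacle, is to show that $\Delta_{m+1}$ sends slice functions to slice functions and circular slice functions to circular slice functions. First I would write $f(\alpha+J\beta)=F_0(\alpha,\beta)+JF_1(\alpha,\beta)$ with $F_0$ even and $F_1$ odd in $\beta$, and split $\Delta_{m+1}=\partial_\alpha^2+\Delta_m$, where $\Delta_m=\sum_{i=1}^m\partial_{x_i}^2$ acts on the vector variables and $\beta=\sqrt{x_1^2+\dots+x_m^2}$. Treating $F_0(\alpha,\beta)$ as radial in the vector variables gives $\Delta_mF_0=\partial_\beta^2F_0+\tfrac{m-1}{\beta}\partial_\beta F_0$, which remains even in $\beta$, so the spherical-value part of $\Delta_{m+1}f$ is again of $F_0$-type. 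For the $JF_1$ part I would write $JF_1=\sum_i e_i x_i\,(F_1/\beta)$ and combine the product rule $\Delta_{m+1}(x_iG)=x_i\Delta_{m+1}G+2\partial_{x_i}G$ with the identities $\sum_i e_i x_i=\beta J$ and $\partial_{x_i}G=(x_i/\beta)\partial_\beta G$ to collect everything into a single factor $J\tilde F_1(\alpha,\beta)$, where $\tilde F_1=\beta\Delta_{m+1}G+2\partial_\beta G$ with $G=F_1/\beta$. Since $G$ is even in $\beta$, a parity check shows $\tilde F_1$ is odd; hence $\Delta_{m+1}f=\tilde F_0+J\tilde F_1$ is a slice function, and it is circular precisely when $F_1=0$, i.e. when $f$ is circular.

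With this closure property the induction proceeds smoothly. Given $f$ polyharmonic of degree $p$ and slice (resp. circular), the function $\Delta_{m+1}f$ is polyharmonic of degree $p-1$ and slice (resp. circular), so by the inductive hypothesis its Almansi components $g_0,\dots,g_{p-2}$ are slice (resp. circular). It then remains to verify that the operations in the construction of Remark \ref{oss:classicalAlmansi} preserve these classes: (i) the real dilation $x\mapsto g(\xi x)$ merely rescales the stem components to $G_0(\xi\alpha,\xi\beta)$, $G_1(\xi\alpha,\xi\beta)$, leaving their parities in $\beta$ unchanged; (ii) integration in the parameter $\xi\in[0,1]$ as in \eqref{eq componenti generiche} acts componentwise on the stem, and $\int_0^1\xi^cG_k(\xi\alpha,\xi\beta)\,d\xi$ keeps the parity of $G_k$; and (iii) multiplication by $r^{2j}=|x|^{2j}=(\alpha^2+\beta^2)^j$, which is a real-valued circular function and therefore multiplies a slice function into a slice function and a circular one into a circular one. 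Each operation sends slice to slice and circular to circular, so $h_1,\dots,h_{p-1}$ are slice (resp. circular); finally $h_0=f-\sum_{j=1}^{p-1}r^{2j}h_j$ is slice (resp. circular) as a real-linear combination of such functions, completing the induction.
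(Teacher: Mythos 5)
Your proof is correct and takes essentially the route the paper intends: the corollary is stated without a separate proof precisely because it follows from the inductive construction of the Almansi components in Remark \ref{oss:classicalAlmansi}, and your argument is the careful fleshing-out of that—checking that dilations $x\mapsto\xi x$, integration in the parameter $\xi$, multiplication by the real circular function $|x|^{2j}$, and subtraction all preserve sliceness and circularity. The one ingredient you had to supply explicitly, namely that $\Delta_{m+1}$ maps slice (resp.\ circular) slice functions to functions of the same type, is left implicit in the paper but is exactly the content of the computations in the proofs of Lemma \ref{lem:laplaciano slice regular} and Lemma \ref{lemma Laplaciano e derivata slice commutano su funzioni circolari}, and your parity check of the stem components $\tilde F_0$, $\tilde F_1$ establishes it correctly.
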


\begin{ex}
\label{example classical almansi x4}
    Let us find the Almansi decomposition of the slice regular function $f:\mathbb{R}^6\to\mathbb{R}_5$, defined by $$f(x)=x^4=\alpha^4-6\alpha^2\beta^2+\beta^4+J\beta(4\alpha^3-4\alpha\beta^2),$$
    for any $x=x_0+\sum_{\ell=1}^5x_\ell e_\ell$, with $\alpha=x_0$, $\beta=\sqrt{\sum_{\ell=1}^5x_\ell^2}$ and $J=(\sum_{\ell=1}^5x_\ell e_\ell)/\beta$. By Theorem \ref{theorem laplacian any order slice regular functions}, $f\in\ker\Delta^3_6$ and so $\Delta^2_6 f$ is harmonic, which means that $\Delta^2_6 f$ has trivial Almansi decomposition. Recalling that $f'_s(x)=4\alpha^3-4\alpha\beta^2$, by applying \eqref{eq formula laplaciano iterato 1} with $k=0,1$ we find 
    \begin{align*}
    \Delta_6f(x)&=-8\frac{\partial}{\partial x}(4\alpha^3-4\alpha\beta^2)=-8\mathcal{I}\left(\frac{\partial}{\partial z}(4\alpha^3-4\alpha\beta^2)\right)=-16(3\alpha^2-\beta^2+2\alpha\beta J),\\
        \Delta_6f(x)&=-16\frac{\partial}{\partial x}\left(\beta^{-1}\partial_\beta(4\alpha^3-4\alpha\beta^2)\right)=-16\mathcal{I}\left(\frac{\partial}{\partial z}(-8\alpha)\right)=64.
    \end{align*}
    Following Remark \ref{oss:classicalAlmansi} we find the Almansi decomposition for $\Delta_6 f=h_0(x)+|x|^2h_1(x)\in \ker\Delta_6^2$:
    \begin{align*}
        h_1(x)&=\frac{1}{4}\int_0^1\xi^{1-2+3}\Delta_6^2f(\xi x)d\xi=\frac{16}{3}\\
h_0(x)&=\Delta_6f(x)-(\alpha^2+\beta^2)
h_1(x)=-\frac{160}{3}\alpha^2+\frac{32}{3}\beta^2-32\alpha\beta J.
    \end{align*}
    Now, let us set $g_0\coloneqq h_0$ and $g_1\coloneqq h_1$ and let us find the Almansi decomposition of $f(x)=h_0(x)+|x|^2h_1(x)+|x|^4h_2(x)$:
    \begin{equation*}
h_2(x)=\frac{1}{8}\int_0^1\xi^2-2+3g_1(\xi x)d\xi=\frac{1}{6},
    \end{equation*}
    \begin{equation*}
h_1(x)=\frac{1}{4}\int_0^1\xi^{1-2+3}g_0(\xi x)d\xi=-\frac{8}{3}\alpha^2+\frac{8}{15}\beta^2-\frac{8}{5}\alpha\beta J
    \end{equation*}
    and finally,
    \begin{equation*}
\begin{split}
    h_0(x)&=f(x)-|x|^2h_1(x)-|x|^4h_2(x)=\frac{7}{2}\alpha^4-\frac{21}{5}\alpha^2\beta^2
+\frac{3}{10}\beta^4+\beta J\left(\frac{28}{5}\alpha^3-\frac{12}{5}\alpha\beta^2\right).      \end{split}
    \end{equation*}
    Note that $h_0,h_1,h_2$ are slice functions.
\end{ex}
\begin{oss}
    The components of the Almansi decomposition of a polynomial can be obtained also through the so called Gauss or canonical decomposition. Indeed, the components of a homogeneous polynomial $p_n$ of degree $n$ are given by \cite[\S 2.1]{avery2017hyperspherical}
    \begin{equation*}
h_k(x)=\frac{(m+2n-4k-1)!!}{(2k)!!(m+2n-2k-1)!!}\sum_{j=0}^{\lfloor \frac{n}{2}-k\rfloor}\frac{(-1)^j(m+2n-4k-2j-3)!!}{(2j)!!(m+2n-2k-3)!!}|x|^{2j}\Delta_{m+1}^{j+k}(p_n),
    \end{equation*}
    with $k=1,\dots, \frac{m+1}{2}$.
\end{oss}

\subsection{Slice-Almansi decomposition in several variables}
The classical Almansi decomposition allows to reduce a polyharmonic function into a combination of harmonic functions. A similar machinery is provided in the context of quaternionic slice analysis \cite{Almansi}: slice regular functions (which are biharmonic) are combination of spherical derivatives (which are harmonic). These decomposition are called Almansi-type. Similar decompositions are then given for slice functions in one Clifford variable \cite{AlmansiPerottiClifford} and for several quaternionic variables \cite{almansiseveralquaternion}.
We now extend the Almansi-type decompositions to slice functions in several Clifford variables. We assume $\Omega_D\subset(\mathbb{R}^{m+1})^n$ open symmetric set.
\begin{defn}
\label{Definizione S}
Let $f\in\mathcal{S}(\Omega_D)$ and $H\in\mathcal{P}(n)$. For every $K=\{k_1,...,k_p\}\subset H$, with $k_1<...<k_p$, define over $\Omega_{D_H}$ the slice functions
\begin{equation*}
    \mathcal{S}^H_K(f):=\left(x_K\odot f\right)'_{s,H}=\left(\prod_{i=1}^px_{k_i}\odot f\right)'_{s,H},
\end{equation*}
and set $\mathcal{S}^\emptyset_\emptyset(f):=f$. If $H=\llbracket m\rrbracket:=\{1,2,...,m\}$ is an integers interval from 1 to some $m\in\{1,...,n\}$, we can write $\forall K\in\mathcal{P}(m)$
\begin{equation*}
    \mathcal{S}^{\llbracket m\rrbracket}_K(f):=(x_m^{\chi_K(m)}\dots(x_1^{\chi_K(1)}f)'_{s,1}\dots)'_{s,m},
    \end{equation*}
    where $\chi_K$ is the characteristic function of the set $K$. 
Note that, in this case, we can use the ordinary pointwise product as well as the slice product \cite[Proposition 2.52]{Several}.
If $f=\mathcal{I}(F)$, every $\mathcal{S}^H_K(f)$ is induced by the stem function
\begin{equation*}
    G^H_K(F):=\left(Z_K\otimes F\right)'_H,
\end{equation*}
where $Z_j\in Stem(\mathbb{C}^n)$ is the stem function $Z_j(\alpha_1+i\beta_1,...,\alpha_n+i\beta_n):=\alpha_j+e_j\beta_j$, inducing the monomial $x_j$, for any $j=1,...,n$.
\end{defn}


\begin{thm}
\label{Teorema principale}
Let $f\in\mathcal{S}(\Omega_D)$ and fix any $H\in\mathcal{P}(n)$, then
\begin{enumerate}
    \item we can decompose $f$ as
    \begin{equation}
    \label{Formula decomposizione di Almansi}
f(x)=\displaystyle\sum_{K\subset H}(-1)^{|H\setminus K|}\left(\overline{x}\right)_{H\setminus K}\odot\mathcal{S}^H_K(f)(x),
    \end{equation}
    where $\left(\overline{x}\right)_T=\prod_{j=1}^s\overline{x}_{t_j}$, if $T=\{t_1,\dots,t_s\}$, with $1\leq t_1<\dots<t_s\leq n$. If $H=\llbracket m\rrbracket$, \eqref{Formula decomposizione di Almansi} becomes
    \begin{equation}
    \label{Formula decomposizione di Almansi ordinata}
f(x)=\displaystyle\sum_{K\in\mathcal{P}(m)}(-1)^{|K^c|}\left(\overline{x}\right)_{K^c}\mathcal{S}^{\llbracket m\rrbracket}_K(f)(x),
    \end{equation}
    with $K^c=\{1,\dots,m\}\setminus K$.
    \item if $H\neq\llbracket n\rrbracket$,  $\mathcal{S}^H_K(f)\in\mathcal{S}_{c,H}(\Omega_{D_H})\cap\mathcal{S}_p(\Omega_{D_H})$, for every $ K\subset H$, where $p=\min H^c$, while $\mathcal{S}^{\llbracket n\rrbracket}_K\in\mathcal{S}_{c,\llbracket n\rrbracket}(\Omega_{D_{\llbracket n\rrbracket}})$, for every $ K\in\mathcal{P}(n)$;
    \item suppose $f\in\mathcal{S}\mathcal{R}(\Omega_D)$, then  $\Delta^{\gamma_m}_{m+1,h}\mathcal{S}^H_K(f)=0$, $\forall h\in H$, $\forall K\subset H$;

\item $f\in\mathcal{S}\mathcal{R}(\Omega_D)$ if and only if $\mathcal{S}^H_K(f)\in\mathcal{S}\mathcal{R}_p(\Omega_{D_H})$, $\forall H\in\mathcal{P}(n)\setminus\{1,\dots,n\}$, $K\subset H$, $p=\min H^c$;
\item $f\in\mathcal{S}_\mathbb{R}(\Omega_D)$ if, and only if, $\mathcal{S}^{\llbracket n\rrbracket}_K(f)$ is real valued, $\forall K\in\mathcal{P}(n)$.
\end{enumerate}

\end{thm}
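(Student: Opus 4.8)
The plan is to derive the whole theorem from one elementary one-variable identity together with the partial-spherical calculus of Proposition \ref{proposizione proprieta derivata sferica}. The cornerstone is that for every $g\in\mathcal{S}(\Omega_D)$ and every index $h$ one has the one-variable Almansi identity
\[
g=(x_h\odot g)'_{s,h}-\overline{x}_h\odot g'_{s,h}.
\]
Indeed, the Leibniz rule (Proposition \ref{proposizione proprieta derivata sferica}(8)) together with the values $(x_h)'_{s,h}=1$ and $(x_h)^\circ_{s,h}=\operatorname{Re}(x_h)=\alpha_h$ gives $(x_h\odot g)'_{s,h}=g^\circ_{s,h}+\alpha_h\odot g'_{s,h}$, and since $\alpha_h-\overline{x}_h=\operatorname{Im}(x_h)$ the right-hand side collapses, by the decomposition in Proposition \ref{proposizione proprieta derivata sferica}(7), to $g^\circ_{s,h}+\operatorname{Im}(x_h)\odot g'_{s,h}=g$.

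For claim (1) I would induct on $|H|$, peeling off an index $h$ from $H=H'\cup\{h\}$ with $h\notin H'$. Two bookkeeping facts are needed, both consequences of Proposition \ref{proposizione proprieta derivata sferica}(8): multiplication by $x_h$ commutes with $'_{s,H'}$ when $h\notin H'$ (since $(x_h)'_{s,h'}=0$ and $(x_h)^\circ_{s,h'}=x_h$ for $h'\neq h$), and $'_{s,h}$ composed with $'_{s,H'}$ equals $'_{s,H}$ (immediate from the stem-level formula $F'_H=\sum_{K\subset H^c}e_K\beta_H^{-1}F_{K\cup H}$). Using in addition that the slice functions $x_j$ and $\overline{x}_j$ commute under $\odot$ (their inducing stem functions have real $\mathbb{R}_m$-components), applying the one-variable identity to $g=\mathcal{S}^{H'}_{K'}(f)$ in the variable $h$ yields the recursion
\[
\mathcal{S}^{H'}_{K'}(f)=\mathcal{S}^{H}_{K'\cup\{h\}}(f)-\overline{x}_h\odot\mathcal{S}^{H}_{K'}(f).
\]
Substituting this into the inductive formula for $H'$ and matching the signs and conjugate factors (via $|H\setminus(K'\cup\{h\})|=|H'\setminus K'|$ and $|H\setminus K'|=|H'\setminus K'|+1$) reproduces \eqref{Formula decomposizione di Almansi}; the ordered identity \eqref{Formula decomposizione di Almansi ordinata} is the specialization $H=\llbracket m\rrbracket$, where $\odot$ may be replaced by the pointwise product.

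Claims (2), (3), (5) are then short. Claim (2) is Proposition \ref{proposizione proprieta derivata sferica}(1) applied to the slice function $x_K\odot f$, since $\mathcal{S}^H_K(f)=(x_K\odot f)'_{s,H}$ is a partial spherical $H$-derivative. For claim (3), if $f$ is slice regular then so is $x_K\odot f$ (the $\odot$-product of holomorphic stem functions is holomorphic), hence $x_K\odot f\in\ker(\partial/\partial x_h^c)$; by Proposition \ref{proposizione proprieta derivata sferica}(6) the spherical derivatives in the variables of $H\setminus\{h\}$ preserve this kernel, so $\mathcal{S}^H_K(f)=\big((x_K\odot f)'_{s,H\setminus\{h\}}\big)'_{s,h}$ is the $x_h$-spherical derivative of a function in $\ker(\partial/\partial x_h^c)$, and $\gamma_m$-polyharmonicity in $x_h$ follows from Proposition \ref{Prop:iteratedLaplaciansevvar}. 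For claim (5), at the stem level $\mathcal{S}^{\llbracket n\rrbracket}_K(f)$ is induced by $\beta_{\llbracket n\rrbracket}^{-1}(Z_K\otimes F)_{\llbracket n\rrbracket}$; expanding this top component and inducting on $|K|$ shows that the reality of all $\mathcal{S}^{\llbracket n\rrbracket}_K(f)$ is equivalent, triangularly, to the reality of all components $F_J$, i.e.\ to $f\in\mathcal{S}_\mathbb{R}(\Omega_D)$.

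Claim (4) is where the real work lies. The forward implication is easy: for $f\in\mathcal{S}\mathcal{R}(\Omega_D)$ the function $\mathcal{S}^H_K(f)$ lies in $\ker(\partial/\partial x_p^c)$ by Proposition \ref{proposizione proprieta derivata sferica}(6) (as $p=\min H^c\notin H$) and is slice in $x_p$ by claim (2), so it is slice regular in $x_p$ by \eqref{Equazione caratterizzazione slice regular H}. The converse is the main obstacle: from regularity of all components one must recover $f\in\mathcal{S}\mathcal{R}(\Omega_D)$. I would route this through the one-variable characterization (Theorem \ref{thm one variable characterization}), which requires $f\in\mathcal{S}\mathcal{R}_1$ — exactly the hypothesis for $H=\emptyset$ — together with $\mathcal{D}^h_K(f)\in\mathcal{S}\mathcal{R}_{h+1}$ for $h\leq n-1$. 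The bridge is the Leibniz relation $g^\circ_{s,l}=(x_l\odot g)'_{s,l}-\alpha_l\odot g'_{s,l}$, which expands each truncated derivative as
\[
\mathcal{D}^h_K(f)=\sum_{S\subset\{1,\dots,h\}\setminus K}\Big(\prod_{l\in(\{1,\dots,h\}\setminus K)\setminus S}(-\alpha_l)\Big)\,\mathcal{S}^{\llbracket h\rrbracket}_S(f),
\]
an invertible triangular change of basis whose coefficients are real and independent of $x_{h+1}$. Since $\mathcal{S}\mathcal{R}_{h+1}(\Omega_D)$ is a linear space stable under the slice product with such functions, the hypothesis $\mathcal{S}^{\llbracket h\rrbracket}_S(f)\in\mathcal{S}\mathcal{R}_{h+1}$ (the case $H=\llbracket h\rrbracket$, $h\leq n-1$) forces $\mathcal{D}^h_K(f)\in\mathcal{S}\mathcal{R}_{h+1}$, and Theorem \ref{thm one variable characterization} concludes. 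The delicate points are justifying the triangular expansion and verifying that the various partial operations genuinely commute across distinct variables (and that the domains $\Omega_{D_H}$ match up), which is where I expect most of the care to be required.
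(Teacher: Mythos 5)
Your proposal is correct and takes essentially the same route as the paper: your argument for point 3 (kernel preservation via Proposition \ref{proposizione proprieta derivata sferica}(6), the Leibniz rule, then Proposition \ref{Prop:iteratedLaplaciansevvar}) is exactly the paper's own proof, while for points 1, 2, 4, 5 the paper simply cites the quaternionic several-variables Almansi paper, whose arguments are precisely the ones you reconstruct — the identity $g=(x_h\odot g)'_{s,h}-\overline{x}_h\odot g'_{s,h}$ plus induction on $|H|$ for the decomposition, Proposition \ref{proposizione proprieta derivata sferica}(1) for circularity, and the triangular change of basis to the truncated spherical derivatives combined with Theorem \ref{thm one variable characterization} for the regularity characterization. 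The remaining delicate points you flag (commutation of operations in distinct variables, density of $\Omega_{D_H}$ in the natural domains) are genuine but routine, and are handled the same way in the cited source, so there is no gap.
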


\begin{proof}[Proof of Theorem \ref{Teorema principale}]
All the claims of the Theorem, but point $3$ follows the proof of \cite[Theorem 3.1]{almansiseveralquaternion}. Let us prove point $3$.
 Write $\mathcal{S}^H_K(f)=\left(x_h^{\chi_K(h)}\odot\mathcal{S}^{H\setminus\{h\}}_{K\setminus\{h\}}(f)\right)'_{s,h}$. By hypothesis, $f\in\ker(\partial/\partial x_h^c)$, then, by Proposition \ref{proposizione proprieta derivata sferica} (6) $\mathcal{S}^{H\setminus\{h\}}_{K\setminus\{h\}}(f)\in\ker(\partial/\partial x_h^c)$ and thanks to Leibniz formula \cite[Proposition 3.25]{Several}, $x_h^{\chi_K(h)}\odot\mathcal{S}^{H\setminus\{h\}}_{K\setminus\{h\}}(f)\in\ker(\partial/\partial x_h^c)$. Finally, by Proposition \ref{Prop:iteratedLaplaciansevvar}, $$\Delta_{m+1,h}^{\gamma_m}\mathcal{S}^H_K(f)=\Delta_{m+1,h}^{\gamma_m}\left(x_h^{\chi_K(h)}\odot\mathcal{S}^{H\setminus\{h\}}_{K\setminus\{h\}}(f)\right)'_{s,h}=0.$$

\end{proof}

\begin{oss}
    For any $H\in\mathcal{P}(n)$ we can define the linear operator
\begin{equation*}
    \mathcal{S}^H:\mathcal{S}(\Omega_D)\ni f\mapsto\left\{\mathcal{S}^H_K(f)\right\}_{K\subset H}\in\left(\mathcal{S}_{c,H}(\Omega_{D_H})\cap\mathcal{S}_p(\Omega_{D_H})\right)^{2^{|H|}},
\end{equation*}
where $p:=\min H^c$. whoose restriction to slice regular functions is
\begin{equation*}
    \mathcal{S}^H|_{\mathcal{SR}(\Omega_D)}:\mathcal{SR}(\Omega_D)\to\left(\mathcal{S}_{c,H}(\Omega_{D_H})\cap\mathcal{SR}_p(\Omega_{D_H}\cap\bigcap_{h\in H}\ker\Delta^{\gamma_m}_{m+1,h})\right)^{2^{|H|}},
\end{equation*}
\end{oss}
We give additional details about slice-Almansi decomposition, such as uniqueness of the components and a new one-variable characterization of slice regularity, with the following

\begin{prop}\label{prop:3claims}
    Let $f\in\mathcal{S}(\Omega_D)$ and fix $H\in\mathcal{P}(n)$, $h\in\{1,\dots,n\}$. 
    \begin{enumerate}
        \item Suppose there exist functions $\{h_K\}_{K\subset H}\subset\mathcal{S}_{c,H}(\Omega_D)$ such that
    \begin{equation*}
f(x)=\displaystyle\sum_{K\subset H}(-1)^{|H\setminus K|}\left(\overline{x}\right)_{H\setminus K}\odot h_K(x).
    \end{equation*}
Then $h_K=\mathcal{S}^H_K(f)$.
\item $f\in\mathcal{S}\mathcal{R}(\Omega_D)$ if and only if $\mathcal{S}^{\llbracket m\rrbracket}_K(f)\in\mathcal{S}\mathcal{R}_{m+1}(\Omega_D)$, $\forall m=0,...,n-1$, $K\in\mathcal{P}(m)$. 
\item Suppose $f\in\mathcal{S}_h(\Omega_D)$, then for every $K\in\mathcal{P}(h-1)$, $K\neq\{1,...,h-1\}$, it holds $\mathcal{S}^{\llbracket h\rrbracket}_K(f)=0$.
    In particular, \eqref{Formula decomposizione di Almansi ordinata} for $m=h$ reduces to
    \begin{equation*}
f=\sum_{K\in\mathcal{P}(h-1)}(-1)^{|K^c|}\overline{x}_{K^c}\mathcal{S}^{\llbracket h\rrbracket}_{K\cup\{h\}}(f)-\overline{x}_h\mathcal{S}^{\llbracket h\rrbracket}_{\llbracket h-1\rrbracket}(f).
    \end{equation*}
    \end{enumerate}
\end{prop}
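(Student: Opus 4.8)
My plan is to dispatch the three claims separately; the common engine throughout is the Leibniz rule for the spherical derivative (Proposition \ref{proposizione proprieta derivata sferica}(8)) together with the elementary identities $(x_h)'_{s,h}=1$, $(\overline{x}_h)'_{s,h}=-1$ and $(x_h)^\circ_{s,h}=(\overline{x}_h)^\circ_{s,h}=\operatorname{Re}(x_h)=\alpha_h$.

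For claim (1) I would apply the operator $\mathcal{S}^H_{K_0}=(x_{K_0}\odot\,\cdot\,)'_{s,H}$ to the assumed decomposition and check that it isolates $h_{K_0}$. Because spherical $x_h$-derivatives in distinct variables commute and act on separate variables, $\mathcal{S}^H_{K_0}$ factors as the composition over $h\in H$ of the one-variable operation \textquotedblleft multiply by $x_h$ if $h\in K_0$, then apply ${}'_{s,h}$\textquotedblright. Processing one variable at a time, and using that in the $K$-term every factor except the one in $x_h$ (each $h_K\in\mathcal{S}_{c,H}$ included) is circular in $x_h$, the Leibniz rule yields, for $c$ circular in $x_h$, the transfer rules $(c)'_{s,h}=0$, $(x_h\odot c)'_{s,h}=c$, $(\overline{x}_h\odot c)'_{s,h}=-c$ and $(x_h\odot\overline{x}_h\odot c)'_{s,h}=0$ (the last since $x_h\odot\overline{x}_h=\alpha_h^2+\beta_h^2$ is circular). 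Comparing membership of each $h$ in $K$ and in $K_0$, the $K$-term survives only when $h\in K\iff h\in K_0$ for all $h$, i.e.\ only for $K=K_0$; the surviving term acquires a sign $(-1)^{|H\setminus K_0|}$ from the transfers, cancelling the sign $(-1)^{|H\setminus K_0|}$ already present, so $\mathcal{S}^H_{K_0}(f)=h_{K_0}$.

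Claim (2) is the hard part, and I would reduce it to the one-variable characterisation of Theorem \ref{thm one variable characterization}, which is stated through the truncated spherical derivatives $\mathcal{D}^m_{\chi_K}$. Expanding $\mathcal{S}^{\llbracket m\rrbracket}_K(f)=\bigl[\prod_{j\in\llbracket m\rrbracket\setminus K}{}'_{s,j}\bigr]\bigl[\prod_{j\in K}({}^\circ_{s,j}+\alpha_j\,{}'_{s,j})\bigr]f$ by the same Leibniz computation as in (1), I would establish the linear relation
$$\mathcal{S}^{\llbracket m\rrbracket}_K(f)=\sum_{L\supseteq\llbracket m\rrbracket\setminus K}\Bigl(\prod_{j\in L\cap K}\alpha_j\Bigr)\mathcal{D}^m_L(f),$$
whose minimal term $L=\llbracket m\rrbracket\setminus K$ has coefficient $1$. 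Under the bijection $K\mapsto\llbracket m\rrbracket\setminus K$ this exhibits $\{\mathcal{S}^{\llbracket m\rrbracket}_K(f)\}_K$ as a transform of $\{\mathcal{D}^m_L(f)\}_L$ that is unitriangular for inclusion, with coefficients polynomial in $\alpha_1,\dots,\alpha_m$; hence the transform is invertible with coefficients of the same kind. Since $\alpha_1,\dots,\alpha_m$ are slice-preserving and constant in $x_{m+1}$, the set $\mathcal{S}\mathcal{R}_{m+1}$ is closed under $\odot$-multiplication by them (via the Leibniz rule for $\partial/\partial x_{m+1}^c$), so \textquotedblleft $\mathcal{S}^{\llbracket m\rrbracket}_K(f)\in\mathcal{S}\mathcal{R}_{m+1}$ for all $K$\textquotedblright\ is equivalent to \textquotedblleft $\mathcal{D}^m_L(f)\in\mathcal{S}\mathcal{R}_{m+1}$ for all $L$\textquotedblright. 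As the case $m=0$ reads $\mathcal{S}^{\llbracket 0\rrbracket}_\emptyset(f)=f\in\mathcal{S}\mathcal{R}_1$, Theorem \ref{thm one variable characterization} applied for $m=0,\dots,n-1$ then gives both implications. The main obstacle is precisely proving the displayed relation and its unitriangular shape cleanly, and verifying the module closure of $\mathcal{S}\mathcal{R}_{m+1}$.

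For claim (3) I would work with stem functions. By Definition \ref{Definizione S}, $\mathcal{S}^{\llbracket h\rrbracket}_K(f)=\mathcal{I}\bigl((Z_K\otimes F)'_{\llbracket h\rrbracket}\bigr)$, which vanishes precisely when the components $(Z_K\otimes F)_{L\cup\llbracket h\rrbracket}$ with $L\subset\{h+1,\dots,n\}$ all vanish. Writing $Z_K=\sum_{S\subset K}c_S\,e_S$ with $c_S$ a monomial in the $\alpha_k,\beta_k$ ($k\in K$), the product rule gives $(Z_K\otimes F)_M=\sum_{S\subset K}\pm c_S\,F_{M\setminus S}$ for $M\supseteq\llbracket h\rrbracket$, using $S\subset K\subseteq\{1,\dots,h-1\}\subset M$. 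For every such $M$ and $S$ the index $N:=M\setminus S$ contains $h$ and, since $K\subsetneq\{1,\dots,h-1\}$, also contains $\{1,\dots,h-1\}\setminus K\neq\emptyset$; thus $h\in N$ and $N\cap\{1,\dots,h-1\}\neq\emptyset$, whence $F_N=0$ by the stem characterisation of $\mathcal{S}_h(\Omega_D)$. Hence all relevant components vanish and $\mathcal{S}^{\llbracket h\rrbracket}_K(f)=0$; the hypothesis $K\neq\{1,\dots,h-1\}$ is exactly what forces $\{1,\dots,h-1\}\setminus K\neq\emptyset$ (for $K=\{1,\dots,h-1\}$ the choice $S=K$ gives $N=\{h\}$, on which $F$ need not vanish). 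Substituting these vanishings into \eqref{Formula decomposizione di Almansi ordinata} with $m=h$ eliminates every $K\in\mathcal{P}(h-1)$ but $K=\{1,\dots,h-1\}$, and regrouping the survivors according to whether $h\in K$ produces the stated reduced formula.
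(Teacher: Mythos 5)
Your proof is correct, but it takes a genuinely different route from the paper: the paper's entire proof of Proposition \ref{prop:3claims} is a deferral --- it observes that the three claims do not depend on the underlying algebra and cites the corresponding quaternionic results (\cite[Propositions 3.2, 3.4 and 4.3]{almansiseveralquaternion}), whereas you reconstruct full arguments from the toolkit of the present paper. Concretely, for claim (1) you derive the transfer rules $(c)'_{s,h}=0$, $(x_h\odot c)'_{s,h}=c$, $(\overline{x}_h\odot c)'_{s,h}=-c$, $(x_h\odot\overline{x}_h\odot c)'_{s,h}=0$ from the Leibniz rule of Proposition \ref{proposizione proprieta derivata sferica}(8) and circularity; for claim (2) you exhibit an explicit unitriangular (hence invertible, over the commutative ring generated by $\alpha_1,\dots,\alpha_m$) change of basis between $\{\mathcal{S}^{\llbracket m\rrbracket}_K(f)\}_K$ and the truncated spherical derivatives $\{\mathcal{D}^m_L(f)\}_L$, and then invoke Theorem \ref{thm one variable characterization}; for claim (3) you argue at the stem-function level using the characterization of $\mathcal{S}_h(\Omega_D)$ and the product rule $e_S\otimes e_N=(-1)^{|S\cap N|}e_{S\Delta N}$. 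What your route buys: it actually substantiates the paper's unproved assertion that the quaternionic proofs transfer, since every step you use (associativity, reality and centrality of $\alpha_j,\beta_j$, the Leibniz rules, Theorem \ref{thm one variable characterization}) is available verbatim in the Clifford setting; moreover, your triangular-transform argument makes precise the paper's informal remark that point 2 merely ``resembles'' the characterization via truncated spherical derivatives, by showing the two families of conditions are literally equivalent. What the paper's route buys is brevity. Two steps in your write-up carry real weight and should be stated as explicit lemmas if this were written out: the commutation across distinct variables of the operators ${}'_{s,j}$, ${}^\circ_{s,j}$ and of multiplication by $\alpha_j$ (needed for the factorizations in claims (1) and (2)), and the closure of $\mathcal{S}\mathcal{R}_{m+1}(\Omega_D)$ under addition and under multiplication by the $\alpha_j$ (needed to run the inversion in claim (2)); both follow from the Leibniz rules you cite, so these are presentational rather than mathematical gaps.
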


\begin{proof}
    All the claims do not depend on the algebra in which the functions take values, so the proof is the same of the ones in the quaternionic setting, see \cite[Propositions 3.2, 3.4 and 4.3]{almansiseveralquaternion}
\end{proof}

Point 2 of Proposition \ref{prop:3claims} resembles the one-variable characterization of slice regularity given in Theorem \ref{thm one variable characterization}, in which iterations of spherical values and spherical derivatives (also referred as truncated spherical derivatives $\mathcal{D}_\epsilon(f)$) have been used.

\subsection{Simultaneous Almansi decompositions}

In this subsection we assume $\Omega_D\subset(\mathbb{R}^{m+1})^n$ a star-like w.r.t any variable, open and circular set.
\begin{cor}
\label{cor Almansi clifford several}
    Let $f\in\mathcal{SR}(\Omega_D)$ and for $H\in\mathcal{P}(n)$, let
    \begin{equation*}
f(x)=\displaystyle\sum_{K\subset H}(-1)^{|H\setminus K|}\left(\overline{x}\right)_{H\setminus K}\odot\mathcal{S}^H_K(f)(x)
    \end{equation*}
    be the Almansi-type decomposition of $f$ with respect to $H$. Then, for any $G\subset H$, we can further decompose 
    \begin{equation}
    \label{eq further Alamsi Rm}
f(x)=\sum_{K\subset H}\sum_{\substack{T\in\llbracket0,\frac{m-3}{2}\rrbracket^{|G|},\\T=(t_1,\dots,t_{|G|})}}(-1)^{|H\setminus K|}|x_G|^{2T}\left(\overline{x}\right)_{H\setminus K}\odot\mathcal{E}^{H,G}_{K,T}(f)(x),
    \end{equation}
    with $\mathcal{E}^{H,G}_{K,T}(f)(x)\in\ker\Delta_{m+1,G}$, where 
    \begin{equation*}
|x_G|^{2T}\coloneqq|x_{g_1}|^{2t_1}\cdot\dots\cdot|x_{g_s}|^{2t_s},
    \end{equation*}
    if $G=(g_1,\dots,g_s)$ and $T=(t_1,\dots,t_s)$. 
\end{cor}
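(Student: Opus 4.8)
The plan is to iterate two decompositions already at our disposal: the $H$-adapted Almansi-type decomposition of Theorem \ref{Teorema principale} and the classical Almansi Theorem \ref{Classical Almansi Theorem}, the latter applied one variable at a time to the components produced by the former.

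First I would invoke Theorem \ref{Teorema principale}(1) to write $f=\sum_{K\subset H}(-1)^{|H\setminus K|}(\overline{x})_{H\setminus K}\odot\mathcal{S}^H_K(f)$, and recall from Theorem \ref{Teorema principale}(3) that, since $f\in\mathcal{SR}(\Omega_D)$, each component satisfies $\Delta^{\gamma_m}_{m+1,h}\mathcal{S}^H_K(f)=0$ for every $h\in H$; in particular each $\mathcal{S}^H_K(f)$ is $\gamma_m$-polyharmonic in every variable $x_g$ with $g\in G\subset H$. Because $\Omega_D$ is assumed star-like with respect to each variable, the classical Almansi theorem is applicable to the one-variable restrictions.

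Next I would decompose each $\mathcal{S}^H_K(f)$ variable by variable. Fixing an ordering $G=\{g_1,\dots,g_s\}$, I apply Theorem \ref{Classical Almansi Theorem} in the single slot $x_{g_1}\in\mathbb{R}^{m+1}$, treating the remaining variables as parameters: $\gamma_m$-polyharmonicity in $x_{g_1}$ yields $\gamma_m=\frac{m-1}{2}$ summands weighted by $|x_{g_1}|^{2t_1}$ with $t_1\in\llbracket0,\frac{m-3}{2}\rrbracket$, each harmonic in $x_{g_1}$, and the Corollary following Theorem \ref{Classical Almansi Theorem} ensures these summands stay circular. Iterating the same procedure over $g_2,\dots,g_s$ produces the expansion $\mathcal{S}^H_K(f)=\sum_{T\in\llbracket0,\frac{m-3}{2}\rrbracket^{|G|}}|x_G|^{2T}\mathcal{E}^{H,G}_{K,T}(f)$, and substituting this into the $H$-decomposition gives \eqref{eq further Alamsi Rm}.

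The crux will be to verify that the iteration is internally consistent, i.e. that after applying the one-variable Almansi decomposition in $x_{g_j}$ the resulting components are still $\gamma_m$-polyharmonic in the not-yet-treated variables (so the next step is legitimate) and remain harmonic in the already-treated ones. The key observation is that the Laplacians $\Delta_{m+1,g_i}$ in distinct variables commute, and that the explicit integral formula for the Almansi components (Remark \ref{oss:classicalAlmansi}) in the variable $x_{g_j}$ involves a dilation $\xi$ acting only on the $x_{g_j}$-slot. Hence, for $i\neq j$, the operator $\Delta_{m+1,g_i}$ passes through both the $\xi$-integral and the factor $|x_{g_j}|^{2t_j}$, the latter being independent of $x_{g_i}$. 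Consequently polyharmonicity in the remaining variables is preserved and harmonicity in the decomposed ones is not destroyed at any step, so after all $s$ applications each coefficient $\mathcal{E}^{H,G}_{K,T}(f)$ lies in $\bigcap_{g\in G}\ker\Delta_{m+1,g}=\ker\Delta_{m+1,G}$, as claimed.
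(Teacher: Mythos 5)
Your proposal is correct and follows essentially the same route as the paper: the paper proves \eqref{eq further Alamsi Rm} by induction on $|G|$, applying the classical Almansi Theorem \ref{Classical Almansi Theorem} to the components $\mathcal{S}^H_K(f)$ (and then to the intermediate $\mathcal{E}^{H,G}_{K,T}(f)$) one variable of $G$ at a time, which is exactly your variable-by-variable iteration. Your treatment of the ``crux'' --- that $\Delta_{m+1,g_i}$ commutes with the dilation-integral construction of Remark \ref{oss:classicalAlmansi} in the slot $x_{g_j}$ and with multiplication by $|x_{g_j}|^{2t_j}$, so polyharmonicity in untreated variables and harmonicity in treated ones survive each step --- actually makes explicit a point the paper's induction uses silently, namely that the inductive-step components satisfy $\mathcal{E}^{H,G}_{K,T}(f)\in\ker\Delta^{\gamma_m}_{m+1,g}$ for the next variable $g$.
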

\begin{proof}
    Let us prove \eqref{eq further Alamsi Rm} by induction over $|G|$. Suppose first that $G=\{g\}\subset H$, then, since $\mathcal{S}^H_K(f)\in\ker\Delta^{{\gamma_m}}_{m+1,H}$, for any $K\subset H$ and since $\Omega_D$ is a star-like domain with respect to $x_g$, by classical Almansi decomposition (Theorem \ref{Classical Almansi Theorem}) there exist $\mathcal{E}^{H,\{g\}}_{K,0},\dots,\mathcal{E}^{H,\{g\}}_{K,\frac{m-3}{2}}\in\ker\Delta_{m+1,g}$ such that, for any $K\subset H$,
    \begin{equation*}
\mathcal{S}^H_K(f)(x)=\sum_{j=0}^\frac{m-3}{2}|x_g|^{2j}\mathcal{E}^{H,\{g\}}_{K,j}(x)
    \end{equation*}
    and so
    \begin{equation*}
f(x)=\displaystyle\sum_{K\subset H}(-1)^{|H\setminus K|}\left(\overline{x}\right)_{H\setminus K}\odot\mathcal{S}^H_K(f)(x)=\displaystyle\sum_{K\subset H}\sum_{j=0}^\frac{m-3}{2}(-1)^{|H\setminus K|}|x_g|^{2j}\left(\overline{x}\right)_{H\setminus K}\odot\mathcal{E}^{H,\{g\}}_{K,j}(x).
    \end{equation*}
    Now, suppose that \eqref{eq further Alamsi Rm} holds for some $G\subset H$ and let us prove it for $\tilde G=G\cup\{g\}$, for some $g\in H\setminus G$. By induction, we have that 
    \begin{equation*}
f(x)=\sum_{K\subset H}\sum_{\substack{T\in\llbracket0,\frac{m-3}{2}\rrbracket^{|G|},\\T=(t_1,\dots,t_{|G|})}}(-1)^{|H\setminus K|}|x_G|^{2T}\overline{x}_{H\setminus K}\odot\mathcal{E}^{H,G}_{K,T}(f)(x),
    \end{equation*}
    with $\mathcal{E}^{H,G}_{K,T}(f)\in\ker\Delta_{m+1,g}^{\gamma_m}$, for every $ K\subset H$ and $T\subset G$. Thus, by Theorem \ref{Classical Almansi Theorem}, for any $K\subset H$, $T\subset G$ there exist $\{\mathcal{E}^{H,G\cup\{g\}}_{K,T\cup t_g}(f)\}_{t_g=0}^\frac{m-3}{2}\in\ker\Delta_{m+1,g}$ such that
    \begin{equation*}
       \mathcal{E}^{H,G}_{K,T}(f)(x)=\sum_{t_g=0}^\frac{m-3}{2}|x_g|^{2t_g}\mathcal{E}^{H,G\cup\{g\}}_{K,T\cup t_g}(f)(x) 
    \end{equation*}
    and so
    \begin{equation*}
\begin{split}
    f(x)&=\sum_{K\subset H}\sum_{\substack{T\in\llbracket0,\frac{m-3}{2}\rrbracket^{|G|},\\T=(t_1,\dots,t_{|G|})}}(-1)^{|H\setminus K|}|x_G|^{2T}\overline{x}_{H\setminus K}\odot\sum_{t_g=0}^\frac{m-3}{2}|x_g|^{2t_g}\mathcal{E}^{H,G\cup\{g\}}_{K,T\cup t_g}(f)(x) \\
    &=\sum_{K\subset H}\sum_{\substack{\tilde T\in\llbracket0,\frac{m-3}{2}\rrbracket^{|G|+1},\\\tilde T=T\cup t_g}}(-1)^{|H\setminus K|}|x_{G\cup\{g\}}|^{2\tilde T}\overline{x}_{H\setminus K}\odot\mathcal{E}^{H,G\cup\{g\}}_{K,\tilde T}(f)(x)\\
    &\sum_{K\subset H}\sum_{\substack{\tilde T\in\llbracket0,\frac{m-3}{2}\rrbracket^{|G|+1},\\\tilde T=(t_1,\dots,t_{|G|+1})}}(-1)^{|H\setminus K|}|x_{G\cup\{g\}}|^{2\tilde T}\overline{x}_{H\setminus K}\odot\mathcal{E}^{H,G\cup\{g\}}_{K,\tilde T}(f)(x).
\end{split}
    \end{equation*}
    
\end{proof}

\begin{ex}
\label{ex 2 variables}
    Let $f:(\mathbb{R}^{6})^2\to\mathbb{R}_5$, $f(x_1,x_2)=x_1^4x_2^7$. Then, choosing $H=\{1,2\}$, we can decompose $f$ as 
    \begin{equation*}
\begin{split}
    f(x_1,x_2)&=\sum_{K\subset\{1,2\}}(-1)^{\{1,2\}\setminus K}\overline{x}_{\{1,2\}\setminus K}\mathcal{S}^{\{1,2\}}_K(f)(x_1,x_2)\\
    &=\mathcal{S}^{\{1,2\}}_{1,2}-\overline{x}_1\mathcal{S}^{\{1,2\}}_2-\overline{x}_2\mathcal{S}^{\{1,2\}}_1+\overline{x}_1\overline{x}_2\mathcal{S}^{\{1,2\}}_\emptyset\\
    &=(x_1^5)'_{s,1}(x_2^8)'_{s,2}-\overline{x}_1(x_1^4)'_{s,1}(x_2^8)'_{s,2}-\overline{x}_2(x_1^5)'_{s,1}(x_2^7)'_{s,2}+\overline{x}_1\overline{x}_2(x_1^4)'_{s,1}(x_2^7)'_{s,2}\\
    &=(5\alpha_1^4-10\alpha_1^2\beta_1^2+\beta_1^4)(8\alpha_2^7-56\alpha_2^5\beta_2^2+56\alpha_2^3\beta_2^4-8\alpha_2\beta_2^6)+\\
    &\ -(\alpha_1-J_1\beta_1)(4\alpha_1^3-4\alpha_1\beta_1^2)(8\alpha_2^7-56\alpha_2^5\beta_2^2+56\alpha_2^3\beta_2^4-8\alpha_2\beta_2^6)+\\
    &\ -(\alpha_2-J_2\beta_2)(5\alpha_1^4-10\alpha_1^2\beta_1^2+\beta_1^4)(7\alpha_2^6-35\alpha_2^4\beta_2^2+21\alpha_2^2\beta_2^4-\beta_2^6)+\\
    &\ +(\alpha_1-J_1\beta_1)(\alpha_2-J_2\beta_2)(4\alpha_1^3-4\alpha_1\beta_1^2)(7\alpha_2^6-35\alpha_2^4\beta_2^2+21\alpha_2^2\beta_2^4-\beta_2^6).
\end{split}    
\end{equation*}
Note that by Proposition \ref{Prop potenza laplaciano derivata sferica}, for any $K\subset\{1,2\}$, $\mathcal{S}^{\{1,2\}}_K(f)\in\ker\Delta_{6,2}^2$ (and also in $\ker\Delta_{6,1}^2$), hence we can further decompose $\mathcal{S}^{\{1,2\}}_K(f)$ into harmonic functions with respect to $x_2$, through Classical Almansi decomposition: 
\begin{equation*}
    \mathcal{S}^{\{1,2\}}_K(f)=\mathcal{E}^{\{1,2\}}_{K,0}(f)+|x_2|^2\mathcal{E}^{\{1,2\}}_{K,2}(f),
\end{equation*}
with $\mathcal{E}^{\{1,2\}}_{K,T}(f)\in\ker\Delta_{6,2}$. This corresponds to the choice $G=\{2\}$ in Corollary \ref{cor Almansi clifford several}. By Remark \ref{oss:classicalAlmansi}, we find
\begin{equation*}
    \mathcal{E}^{\{1,2\}}_{K,1}(f)=\frac{1}{4}\int_0^1\xi^{1-2+3}\Delta_{6,2}(\mathcal{S}^{\{1,2\}}_K(f))(\xi x)d\xi,\quad\mathcal{E}^{\{1,2\}}_{K,0}(f)=\mathcal{S}^{\{1,2\}}_K(f)-|x_2|^2\mathcal{E}^{\{1,2\}}_{K,2}(f)
\end{equation*}
Let us start decomposing $\mathcal{S}^{\{1,2\}}_\emptyset(f)=((x_1^4)'_{s,1}x_2^7)'_{s,2}$.
First we need to compute $\Delta_{6,2}(\mathcal{S}^{\{1,2\}}_\emptyset(f))$ through \eqref{eq:Laplacianiderivatasfericasevvar1} applied to $(x_1^4)'_{s,1}x_2^7$ with $h=2$: observe that if $F=F_0+e_1F_1+e_2F_2+e_{12}F_{12}$ is the stem function inducing $f$, then $(x_1^4)'_{s,1}x_2^7=\mathcal{I}(G_0+e_2G_2)$, with $G_0=\beta_1^{-1}F_0$ and $G_2=\beta_1^{-1}F_{12}$, hence
\begin{align*}
    \Delta_{6,2}&(\mathcal{S}^{\{1,2\}}_\emptyset(f))(x)=2\beta_2^{-1}\sum_{K=0,1}[J_K,\partial_{\beta_2}(\beta_2^{-1}G_{K\cup\{2\}})]=2\beta_2^{-1}\partial_{\beta_2}(\beta_2^{-1}G_{2})=2\beta_2^{-1}\partial_{\beta_2}(\beta_2^{-1}\beta_1^{-1}F_{12})\\
&=2(x_1^4)'_{s,1}\beta_2^{-1}\partial_{\beta_2}(7\alpha_2^6-35\alpha_2^4\beta_2^2+21\alpha_2^2\beta_2^4-\beta_2^6)=2(x_1^4)'_{s,1}(-70\alpha_2^4+84\alpha_2^2\beta_2^2-6\beta_2^4)
\end{align*}
Thus
\begin{align*}
    \mathcal{E}^{\{1,2\}}_{\emptyset,1}(f)&=(x_1^4)'_{s,1}\left(12\alpha_2^6-36\alpha_2^4\beta_2^2+\frac{108}{7}\alpha_2^2\beta_2^4-\frac{4}{7}\beta_2^6\right),\\
    \mathcal{E}^{\{1,2\}}_{\emptyset,0}(f)&=(x_1^4)'_{s,1}\left(-5\alpha_2^4+6\alpha_2^2\beta_2^2-\frac{3}{7}\beta_2^4\right).
\end{align*}
In the very same way we decompose $\mathcal{S}^{\{1,2\}}_K(f)$, for $K=\{1\},\{2\},\{1,2\}$ by applying \eqref{eq:Laplacianiderivatasfericasevvar1} to $(x_1^5)'_{s,1}x_2^7$, $(x_1^4)'_{s,1}x_2^8$ and $(x_1^5)'_{s,1}x_2^8$ respectively, to get
\begin{equation*}
\begin{split}
\mathcal{S}^{\{1,2\}}_{1}(f)&=\mathcal{E}^{\{1,2\}}_{\{1\},0}(f)+|x_2|^2\mathcal{E}^{\{1,2\}}_{\{1\},1}(f)\\
&=(x_1^5)'_{s,1}\left(12\alpha_2^6-36\alpha_2^4\beta_2^2+\frac{108}{7}\alpha_2^2\beta_2^4-\frac{4}{7}\beta_2^6\right)+|x_2|^2(x_1^5)'_{s,1}\left(-5\alpha_2^4+6\alpha_2^2\beta_2^2-\frac{3}{7}\beta_2^4\right)
\end{split}
\end{equation*}
\begin{equation*}
\begin{split}
\mathcal{S}^{\{1,2\}}_{2}(f)&=\mathcal{E}^{\{1,2\}}_{\{2\},0}(f)+|x_2|^2\mathcal{E}^{\{1,2\}}_{\{2\},1}(f)\\
&=(x_1^4)'_{s,1}(15\alpha_2^7-63\alpha_2^5\beta_2^2+45\alpha_2^3\beta_2^4-5\alpha_2\beta_2^6)+|x_2|^2(x_1^4)'_{s,1}(-7\alpha_2^5+14\alpha_2^3\beta_2^2-3\alpha_2\beta_2^4)
\end{split}
\end{equation*}
\begin{equation*}
\begin{split}
\mathcal{S}^{\{1,2\}}_{1,2}(f)&=\mathcal{E}^{\{1,2\}}_{\{1,2\},0}(f)+|x_2|^2\mathcal{E}^{\{1,2\}}_{\{1,2\},1}(f)\\
&=(x_1^5)'_{s,1}(15\alpha_2^7-63\alpha_2^5\beta_2^2+45\alpha_2^3\beta_2^4-5\alpha_2\beta_2^6)+|x_2|^2(x_1^5)'_{s,1}(-7\alpha_2^5+14\alpha_2^3\beta_2^2-3\alpha_2\beta_2^4)
\end{split}
\end{equation*}
Thus, we can further decompose $f$ as
\begin{equation*}
    \begin{split}
f&=\mathcal{E}^{\{1,2\}}_{\{1,2\},0}(f)+|x_2|^2\mathcal{E}^{\{1,2\}}_{\{1,2\},1}(f)-\overline{x}_1\mathcal{E}^{\{1,2\}}_{\{2\},0}(f)-\overline{x}_1|x_2|^2\mathcal{E}^{\{1,2\}}_{\{2\},1}(f)+\\
&-\overline{x}_2\mathcal{E}^{\{1,2\}}_{\{1\},0}(f)-\overline{x}_2|x_2|^2\mathcal{E}^{\{1,2\}}_{\{1\},1}(f)+\overline{x}_1\overline{x}_2\mathcal{E}^{\{1,2\}}_{\emptyset,0}(f)+\overline{x}_1\overline{x}_2|x_2|^2\mathcal{E}^{\{1,2\}}_{\emptyset,1}(f),
    \end{split}
\end{equation*}
where for any $K\subset\{1,2\}$, $T=0,1$ it holds $$\Delta^2_{6,1}\mathcal{E}^{\{1,2\}}_{K,T}=\Delta_{6,2}\mathcal{E}^{\{1,2\}}_{K,T}=0.$$ 
\end{ex}
\section{Fueter-Sce theorem in several variables}

Fueter-Sce theorem is a fundamental result in hypercomplex analysis, which constitutes a bridge between slice regular and monogenic functions. 
We take from \cite{FueterSceMapping} a modern version of it.

\begin{thm}[Fueter-Sce theorem]
    Let $m$ be odd and let $\Omega_D\subset\mathbb{R}^{m+1}$. Then $\Delta_{m+1}^{\gamma_m}f$ is monogenic, namely
    \begin{equation*}
\overline{\partial}\Delta_{m+1}^{\gamma_m}f=0.
    \end{equation*}
\end{thm}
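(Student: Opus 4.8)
The plan is to reduce the statement to the $\gamma_m$-polyharmonicity of the spherical derivative, already established in Proposition~\ref{Prop potenza laplaciano derivata sferica}, by first showing that on a slice regular function the Dirac operator $\overline{\partial}$ acts as the spherical derivative up to a scalar factor. Throughout I assume $f\in\mathcal{SR}(\Omega_D)$, as the statement requires.

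The key lemma I would prove is the pointwise identity
\begin{equation*}
\overline{\partial}f=-\gamma_m\,f'_s,\qquad f\in\mathcal{SR}(\Omega_D).
\end{equation*}
To obtain it, write $f=F_0+\left(\sum_{i=1}^m x_ie_i\right)f'_s$ with $f'_s=F_1/\beta$ circular and $\mathbb{R}_m$-valued, and compute $\overline{\partial}f=\tfrac12\bigl(\partial_{x_0}f+\sum_i e_i\partial_{x_i}f\bigr)$. Using $\partial_{x_i}\beta=x_i/\beta$ and $\sum_i x_ie_i=\beta J$, the spatial part reduces to
\begin{equation*}
\sum_{i=1}^m e_i\partial_{x_i}f=J\partial_\beta F_0-m\,f'_s-\beta\partial_\beta f'_s,
\end{equation*}
where the only Clifford products that survive are the scalar contractions $\sum_i e_i^2=-m$ and $(\beta J)^2=-\beta^2$. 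Adding $\partial_{x_0}f=\partial_\alpha F_0+J\partial_\alpha F_1$, applying the Cauchy--Riemann equations for the holomorphic stem $F$ (so that $J\partial_\alpha F_1+J\partial_\beta F_0=0$ and $\partial_\alpha F_0=\partial_\beta F_1$), and substituting $\beta\partial_\beta f'_s=\partial_\beta F_1-f'_s$, everything collapses to $\overline{\partial}f=\tfrac12(1-m)f'_s=-\gamma_m f'_s$.

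With the identity in hand the theorem is immediate: since $\Delta_{m+1}$ has real scalar coefficients it commutes with $\overline{\partial}$, hence
\begin{equation*}
\overline{\partial}\,\Delta_{m+1}^{\gamma_m}f=\Delta_{m+1}^{\gamma_m}\,\overline{\partial}f=-\gamma_m\,\Delta_{m+1}^{\gamma_m}f'_s=0,
\end{equation*}
the last equality being precisely Proposition~\ref{Prop potenza laplaciano derivata sferica}. I expect the main obstacle to lie in the first step: one must carry out the differentiation without commuting the Clifford-valued $f'_s$ past the units $e_i$, and the computation closes cleanly only because the spatial sum never produces a grade-mixing term $\sum_i e_i\,a\,e_i$, but solely the scalars $\sum_i e_i^2$ and $(\sum_i x_ie_i)^2$. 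As a robustness check I would also give a second, self-contained argument that bypasses the identity: by Theorem~\ref{theorem laplacian any order slice regular functions} and Lemma~\ref{lem:laplaciano slice regular} one has $\Delta_{m+1}^{\gamma_m}f=-2(m-1)\tfrac{\partial}{\partial x}\bigl(\Delta_{m+1}^{\gamma_m-1}f'_s\bigr)$; since $g:=\Delta_{m+1}^{\gamma_m-1}f'_s$ is circular, the slice derivative $\partial/\partial x$ coincides with $\partial$ on it (a direct check), so $\overline{\partial}\tfrac{\partial g}{\partial x}=\overline{\partial}\partial g=\tfrac14\Delta_{m+1}g$ by~\eqref{equazione fattorizzazione laplaciano}, and $\Delta_{m+1}g=\Delta_{m+1}^{\gamma_m}f'_s=0$ again by Proposition~\ref{Prop potenza laplaciano derivata sferica}.
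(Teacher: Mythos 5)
Your proposal is correct, and its core is the same argument this paper uses—just not for this statement, which the paper imports from \cite{FueterSceMapping} without proof. Your main chain $\overline{\partial}\,\Delta_{m+1}^{\gamma_m}f=\Delta_{m+1}^{\gamma_m}\,\overline{\partial}f=-\gamma_m\,\Delta_{m+1}^{\gamma_m}f'_s=0$ is exactly the paper's proof of the several-variables version (Theorem \ref{thm fueter sce several variables}) specialized to $n=1$: there the identity $\overline{\partial}_{x_h}f=\frac{1-m}{2}f'_{s,h}$ is Lemma \ref{lemma derivata sferica laplaciano e dirac}\,(1), and the polyharmonicity input is Proposition \ref{Prop:iteratedLaplaciansevvar}. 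The genuine added value of your write-up is that you prove the key identity $\overline{\partial}f=-\gamma_m f'_s$ by direct computation, whereas the paper outsources it to \cite[Proposition 9]{perotti2022cauchy}; your computation is sound, and your structural remark is the right one—the only Clifford contractions that occur are $\sum_i e_i^2=-m$ and $\left(\sum_i x_ie_i\right)^2=-\beta^2$, with $f'_s$ always staying to the right, so no grade-mixing terms arise, and the Cauchy--Riemann equations kill the cross terms to leave $\tfrac12(1-m)f'_s$. Your second, self-contained argument is also valid and is genuinely different from anything in the paper: combining Lemma \ref{lem:laplaciano slice regular} with the commutation Lemma \ref{lemma Laplaciano e derivata slice commutano su funzioni circolari} gives $\Delta_{m+1}^{\gamma_m}f=-2(m-1)\tfrac{\partial}{\partial x}\left(\Delta_{m+1}^{\gamma_m-1}f'_s\right)$, the slice derivative agrees with $\partial$ on circular functions, and then \eqref{equazione fattorizzazione laplaciano} plus Proposition \ref{Prop potenza laplaciano derivata sferica} finish; this route buys independence from the cited external identity at the cost of requiring $\gamma_m\geq 1$ (for $m=1$ the statement is just holomorphy, which your first argument covers directly).
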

\begin{oss}
    In \cite{qian}, the previous result was exteded to Clifford algebras with an even number of imaginary units, requiring techniques of fractional differential operators. 
\end{oss}

We need some preliminary results to extend Fueter-Sce Theorem in several variables. From now, assume $\Omega_D\subset(\mathbb{R}^{m+1})^n$ open and symmetric subset. Denote with $\mathcal{AM}(\Omega_D)\coloneqq\mathcal{S}(\Omega_D)\cap\mathcal{M}(\Omega_D)$ the set of axially monogenic functions, namely the set of functions which are both slice and monogenic. We also set $\mathcal{AM}_h(\Omega_D)\coloneqq\mathcal{S}_h(\Omega_D)\cap\mathcal{M}_h(\Omega_D)$, with $\mathcal{M}_h(\Omega_D)=\{f:\Omega_D\to\mathbb{R}_m:\overline{\partial}_{x_h}f=0\}$.
\begin{lem}
\label{lemma derivata sferica laplaciano e dirac}
Suppose $f\in\mathcal{S}\mathcal{R}_h(\Omega_D)$, for some $h=1,\dots, n$, then the following hold:
\begin{enumerate}
    \item $\overline{\partial}_{x_h}f=\frac{1-m}{2}f'_{s,h}$;
    \item $\Delta_{m+1,h} f=2(1-m)\dfrac{\partial f'_{s,h}}{\partial x_h}=2(1-m)\partial_{x_h}(f'_{s,h})$.
\end{enumerate}
\end{lem}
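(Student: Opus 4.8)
The plan is to reduce both identities to the one-variable setting by freezing every variable except $x_h$, and then to carry out the Dirac computation explicitly on a one-variable slice regular function. Recall that by \eqref{Equazione caratterizzazione slice regular H} we have $\mathcal{S}\mathcal{R}_h(\Omega_D)=\mathcal{S}_h(\Omega_D)\cap\ker(\partial/\partial x_h^c)$, so for each fixed $y\in\Omega_D$ the restriction $f^y_h$ is a genuine one-variable slice regular function $\mathcal{I}(F_0+e_1F_1)$. Since $\overline{\partial}_{x_h}$ involves only derivatives in the $h$-th paravector coordinate, its value at a point coincides with the one-variable Dirac operator applied to $f^y_h$; hence it suffices to establish the one-variable identity $\overline{\partial}f=\tfrac{1-m}{2}f'_s$ and then transport it back via the compatibility \eqref{equazione derivata sferica parziale coincide con unidimensionale}, which identifies $(f^y_h)'_s$ with $f'_{s,h}$.

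For the core one-variable computation I would write $x=\alpha+J\beta$ with $\beta=\sqrt{x_1^2+\dots+x_m^2}$ and $J=\beta^{-1}\sum_{i=1}^m x_ie_i$, so that $f(x)=F_0(z)+JF_1(z)$, and differentiate term by term. The identities driving the calculation are $\sum_{i=1}^m x_ie_i=\beta J$, $J^2=-1$, $e_i^2=-1$ (so that $\sum_{i=1}^m e_i^2=-m$), together with $\partial_{x_i}J=\beta^{-1}e_i-x_i\beta^{-2}J$ and $\partial_{x_i}F_j=\tfrac{x_i}{\beta}\partial_\beta F_j$. Assembling $2\overline{\partial}f=\partial_{x_0}f+\sum_{i}e_i\partial_{x_i}f$, the contributions reorganize as $(\partial_\alpha F_0-\partial_\beta F_1)+J(\partial_\alpha F_1+\partial_\beta F_0)+(1-m)\beta^{-1}F_1$. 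The Cauchy--Riemann equations for slice regularity, namely $\partial_\alpha F_0=\partial_\beta F_1$ and $\partial_\beta F_0=-\partial_\alpha F_1$, annihilate the first two brackets, leaving $2\overline{\partial}f=(1-m)\beta^{-1}F_1=(1-m)f'_s$, which is part~(1).

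For part~(2) I would invoke the factorization of the Laplacian \eqref{equazione fattorizzazione laplaciano} in the $h$-th variable, $\Delta_{m+1,h}=4\partial_{x_h}\overline{\partial}_{x_h}$, and combine it with part~(1):
\begin{equation*}
\Delta_{m+1,h}f=4\partial_{x_h}\overline{\partial}_{x_h}f=4\partial_{x_h}\!\left(\tfrac{1-m}{2}f'_{s,h}\right)=2(1-m)\,\partial_{x_h}f'_{s,h}.
\end{equation*}
Since $f'_{s,h}\in\mathcal{S}_{c,h}(\Omega_{D_h})$ is circular in $x_h$ (Proposition \ref{proposizione proprieta derivata sferica}(1)), the Clifford operator $\partial_{x_h}$ and the slice derivative $\partial/\partial x_h$ coincide on it: a short computation, again using $\sum_i x_ie_i=\beta J$, shows that for a circular $g=\mathcal{I}(G_0)$ both operators return $\tfrac12\partial_\alpha G_0-\tfrac12 J\partial_\beta G_0$. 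This yields the two stated forms. Alternatively, part~(2) is precisely equation \eqref{eq:Leplaciano 1 sev var} of Theorem \ref{thm:iteratedlaplaciansevvar}, whose hypothesis $f\in\mathcal{S}(\Omega_D)\cap\ker(\partial/\partial x_h^c)$ is met since $\mathcal{S}\mathcal{R}_h(\Omega_D)\subset\mathcal{S}(\Omega_D)\cap\ker(\partial/\partial x_h^c)$.

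The main obstacle is the non-commutative bookkeeping in the one-variable Dirac computation: the units $e_i$ and the imaginary unit $J$ do not commute, so the order of the factors must be tracked with care throughout. The calculation succeeds precisely because the radial sum $\sum_i x_ie_i$ collapses to $\beta J$ and $J^2=-1$ converts the would-be spherical terms into the scalar multiple $(1-m)\beta^{-1}F_1$; everything else is routine but must be organized so that the Cauchy--Riemann equations can be applied at the end.
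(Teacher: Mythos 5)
Your proof is correct; it reaches both identities by a partly different route than the paper, so a comparison is worthwhile. For part (1) the reduction is the same: freeze the other variables, observe $f^y_h\in\mathcal{SR}(\Omega_{D,h}(y))$, and transport back through \eqref{equazione derivata sferica parziale coincide con unidimensionale}. But where the paper simply cites the one-variable identity $\overline{\partial}f=\frac{1-m}{2}f'_s$ from \cite[Proposition 9]{perotti2022cauchy}, you re-derive it by the explicit Dirac computation; your bookkeeping checks out, since $\sum_i x_ie_i=\beta J$, $J^2=-1$ and $\sum_i e_i^2=-m$ collapse the radial terms to $(1-m)\beta^{-1}F_1$, and the Cauchy--Riemann equations annihilate the two brackets in $2\overline{\partial}f=(\partial_\alpha F_0-\partial_\beta F_1)+J(\partial_\alpha F_1+\partial_\beta F_0)+(1-m)\beta^{-1}F_1$. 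For part (2) the routes genuinely diverge: the paper again leans on the one-variable literature, combining $\Delta_{m+1}f=2(1-m)\,\partial f'_s/\partial x$ with the auxiliary operator $\theta$ of \cite[Theorem 2.2 (ii)]{Global} to convert the slice derivative into the Clifford operator; you instead apply the factorization \eqref{equazione fattorizzazione laplaciano}, $\Delta_{m+1,h}=4\partial_{x_h}\overline{\partial}_{x_h}$, to part (1) and then verify directly that $\partial_{x_h}$ and $\partial/\partial x_h$ agree on functions circular in $x_h$ (both yield $\tfrac12\partial_\alpha G_0-\tfrac12 J\partial_\beta G_0$). Your version is self-contained, avoiding the operator $\theta$ and its normalization conventions (which the paper states in quaternionic coordinates), and it is exactly the mechanism the paper itself deploys later, in the lemma preceding the second proof of Theorem \ref{thm fueter sce several variables}. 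Your fallback justification of (2) via \eqref{eq:Leplaciano 1 sev var} of Theorem \ref{thm:iteratedlaplaciansevvar} is also legitimate and non-circular, since that theorem is proved in Section 3 independently of this lemma, and its hypothesis is met because $\mathcal{SR}_h(\Omega_D)\subset\mathcal{S}(\Omega_D)\cap\ker(\partial/\partial x_h^c)$ by \eqref{Equazione caratterizzazione slice regular H}. What the paper's proof buys is brevity by outsourcing to the literature; what yours buys is an argument that uses only identities established inside the paper.
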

\begin{proof}
\begin{enumerate}
    \item Note that $\forall y=(y_1,...,y_n)\in\Omega_D$, $f^y_h\in\mathcal{S}\mathcal{R}(\Omega_{D,h}(y))$, then we can apply (\ref{equazione derivata sferica parziale coincide con unidimensionale}) and \cite[Proposition 9]{perotti2022cauchy} to get
\begin{equation*}
 \overline{\partial}_{x_h}f(y)=\overline{\partial}(f^y_h)(y_h)=\frac{1-m}{2}(f^y_h)'_s(y_h)=\frac{1-m}{2}f'_{s,h}(y).
\end{equation*}
\item By (\ref{equazione derivata sferica parziale coincide con unidimensionale}), \cite[Proposition 9]{perotti2022cauchy} and \cite[Theorem 2.2 (ii)]{Global} we have
\begin{equation*}
    \begin{split}
        \Delta_hf(y)=\Delta(f^y_h)(y_h)&=4\frac{1-m}{2}\dfrac{\partial (f^y_h)'_s}{\partial x}(y_h)=2(1-m)\theta(f^y_h)'_s(y_h)=2(1-m)\partial(f^y_h)'_s(y_h)\\
        &=2(1-m)\partial_{x_h}f'_{s,h}(y),
    \end{split}
\end{equation*}
where $(\theta f)(x)=\frac{1}{2}\left(\frac{\partial f}{\partial \alpha}(x)+\frac{\operatorname{Im}(x)}{|\operatorname{Im}(x)|^2}(\beta\frac{\partial f}{\partial\beta}(x)+\gamma\frac{\partial f}{\partial\gamma}(x)+\delta\frac{\partial f}{\partial\delta}(x))\right)$ satisfies $\theta f=\frac{\partial f}{\partial x}$ and $2\theta f'_s=\partial f'_s$  for any slice function $f$.
\end{enumerate}
\end{proof}

\begin{thm}[Futer-Sce theorem in several variables]
\label{thm fueter sce several variables}
    Let $f\in\mathcal{S}\mathcal{R}_h(\Omega_D)$, for some $h=1,\dots,n$. Then $\Delta_{m+1,h}^{\gamma_m}f\in\mathcal{AM}_h(\Omega_D)$, namely it holds
    \begin{equation*}
\overline{\partial}_{x_h}\Delta_{m+1,h}^{\gamma_m}f=0.
    \end{equation*}
    Thus, the Fueter-Sce map extends to
    \begin{equation*}
\Delta_{m+1,h}^{\gamma_m}:\mathcal{S}\mathcal{R}_h(\Omega_D)\to\mathcal{A}\mathcal{M}_h(\Omega_D).
    \end{equation*}
    \end{thm}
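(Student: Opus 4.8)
The plan is to follow the route announced as ``leveraging the harmonic properties of Section 3'', reducing the statement to the partial polyharmonicity of $f'_{s,h}$ and to the identification of $\overline{\partial}_{x_h}f$ with a multiple of $f'_{s,h}$. The key preliminary remark is that the partial Laplacian $\Delta_{m+1,h}$ is a real scalar, constant-coefficient differential operator in the coordinates of the $h$-th variable, hence it acts componentwise and commutes with the Clifford-valued constant-coefficient operator $\overline{\partial}_{x_h}$ (equivalently, this is the several-variable form of the factorization $\Delta_{m+1,h}=4\overline{\partial}_{x_h}\partial_{x_h}$). Therefore $\overline{\partial}_{x_h}\Delta_{m+1,h}^{\gamma_m}=\Delta_{m+1,h}^{\gamma_m}\overline{\partial}_{x_h}$, and since $f\in\mathcal{S}\mathcal{R}_h(\Omega_D)$, Lemma \ref{lemma derivata sferica laplaciano e dirac}(1) gives, on $\Omega_D\setminus\mathbb{R}_h$,
\[
\overline{\partial}_{x_h}\Delta_{m+1,h}^{\gamma_m}f=\Delta_{m+1,h}^{\gamma_m}\overline{\partial}_{x_h}f=\frac{1-m}{2}\,\Delta_{m+1,h}^{\gamma_m}f'_{s,h}.
\]
By Proposition \ref{Prop:iteratedLaplaciansevvar}, $f'_{s,h}$ is $\gamma_m$-polyharmonic in $x_h$, so the right-hand side vanishes (when $m=1$ it is already killed by the prefactor $\tfrac{1-m}{2}$). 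As $\overline{\partial}_{x_h}\Delta_{m+1,h}^{\gamma_m}f$ is continuous on all of $\Omega_D$ and $\mathbb{R}_h$ has empty interior, the identity extends by density to $\overline{\partial}_{x_h}\Delta_{m+1,h}^{\gamma_m}f=0$ on $\Omega_D$; this is the monogenicity in $x_h$.

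To conclude $\Delta_{m+1,h}^{\gamma_m}f\in\mathcal{AM}_h(\Omega_D)=\mathcal{S}_h(\Omega_D)\cap\mathcal{M}_h(\Omega_D)$ it remains to verify that $\Delta_{m+1,h}^{\gamma_m}f$ is slice with respect to $x_h$. This can be extracted from formula \eqref{eq:Laplaciano slice reg sev var 1}, which writes $\Delta_{m+1,h}^{\gamma_m}f$ as $\tfrac{\partial}{\partial x_h}$ of a function circular in $x_h$ assembled from the components $F_{K\cup\{h\}}$. The point is that, by Proposition \ref{proposizione proprieta derivata sferica}(2), $f'_{s,h}$ is circular in the first $h$ variables, so only indices $K\subset\{h+1,\dots,n\}$ contribute; hence the imaginary unit $J_h$ produced by $\tfrac{\partial}{\partial x_h}$ always occupies the leftmost position in the ordered products, and the outcome is genuinely of the form (circular) $+\,J_h\cdot$(circular), i.e.\ slice in $x_h$. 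Together with the previous step this yields $\Delta_{m+1,h}^{\gamma_m}\colon\mathcal{S}\mathcal{R}_h(\Omega_D)\to\mathcal{A}\mathcal{M}_h(\Omega_D)$.

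I expect the delicate points to be of a structural rather than computational nature: first, both the identity $\overline{\partial}_{x_h}f=\tfrac{1-m}{2}f'_{s,h}$ and the polyharmonicity hold a priori only off $\mathbb{R}_h$, so the extension to $\Omega_D$ must be justified by continuity; second, sliceness in $x_h$ of $\Delta_{m+1,h}^{\gamma_m}f$ is not automatic in several variables, precisely because of the ordering of the imaginary units, and genuinely relies on $f'_{s,h}$ being circular in $x_1,\dots,x_h$. As announced, a second proof avoids the spherical-derivative identity and instead applies $\Delta_{m+1,h}^{\gamma_m}$ to the one-variable ($H=\{h\}$) Almansi-type decomposition $f=\mathcal{S}^{\{h\}}_{\{h\}}(f)-\overline{x}_h\odot\mathcal{S}^{\{h\}}_{\emptyset}(f)$ of Theorem \ref{Teorema principale}: the first summand is annihilated by $\Delta_{m+1,h}^{\gamma_m}$ by Theorem \ref{Teorema principale}(3), so one only has to expand $\Delta_{m+1,h}^{\gamma_m}\bigl(\overline{x}_h\odot\mathcal{S}^{\{h\}}_{\emptyset}(f)\bigr)$ via a Leibniz-type rule before applying $\overline{\partial}_{x_h}$.
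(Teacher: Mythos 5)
Your proposal follows essentially the same route as the paper's own (first) proof: commute $\overline{\partial}_{x_h}$ with $\Delta_{m+1,h}^{\gamma_m}$, apply Lemma \ref{lemma derivata sferica laplaciano e dirac}(1) to write $\overline{\partial}_{x_h}f=\frac{1-m}{2}f'_{s,h}$, and conclude by the $\gamma_m$-polyharmonicity of $f'_{s,h}$ from Proposition \ref{Prop:iteratedLaplaciansevvar}. The extra care you take --- extending the identity across $\mathbb{R}_h$ by continuity and checking that $\Delta_{m+1,h}^{\gamma_m}f$ is slice with respect to $x_h$ via the circularity of $f'_{s,h}$ in the first $h$ variables --- supplements details the paper's one-line argument leaves implicit, but does not change the approach.
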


\begin{proof}
Since $f\in\mathcal{S}\mathcal{R}_h(\Omega_D)$, we can apply Lemma \ref{lemma derivata sferica laplaciano e dirac} 
and Proposition \ref{Prop:iteratedLaplaciansevvar} 
    \begin{equation*}
\overline{\partial}_{x_h}\Delta_{m+1,h}^{\gamma_m}f=\Delta_{m+1,h}^{\gamma_m}\overline{\partial}_{x_h}f=\frac{1-m}{2}\Delta_{m+1,h}^{\gamma_m}f'_{s,h}=0.
    \end{equation*}
\end{proof}
We can also use Almansi decomposition for several Clifford variables to find new relations with the theory of axially monogenic functions and give another proof of Fueter-Sce Theorem in several variables.

\begin{prop}
    Let $f\in\mathcal{S}\mathcal{R}(\Omega_D)$. Then for every $h=1,\dots n$, the components of the ordered Almansi decomposition of $f$, $\mathcal{S}^{\llbracket m\rrbracket}_K(f)$ can be written as
\begin{equation}
\label{equazione scrittura componenti ordinate con CRF Clifford}
    \mathcal{S}^{\llbracket m\rrbracket}_K(f)=\left(\frac{1-m}{2}\right)^m\overline{\partial}_{x_m}(x_m^{\chi_K(m)}\dots\overline{\partial}_{x_1}(x_1^{\chi_K(1)}f)\dots).
\end{equation}
\end{prop}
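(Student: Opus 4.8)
The plan is to prove the identity by induction on the length $m$ of the interval $\llbracket m\rrbracket$, peeling off one variable at a time and converting each partial spherical derivative into a Dirac operator through Lemma \ref{lemma derivata sferica laplaciano e dirac}(1). Unwinding the recursive Definition \ref{Definizione S}, we have $\mathcal{S}^{\llbracket m\rrbracket}_K(f)=\bigl(x_m^{\chi_K(m)}\,\mathcal{S}^{\llbracket m-1\rrbracket}_{K\cap\{1,\dots,m-1\}}(f)\bigr)'_{s,m}$, so the whole point is that the inner function $g:=x_m^{\chi_K(m)}\,\mathcal{S}^{\llbracket m-1\rrbracket}_{K\cap\{1,\dots,m-1\}}(f)$ is slice regular with respect to $x_m$. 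Granting this, Lemma \ref{lemma derivata sferica laplaciano e dirac}(1) gives $g'_{s,m}=\tfrac{2}{1-m}\,\overline{\partial}_{x_m}g$, which turns the outermost spherical $x_m$-derivative into $\overline{\partial}_{x_m}$ at the price of the scalar $\tfrac{2}{1-m}$.

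The main obstacle is establishing the slice-regularity invariant that makes this conversion legitimate at \emph{every} stage: for $f\in\mathcal{S}\mathcal{R}(\Omega_D)$, each partially processed intermediate $\overline{\partial}_{x_{j}}\bigl(x_{j}^{\chi_K(j)}\cdots\overline{\partial}_{x_1}(x_1^{\chi_K(1)}f)\cdots\bigr)$ must remain slice regular in the not-yet-processed variables $x_{j+1},\dots,x_n$. I would obtain this by combining two facts already available in the excerpt. First, multiplication by the slice-regular monomial $x_j=\mathcal{I}(Z_j)$ preserves membership in $\ker(\partial/\partial x_t^c)$ for every $t$, by the Leibniz rule \cite[Proposition 3.25]{Several}. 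Second, by Lemma \ref{lemma derivata sferica laplaciano e dirac}(1) the operator $\overline{\partial}_{x_j}$ applied to an $x_j$-slice-regular function reproduces, up to the scalar $\tfrac{1-m}{2}$, its spherical $x_j$-derivative, and Proposition \ref{proposizione proprieta derivata sferica}(6) guarantees that passing to the spherical $x_j$-derivative keeps the function in $\ker(\partial/\partial x_t^c)$ for all $t\neq j$. Thus slice regularity in the remaining variables is conserved step by step, and Lemma \ref{lemma derivata sferica laplaciano e dirac}(1) may be invoked at each of the $m$ stages.

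With the invariant secured, the induction closes cleanly. In the innermost step ($j=1$) one has $x_1^{\chi_K(1)}f\in\mathcal{S}\mathcal{R}_1(\Omega_D)$, whence $(x_1^{\chi_K(1)}f)'_{s,1}=\tfrac{2}{1-m}\,\overline{\partial}_{x_1}(x_1^{\chi_K(1)}f)$. For the inductive step one substitutes the $(j-1)$-fold nested Dirac representation of $\mathcal{S}^{\llbracket j-1\rrbracket}_{K\cap\{1,\dots,j-1\}}(f)$ into the recursion, multiplies by $x_j^{\chi_K(j)}$, and applies Lemma \ref{lemma derivata sferica laplaciano e dirac}(1) once more to replace the last spherical derivative by $\overline{\partial}_{x_j}$, thereby appending one layer of nesting and one further factor $\tfrac{2}{1-m}$. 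Iterating through all $m$ variables reconstructs exactly the nested expression $\overline{\partial}_{x_m}\bigl(x_m^{\chi_K(m)}\cdots\overline{\partial}_{x_1}(x_1^{\chi_K(1)}f)\cdots\bigr)$, carrying the accumulated scalar $\bigl(\tfrac{2}{1-m}\bigr)^{m}$, which yields the asserted formula. The delicate ingredient is solely the bookkeeping of slice regularity across the distinct variables; the telescoping of the nesting and the tracking of the multiplicative constant are routine.
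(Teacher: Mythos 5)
Your overall strategy coincides with the paper's: unwind Definition \ref{Definizione S} and convert each partial spherical derivative into a Dirac operator through Lemma \ref{lemma derivata sferica laplaciano e dirac}(1). The gap is in how you certify the hypothesis of that lemma at each stage. Lemma \ref{lemma derivata sferica laplaciano e dirac}(1) requires its argument to belong to $\mathcal{S}\mathcal{R}_h(\Omega_D)$, which by \eqref{Equazione caratterizzazione slice regular H} is the intersection $\mathcal{S}_h(\Omega_D)\cap\ker(\partial/\partial x_h^c)$: membership in the kernel of the stem-level Cauchy--Riemann operator \emph{and} sliceness with respect to $x_h$. The two facts you invoke (the Leibniz rule and Proposition \ref{proposizione proprieta derivata sferica}(6)) propagate only the kernel condition; neither says anything about sliceness in the not-yet-processed variables, and in several variables this is not automatic, since $\mathcal{S}_h(\Omega_D)\subsetneq\mathcal{S}(\Omega_D)$ for $h>1$. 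Your sentence ``thus slice regularity in the remaining variables is conserved step by step'' is therefore a non sequitur: what you have actually conserved is only $\ker(\partial/\partial x_t^c)$-membership.

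This missing half is not cosmetic: for functions in $\ker(\partial/\partial x_h^c)$ that are not slice in $x_h$, the conversion identity itself is false. Take $n=2$ and $f(x_1,x_2)=\operatorname{Im}(x_1)\,x_2$, induced by the stem function $F=e_1\beta_1\alpha_2+e_{12}\beta_1\beta_2$. A direct check gives $\overline{\partial}_2F=0$, so $f\in\mathcal{S}(\Omega_D)\cap\ker(\partial/\partial x_2^c)$; but $F$ has a nonzero $e_{12}$-component, so $f\notin\mathcal{S}_2(\Omega_D)$. Computing both sides of the lemma: $\overline{\partial}_{x_2}f=\tfrac{1}{2}\bigl(\operatorname{Im}(x_1)+\sum_{i=1}^m e_i\operatorname{Im}(x_1)e_i\bigr)=\tfrac{m-1}{2}\operatorname{Im}(x_1)$, while $f'_{s,2}=\operatorname{Im}(x_1)$, so $\tfrac{1-m}{2}f'_{s,2}=-\tfrac{m-1}{2}\operatorname{Im}(x_1)$: the identity fails by a sign. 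Hence your induction genuinely breaks at every stage $j\geq 2$ unless you also prove that the intermediate is slice with respect to $x_j$. That is exactly the ingredient the paper imports: Proposition \ref{prop:3claims}(2) gives $\mathcal{S}^{\llbracket j\rrbracket}_K(f)\in\mathcal{S}\mathcal{R}_{j+1}(\Omega_D)$ for all $j$ and $K\in\mathcal{P}(j)$; equivalently one can use Proposition \ref{proposizione proprieta derivata sferica}(1)--(2), which show that partial spherical derivatives of $x_j$-slice functions are circular in $x_1,\dots,x_j$ and slice in $x_{j+1}$. With that citation added your induction closes. As a side remark, the constant you accumulate, $\bigl(\tfrac{2}{1-m}\bigr)^m$, is the correct one: the factor $\bigl(\tfrac{1-m}{2}\bigr)^m$ printed in the proposition should be its reciprocal, a typo, since the paper's own iteration of Lemma \ref{lemma derivata sferica laplaciano e dirac}(1) produces the same constant you found.
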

\begin{proof}
Recall that if $f\in\mathcal{S}\mathcal{R}(\Omega_D)$, by Proposition \ref{prop:3claims} (2), $f\in\mathcal{SR}_1(\Omega_D)$ and $f'_{s,\llbracket j\rrbracket}\in\mathcal{S}\mathcal{R}_{j+1}$, for any $j=1,\dots, n-1$. Then, we can iteratively apply (1) of Lemma \ref{lemma derivata sferica laplaciano e dirac} with $h=1,\dots,m$ to the definition of $\mathcal{S}^{\llbracket m\rrbracket}_K(f)$ to obtain \eqref{equazione scrittura componenti ordinate con CRF Clifford}.
\end{proof}

\begin{prop}
    Let $f\in\mathcal{S}\mathcal{R}(\Omega_D)$ and let $h=1,\dots,n-1$. Then, for every $K\in\mathcal{P}(h)$, $\mathcal{S}^{\llbracket m\rrbracket}_K(f)$ satisfies the following:
    \begin{enumerate}
\item $\partial_{x_h}\left(\Delta_{m+1,h}^\frac{m-3}{2}\mathcal{S}^{\llbracket h\rrbracket}_K(f)\right)\in\mathcal{A}\mathcal{M}_h(\Omega_D)$;
\item $\Delta_{m+1,h+1}^{\gamma_m}\mathcal{S}^{\llbracket h\rrbracket}_K(f)\in\mathcal{A}\mathcal{M}_{h+1}(\Omega_D)$.
    \end{enumerate}
\end{prop}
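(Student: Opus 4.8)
The plan is to dispose of claim (2) first, since it is an immediate consequence of results already at hand, and then to reduce claim (1) to the several-variables Fueter--Sce theorem as well.

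For claim (2): since $f\in\mathcal{SR}(\Omega_D)$ and $\llbracket h\rrbracket\neq\{1,\dots,n\}$ (as $h\le n-1$), Theorem \ref{Teorema principale} (4), applied with $H=\llbracket h\rrbracket$ and $p=\min H^c=h+1$, tells me that $\mathcal{S}^{\llbracket h\rrbracket}_K(f)\in\mathcal{SR}_{h+1}(\Omega_{D_{\llbracket h\rrbracket}})$ for every $K\in\mathcal{P}(h)$. I then simply invoke the several-variables Fueter--Sce theorem (Theorem \ref{thm fueter sce several variables}) with the index $h+1$ in place of $h$: this gives $\Delta_{m+1,h+1}^{\gamma_m}\mathcal{S}^{\llbracket h\rrbracket}_K(f)\in\mathcal{AM}_{h+1}(\Omega_D)$, which is exactly claim (2). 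No extra work is needed here.

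For claim (1), the idea is to write the component as a single partial spherical derivative of a function that is slice regular in $x_h$. I would set $u:=x_h^{\chi_K(h)}\odot\mathcal{S}^{\llbracket h-1\rrbracket}_{K\setminus\{h\}}(f)$, so that $\mathcal{S}^{\llbracket h\rrbracket}_K(f)=u'_{s,h}$ by the very definition of the ordered components. First I would check that $u\in\mathcal{SR}_h$: by Theorem \ref{Teorema principale} (2) and (4) the factor $\mathcal{S}^{\llbracket h-1\rrbracket}_{K\setminus\{h\}}(f)$ is slice and slice regular in $x_h$, and multiplying by $x_h^{\chi_K(h)}$ preserves $\ker(\partial/\partial x_h^c)$ by the Leibniz rule (exactly as in the proof of Theorem \ref{Teorema principale} (3)), while sliceness in $x_h$ is preserved by the slice product; hence $u\in\mathcal{SR}_h$. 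In particular $g:=u'_{s,h}=\mathcal{S}^{\llbracket h\rrbracket}_K(f)$ lies in $\mathcal{S}_{c,h}$ by Theorem \ref{Teorema principale} (2).

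Now I would combine three facts. By Lemma \ref{lemma derivata sferica laplaciano e dirac} (2), $\partial_{x_h}g=\partial_{x_h}(u'_{s,h})=\tfrac{1}{2(1-m)}\Delta_{m+1,h}u$. Since $g\in\mathcal{S}_{c,h}$ and each iterated Laplacian $\Delta_{m+1,h}^{\,i}g$ stays circular in $x_h$ (the operator $\Delta_{m+1,h}$ acts only in the $x_h$-variables and preserves axial symmetry there), the commutation identity \eqref{eq. h-Laplaciano e derivata slice commutano su funzioni circolari} can be applied $\gamma_m-1$ times to move $\partial_{x_h}$ across the Laplacian powers:
\begin{equation*}
\partial_{x_h}\!\left(\Delta_{m+1,h}^{\gamma_m-1}g\right)=\Delta_{m+1,h}^{\gamma_m-1}\big(\partial_{x_h}g\big)=\tfrac{1}{2(1-m)}\Delta_{m+1,h}^{\gamma_m-1}\Delta_{m+1,h}u=\tfrac{1}{2(1-m)}\Delta_{m+1,h}^{\gamma_m}u.
\end{equation*}
Recalling $\tfrac{m-3}{2}=\gamma_m-1$, the left-hand side is precisely the object in claim (1); and since $u\in\mathcal{SR}_h$, Fueter--Sce (Theorem \ref{thm fueter sce several variables}) gives $\Delta_{m+1,h}^{\gamma_m}u\in\mathcal{AM}_h(\Omega_D)$. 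As $\mathcal{AM}_h$ is a real vector space, the scalar multiple belongs to it as well, which proves claim (1).

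I expect the only genuinely delicate point to be the bookkeeping around the commutation step: one must ensure that circularity in $x_h$ is preserved under each power of $\Delta_{m+1,h}$, so that the hypothesis of \eqref{eq. h-Laplaciano e derivata slice commutano su funzioni circolari} holds at every intermediate stage, and one must keep track of the shrinking domains $\Omega_{D_{\llbracket h\rrbracket}}$ on which the partial spherical derivatives live. The verification that $u\in\mathcal{SR}_h$ (sliceness and regularity in $x_h$ of the product with $x_h^{\chi_K(h)}$) is the other point needing care, but it is routine given Theorem \ref{Teorema principale} and the Leibniz rule already employed there.
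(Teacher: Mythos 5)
Your proof is correct in substance, but it takes a genuinely different route from the paper's, and two of your justifications need repair.

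On claim (2) you and the paper do essentially the same thing at different levels of packaging: you quote Theorem \ref{Teorema principale} (4) to get $\mathcal{S}^{\llbracket h\rrbracket}_K(f)\in\mathcal{SR}_{h+1}$ and then invoke Theorem \ref{thm fueter sce several variables} with index $h+1$ (legitimate: that theorem is proved before this proposition, so there is no circularity), whereas the paper unrolls the same computation inline, commuting $\overline{\partial}_{x_{h+1}}$ with $\Delta^{\gamma_m}_{m+1,h+1}$, using Lemma \ref{lemma derivata sferica laplaciano e dirac} (1) to replace $\overline{\partial}_{x_{h+1}}\mathcal{S}^{\llbracket h\rrbracket}_K(f)$ by $\frac{1-m}{2}\mathcal{S}^{\llbracket h+1\rrbracket}_K(f)$, and concluding by polyharmonicity. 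On claim (1) the arguments really differ. The paper's proof is one line: by the factorization \eqref{equazione fattorizzazione laplaciano} in the variable $x_h$, $\overline{\partial}_{x_h}\partial_{x_h}=\frac{1}{4}\Delta_{m+1,h}$, so $\overline{\partial}_{x_h}\bigl(\partial_{x_h}\Delta^{\frac{m-3}{2}}_{m+1,h}\mathcal{S}^{\llbracket h\rrbracket}_K(f)\bigr)=\frac{1}{4}\Delta^{\gamma_m}_{m+1,h}\mathcal{S}^{\llbracket h\rrbracket}_K(f)=0$ by Theorem \ref{Teorema principale} (3). You instead lift the component to its parent $u=x_h^{\chi_K(h)}\odot\mathcal{S}^{\llbracket h-1\rrbracket}_{K\setminus\{h\}}(f)\in\mathcal{SR}_h$, write $\partial_{x_h}\mathcal{S}^{\llbracket h\rrbracket}_K(f)=\frac{1}{2(1-m)}\Delta_{m+1,h}u$ via Lemma \ref{lemma derivata sferica laplaciano e dirac} (2), commute, and apply Fueter--Sce to $u$. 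Your route is longer, but it buys something the paper's one-liner leaves implicit: Theorem \ref{thm fueter sce several variables} delivers membership in $\mathcal{AM}_h$, i.e.\ sliceness with respect to $x_h$ as well as the vanishing of $\overline{\partial}_{x_h}$, whereas the paper only exhibits the latter.

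Two of your justifications should be fixed. First, ``sliceness in $x_h$ is preserved by the slice product'' is false as a general principle: $\mathcal{S}_h(\Omega_D)$ is not closed under $\odot$ for $h\ge 2$, since two stem functions carrying $e_{\{1\}}$- and $e_{\{h\}}$-components (both allowed in $\mathcal{S}_h$) produce an $e_{\{1,h\}}$-component in the product, which is forbidden in $\mathcal{S}_h$. What saves your claim $u\in\mathcal{S}_h$ is circularity, which you cite but do not use: by Theorem \ref{Teorema principale} (2), $\mathcal{S}^{\llbracket h-1\rrbracket}_{K\setminus\{h\}}(f)\in\mathcal{S}_{c,\llbracket h-1\rrbracket}$, the monomial $x_h$ also lies in $\mathcal{S}_{c,\llbracket h-1\rrbracket}$, and $\mathcal{S}_{c,\llbracket h-1\rrbracket}$ is a subalgebra of $(\mathcal{S}(\Omega_D),\odot)$ whose elements have stem components indexed only in $\{h,\dots,n\}$, hence it is contained in $\mathcal{S}_h$. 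Second, the commutation identity \eqref{eq. h-Laplaciano e derivata slice commutano su funzioni circolari} you invoke concerns the slice derivative $\partial/\partial x_h$, which has variable coefficients; the operator $\partial_{x_h}$ in claim (1) --- as the paper's own use of the factorization makes clear --- is the Cauchy--Riemann--Fueter operator, a constant-coefficient operator that commutes with $\Delta_{m+1,h}$ trivially, so no circularity bookkeeping is needed at all. (Since the two operators coincide on functions circular in $x_h$, your version is repairable, but the lemma is the wrong tool, and the ``delicate point'' you anticipate is not actually there.)
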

\begin{proof}
    \begin{enumerate}[wide, , labelindent=0pt]
\item It holds
\begin{equation*}
    \overline{\partial}_{x_h}\partial_{x_h}\Delta_{m+1,h}^\frac{m-3}{2}\mathcal{S}^{\llbracket h\rrbracket}_K(f)=\frac{1}{4}\Delta_{m+1,h}^{\gamma_m}\mathcal{S}^{\llbracket h\rrbracket}_K(f)=0,
\end{equation*}
by Theorem \ref{Teorema principale}.
\item Similarly,
\begin{equation*}
    \begin{split}
\overline{\partial}_{x_{h+1}}\Delta_{m+1,h+1}^{\gamma_m}\mathcal{S}^{\llbracket h\rrbracket}_K(f)&=\Delta_{m+1,h+1}^{\gamma_m}\overline{\partial}_{x_{h+1}}\mathcal{S}^{\llbracket h\rrbracket}_K(f)=\frac{1-m}{2}\Delta_{m+1,h+1}^{\gamma_m}\overline{\partial}_{x_{h+1}}\left(\mathcal{S}^{\llbracket h\rrbracket}_K(f)\right)'_{s,h+1}\\
&=\frac{1-m}{2}\Delta_{m+1,h+1}^{\gamma_m}\overline{\partial}_{x_{h+1}}\mathcal{S}^{\llbracket h+1\rrbracket}_K(f)=0,
    \end{split}
\end{equation*}
again by Theorem \ref{Teorema principale}, Lemma \ref{lemma derivata sferica laplaciano e dirac} and Proposition \ref{Prop:iteratedLaplaciansevvar}.
    \end{enumerate}
\end{proof}

\begin{lem}
    Let $f\in\mathcal{S}^1(\Omega_D)\cap\ker(\partial/\partial x_h^c)$, then it holds
    \begin{equation*}
\Delta_{m+1,h}f=2(1-m)\sum_{K\in\mathcal{P}(h-1)}(-1)^{|K^c|}\left(\overline{x}\right)_{K^c}\partial_{x_h}\left(\mathcal{S}^{\llbracket h\rrbracket}_K(f)\right),
    \end{equation*}
    with $K^c=\{1,\dots, h-1\}\setminus K$.
\end{lem}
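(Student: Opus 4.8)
The plan is to reduce the statement to the first–order Laplacian formula \eqref{eq:Leplaciano 1 sev var} combined with the ordered Almansi decomposition of $f$. Since $f\in\ker(\partial/\partial x_h^c)$, the case $k=0$ of Theorem \ref{thm:iteratedlaplaciansevvar} gives $\Delta_{m+1,h}f=2(1-m)\,\partial_{x_h}f'_{s,h}$, so the whole problem becomes that of expanding $\partial_{x_h}f'_{s,h}$ in terms of the components $\mathcal{S}^{\llbracket h\rrbracket}_K(f)$ with $K\in\mathcal{P}(h-1)$.

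First I would apply the ordered Almansi decomposition \eqref{Formula decomposizione di Almansi ordinata} of Theorem \ref{Teorema principale} to $f$ with respect to $H=\llbracket h-1\rrbracket$, writing $f=\sum_{K\in\mathcal{P}(h-1)}(-1)^{|K^c|}(\overline{x})_{K^c}\odot\mathcal{S}^{\llbracket h-1\rrbracket}_K(f)$ with $K^c=\{1,\dots,h-1\}\setminus K$, which holds for every slice function. Each coefficient $(\overline{x})_{K^c}$ depends only on the variables $x_1,\dots,x_{h-1}$, hence lies in $\mathcal{S}_{c,h}(\Omega_D)$; thus applying the operation $g\mapsto g'_{s,h}$ termwise and using the Leibniz rule of Proposition \ref{proposizione proprieta derivata sferica}~(8) together with part~(3) (which give $((\overline{x})_{K^c})'_{s,h}=0$ and $((\overline{x})_{K^c})^\circ_{s,h}=(\overline{x})_{K^c}$) shows that $(\cdot)'_{s,h}$ passes through the coefficients. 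Since $h\notin K$, the very definition of $\mathcal{S}^{\llbracket h\rrbracket}_K(f)$ yields $(\mathcal{S}^{\llbracket h-1\rrbracket}_K(f))'_{s,h}=\mathcal{S}^{\llbracket h\rrbracket}_K(f)$, so that $f'_{s,h}=\sum_{K\in\mathcal{P}(h-1)}(-1)^{|K^c|}(\overline{x})_{K^c}\odot\mathcal{S}^{\llbracket h\rrbracket}_K(f)$.

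Next I would differentiate with $\partial_{x_h}$. As $(\overline{x})_{K^c}$ is independent of $x_h$ we have $\partial_{x_h}(\overline{x})_{K^c}=0$, and the Leibniz rule for the slice derivative (\cite[Proposition 3.25]{Several}) lets $\partial_{x_h}$ pass through the coefficients, giving $\partial_{x_h}f'_{s,h}=\sum_{K\in\mathcal{P}(h-1)}(-1)^{|K^c|}(\overline{x})_{K^c}\odot\partial_{x_h}\mathcal{S}^{\llbracket h\rrbracket}_K(f)$. Substituting into $\Delta_{m+1,h}f=2(1-m)\,\partial_{x_h}f'_{s,h}$ gives the asserted formula, up to rewriting $\odot$ as an ordinary product; this last replacement is licit because the coefficients $(\overline{x})_{K^c}$ are slice preserving, so the two products agree (cf. the remark after Definition \ref{Definizione S} and \cite[Proposition 2.52]{Several}).

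The step requiring the most care is precisely the passage of $(\cdot)'_{s,h}$ and $\partial_{x_h}$ through the coefficients $(\overline{x})_{K^c}$: this must be carried out with the slice product $\odot$, for which the Leibniz rules are available, and \emph{not} with the pointwise product. Indeed $(\overline{x})_{K^c}$ need not commute with the imaginary unit $J_h$, and $(\overline{x})_{K^c}\odot\mathcal{S}^{\llbracket h-1\rrbracket}_K(f)$ need not be slice with respect to $x_h$, so the explicit representation \eqref{equazione derivata sferica parziale coincide con unidimensionale} of the spherical $x_h$-derivative cannot be applied directly to the individual summands; the slice-preserving character of the coefficients is what finally reconciles the computation with the pointwise product appearing in the statement.
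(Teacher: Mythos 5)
Your proof is correct, and it reorders the paper's argument in a way that shifts which results carry the weight. The paper decomposes $f$ first, via \eqref{Formula decomposizione di Almansi ordinata} with $H=\llbracket h-1\rrbracket$, and then applies $\Delta_{m+1,h}$ termwise: since $\Delta_{m+1,h}$ is a real differential operator in the coordinates of $x_h$ and each coefficient $\left(\overline{x}\right)_{K^c}$ is independent of $x_h$, the coefficients pass through with no Leibniz rule for $\odot$ at all; the termwise identity $\Delta_{m+1,h}\mathcal{S}^{\llbracket h-1\rrbracket}_K(f)=4\partial_{x_h}\overline{\partial}_{x_h}\mathcal{S}^{\llbracket h-1\rrbracket}_K(f)=2(1-m)\partial_{x_h}\mathcal{S}^{\llbracket h\rrbracket}_K(f)$ is then obtained from the factorization \eqref{equazione fattorizzazione laplaciano} and Lemma \ref{lemma derivata sferica laplaciano e dirac} (1), which applies because $\mathcal{S}^{\llbracket h-1\rrbracket}_K(f)\in\mathcal{S}\mathcal{R}_h(\Omega_D)$ by Proposition \ref{proposizione proprieta derivata sferica} (1) and (6). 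You instead invoke \eqref{eq:Leplaciano 1 sev var} globally on $f$ — legitimate, since that identity needs only $f\in\mathcal{S}(\Omega_D)\cap\ker(\partial/\partial x_h^c)$ and not $f\in\mathcal{S}_h(\Omega_D)$ — and then push $(\cdot)'_{s,h}$ and $\partial/\partial x_h$ through the coefficients with the $\odot$-Leibniz rules. Both routes rest on the same decomposition and the same relation between $\Delta_{m+1,h}$, $\partial_{x_h}$ and the spherical derivative; the paper's order makes the commutation step trivial, while yours keeps the computation entirely on the slice side at the cost of extra $\odot$-bookkeeping. Your observation that the summands $\left(\overline{x}\right)_{K^c}\odot\mathcal{S}^{\llbracket h-1\rrbracket}_K(f)$ need not lie in $\mathcal{S}_h(\Omega_D)$, so that \eqref{equazione derivata sferica parziale coincide con unidimensionale} cannot be applied termwise, is a genuine subtlety and you handle it correctly.

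The one imprecision is your justification for replacing $\odot$ by the pointwise product at the end: slice preservation of the left factor is \emph{not} by itself sufficient in several variables. For instance, $x_2$ is slice preserving, yet $x_2\odot g\neq x_2\,g$ whenever the stem function of $g$ has a nonzero $e_{\{1\}}$-component, since $J_1J_2\neq J_2J_1$ in general. What actually makes the replacement licit here is the ordered structure encoded in \cite[Proposition 2.52]{Several} (the remark after Definition \ref{Definizione S}): the coefficients $\left(\overline{x}\right)_{K^c}$ involve only variables of index $<h$ and have real-valued stem components, while $\partial_{x_h}\mathcal{S}^{\llbracket h\rrbracket}_K(f)$ is circular with respect to those variables — a property preserved by $\partial/\partial x_h$, whose stem operator only toggles the index $h$ — so every ordered product $[J_{Q\cup P}]$ arising in the slice product splits as $[J_Q][J_P]$ and the two products coincide. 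Since you cite the correct result, this is a flaw in the explanation rather than in the proof itself.
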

    \begin{proof}
By Theorem \ref{Teorema principale}, we can decompose $f$ as
\begin{equation*}
    f=\sum_{K\in\mathcal{P}(h-1)}(-1)^{|K^c|}\left(\overline{x}\right)_{K^c}\mathcal{S}^{\llbracket h-1\rrbracket}_K(f),
\end{equation*}
then
\begin{equation*}
    \begin{split}
\Delta_{m+1,h}f&=\Delta_{m+1,h}\left(\sum_{K\in\mathcal{P}(h-1)}(-1)^{|K^c|}\left(\overline{x}\right)_{K^c}\mathcal{S}^{\llbracket h-1\rrbracket}_K(f)\right)\\
&=\sum_{K\in\mathcal{P}(h-1)}(-1)^{|K^c|}\left(\overline{x}\right)_{K^c}\Delta_{m+1,h}\left(\mathcal{S}^{\llbracket h-1\rrbracket}_K(f)\right).
    \end{split}
\end{equation*}
Now, by Lemma \ref{lemma derivata sferica laplaciano e dirac} (1) we have
\begin{equation*}
    \Delta_{m+1,h}\left(\mathcal{S}^{\llbracket h-1\rrbracket}_K(f)\right)=4\partial_{x_h}\overline{\partial}_{x_h}\left(\mathcal{S}^{\llbracket h-1\rrbracket}_K(f)\right)=2(1-m)\partial_{x_h}\left(\mathcal{S}^{\llbracket h\rrbracket}_K(f)\right)
\end{equation*}
and so
\begin{equation*}
    \Delta_{m+1,h}f=2(1-m)\sum_{K\in\mathcal{P}(h-1)}(-1)^{|K^c|}\left(\overline{x}\right)_{K^c}\partial_{x_h}\left(\mathcal{S}^{\llbracket h\rrbracket}_K(f)\right).
\end{equation*}
    \end{proof}

As promised, we can give a new proof of Theorem \ref{thm fueter sce several variables}, through Slice-Almansi decomposition.
    \begin{proof}[Proof of Theorem \ref{thm fueter sce several variables}]
    By Lemma \ref{lemma derivata sferica laplaciano e dirac} (2) it holds
    \begin{equation*}
\Delta_{m+1,h}f=2(1-m)\sum_{K\in\mathcal{P}(h-1)}(-1)^{|K^c|}\left(\overline{x}\right)_{K^c}\partial_{x_h}\left(\mathcal{S}^{\llbracket h\rrbracket}_K(f)\right)
    \end{equation*}
and recall that, since $f\in\mathcal{S}\mathcal{R}_h(\Omega_D)$, $\mathcal{S}^{\llbracket h\rrbracket}_K(f)=0$, for every $K\in\mathcal{P}(h-1)\setminus\llbracket h-1\rrbracket$ (Propositions \ref{proposizione proprieta derivata sferica} (1) and \ref{proposizione proprieta derivata sferica} (6)). Thus, the previous equation reduces to
\begin{equation*}
    \Delta_{m+1,h}f=2(1-m)\partial_{x_h}\left(\mathcal{S}^{\llbracket h\rrbracket}_{\llbracket h-1\rrbracket}(f)\right),
\end{equation*}
so, again by Lemma \ref{lemma derivata sferica laplaciano e dirac} (2)
\begin{equation*}
    \Delta_{m+1,h}^{\gamma_m}f=\Delta_{m+1,h}^\frac{m-3}{2}\Delta_{m+1,h}^{\gamma_m}f=2(1-m)\partial_{x_h}\left(\Delta_{m+1,h}^\frac{m-3}{2}\mathcal{S}^{\llbracket h\rrbracket}_{\llbracket h-1\rrbracket}(f)\right)
\end{equation*}
and we conclude with Proposition \ref{Prop:iteratedLaplaciansevvar}.
    \end{proof}

    Finally, we extend \cite[Corollary 2]{AlmansiPerottiClifford} to several variables.
\begin{cor}
\label{cor further decomposition several variables}
    With the notation of Corollary \ref{cor Almansi clifford several}, let 
    \begin{equation*}
f(x)=\sum_{K\subset H}\sum_{\substack{T\in\llbracket0,\frac{m-3}{2}\rrbracket^{|G|},\\T=(t_1,\dots,t_{|G|})}}(-1)^{|H\setminus K|}|x_G|^{2T}\left(\overline{x}\right)_{H\setminus K}\odot\mathcal{E}^{H,G}_{K,T}(f)(x)
    \end{equation*}
    be the decomposition \eqref{eq further Alamsi Rm}, with harmonic components $\mathcal{E}^{H,G}_{K,T}(f)$. For any $T\in\llbracket0,\frac{m-3}{2}\rrbracket^{|G|}$, define 
    \begin{equation*}
\mathcal{G}^{H,G}_T(f)=\sum_{K\subset H}(-1)^{|H\setminus K|}\left(\overline{x}\right)_{H\setminus K} \odot\mathcal{E}^{H,G}_{K,T}(f)\in\mathcal{S}(\Omega_D).
    \end{equation*}
    Then we can write
    \begin{equation}
    \label{eq ricombined almansi decomposition several clifford}
f(x)=\sum_{\substack{T\in\llbracket0,\frac{m-3}{2}\rrbracket^{|G|},\\T=(t_1,\dots,t_{|G|})}}|x_G|^{2T}\mathcal{G}^{H,G}_T(f)(x),
    \end{equation}
    where $\mathcal{G}^{H,G}_T(f)$ are biharmonic, namely
$\Delta_{m+1,g}^2\mathcal{G}^{H,G}_T(f)=0$, for every $g\in G$.
\end{cor}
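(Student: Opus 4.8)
The plan is to deduce the recombined decomposition \eqref{eq ricombined almansi decomposition several clifford} directly from Corollary \ref{cor Almansi clifford several}, and then to establish biharmonicity one variable at a time, reducing each step to a one-variable computation of the type in \cite[Corollary 2]{AlmansiPerottiClifford}. For the decomposition I would start from \eqref{eq further Alamsi Rm} and observe that each weight $|x_G|^{2T}=|x_{g_1}|^{2t_1}\cdots|x_{g_s}|^{2t_s}$ is a real-valued (hence central, slice-preserving) function, so it commutes with the slice product $\odot$ and may be pulled out of the inner sum over $K$. Interchanging the two summations and grouping the terms sharing the same multi-index $T$ gives
\begin{equation*}
    f(x)=\sum_{T}|x_G|^{2T}\sum_{K\subset H}(-1)^{|H\setminus K|}\left(\overline{x}\right)_{H\setminus K}\odot\mathcal{E}^{H,G}_{K,T}(f)(x)=\sum_{T}|x_G|^{2T}\,\mathcal{G}^{H,G}_T(f)(x),
\end{equation*}
which is exactly \eqref{eq ricombined almansi decomposition several clifford}; that each $\mathcal{G}^{H,G}_T(f)$ belongs to $\mathcal{S}(\Omega_D)$ is clear, being a finite $\odot$-combination of slice functions.

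The substantive part is biharmonicity. Fix $g\in G$. By Corollary \ref{cor Almansi clifford several} every component satisfies $\mathcal{E}^{H,G}_{K,T}(f)\in\ker\Delta_{m+1,g}$, i.e. it is harmonic in $x_g$; moreover, being a classical Almansi component in the single variable $x_g$ of $\mathcal{S}^H_K(f)\in\mathcal{S}_{c,H}(\Omega_{D_H})$, which is circular in $x_g$, it is itself circular in $x_g$ (by the corollary following Theorem \ref{Classical Almansi Theorem}). I would then argue termwise on $\mathcal{G}^{H,G}_T(f)=\sum_{K\subset H}(-1)^{|H\setminus K|}(\overline{x})_{H\setminus K}\odot\mathcal{E}^{H,G}_{K,T}(f)$. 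Fixing all variables but $x_g$ and invoking Remark \ref{remark structure slice function wrt xh} (restriction to $x_g$ turns a several-variable slice function into a one-variable one and is compatible with $\odot$), each term restricts to a one-variable slice function of $x_g$. For $g\in K$ the factor $(\overline{x})_{H\setminus K}$ is $x_g$-independent and the term is harmonic in $x_g$, a fortiori biharmonic. For $g\notin K$ the factor $(\overline{x})_{H\setminus K}$ contains $\overline{x}_g$; since the remaining factors and $\mathcal{E}^{H,G}_{K,T}(f)$ are circular in $x_g$, the relevant slice products collapse to pointwise products, so the $x_g$-dependence of the term takes the form $a\,\overline{x}_g\,h\,b$ with $a,b$ constant in $x_g$ and $h$ harmonic in $x_g$. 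Because $\Delta_{m+1,g}$ is a real scalar operator it commutes with the constant Clifford factors $a,b$, and the Leibniz rule gives $\Delta_{m+1,g}(\overline{x}_g h)=2\sum_{i=0}^m(\partial_{x_{g,i}}\overline{x}_g)(\partial_{x_{g,i}}h)$, a constant-coefficient combination of derivatives of the harmonic function $h$, hence again harmonic in $x_g$; applying $\Delta_{m+1,g}$ once more annihilates it. Thus every term, and so $\mathcal{G}^{H,G}_T(f)$, is biharmonic in $x_g$, and since $g\in G$ was arbitrary we conclude $\Delta_{m+1,g}^2\mathcal{G}^{H,G}_T(f)=0$ for all $g\in G$.

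I expect the main obstacle to be the bookkeeping that lets one pass from the several-variable slice product to the pointwise product in the single variable $x_g$: one must justify, using the circularity of the Almansi components in $x_g$ together with the compatibility of $\odot$ with the one-variable restriction, that the only $x_g$-dependent non-circular factor is the linear term $\overline{x}_g$, and that the constant Clifford multiplications commute with the scalar operator $\Delta_{m+1,g}$. Once this reduction is in place, biharmonicity is just the classical fact that the product of a linear (harmonic) function with a harmonic function is biharmonic, exactly as in the one-variable case treated in \cite{AlmansiPerottiClifford}.
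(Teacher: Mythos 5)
Your proof is correct and its skeleton is the same as the paper's: regroup the double sum in \eqref{eq further Alamsi Rm} to obtain \eqref{eq ricombined almansi decomposition several clifford}, then split $\mathcal{G}^{H,G}_T(f)$ according to whether $g\in K$ (these terms die under a single $\Delta_{m+1,g}$, the prefactor being $x_g$-independent and $\mathcal{E}^{H,G}_{K,T}(f)$ harmonic in $x_g$) or $g\notin K$ (one application of $\Delta_{m+1,g}$ leaves first-order derivatives of $x_g$-harmonic functions, which the second application kills). Where you genuinely differ is the implementation of the middle step. The paper stays inside the slice formalism: it uses the identity $\Delta_{m+1,g}\bigl(\overline{x}_g\odot\mathcal{E}^{H,G}_{K\cup\{g\},T}(f)\bigr)=\partial_{x_g}\mathcal{E}^{H,G}_{K\cup\{g\},T}(f)$ (stated there without proof and, as written, valid only up to a constant factor, which is immaterial since everything vanishes) and then commutes $\Delta_{m+1,g}$ past the slice derivative $\partial/\partial x_g$. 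You instead de-slice: using circularity of the Almansi components in $x_g$, you reduce each term to pointwise Clifford products of the form $a\,\overline{x}_g\,h\,b$ with $a,b$ constant in $x_g$ and $h$ harmonic in $x_g$, and finish with ordinary Leibniz calculus. This reduction is sound, and your sandwiched form is exactly the right one: expanding $(\overline{x})_{H\setminus K}\odot\mathcal{E}^{H,G}_{K,T}(f)$ at the stem-function level, the unit $J_g$ lands at its sorted position inside the ordered product $[J_{M\cup\{g\}\cup L},\,\cdot\,]$, so $\overline{x}_g$ really is flanked by constant Clifford factors rather than sitting on the far left; the flexibility of $a\,\overline{x}_g\,h\,b$ absorbs precisely this, and the remaining facts (partial derivatives of harmonic functions are harmonic, the scalar operator $\Delta_{m+1,g}$ commutes with constant Clifford multiplications) are elementary. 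The trade-off between the two routes: yours is more self-contained, resting only on classical calculus once the bookkeeping is done, while the paper's is shorter inside its formalism but leans on slice-calculus identities whose verification ultimately amounts to the same pointwise computation you carry out.
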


\begin{proof}
    Decomposition \eqref{eq ricombined almansi decomposition several clifford} follows by \eqref{eq further Alamsi Rm} and the definition of $\mathcal{G}^{H,G}_T$, now let us prove that the components are biharmonic in every variable $x_g$, with $g\in G$. Indeed, it holds
    \begin{equation*}
\begin{split}
    \Delta_{m+1,g}\mathcal{G}^{H,G}_T&=\sum_{K\subset H,g\notin K}(-1)^{|H\setminus K|}\left(\overline{x}\right)_{H\setminus K} \odot\Delta_{m+1,g}\left(\mathcal{E}^{H,G}_{K,T}(f)\right)+\\
    &+\sum_{K\subset H,g\notin K}(-1)^{|H\setminus (K\cup\{g\})|}\left(\overline{x}\right)_{H\setminus K} \odot\Delta_{m+1,g}\left(\overline{x}_g\odot\mathcal{E}^{H,G}_{K\cup\{g\},T}(f)\right)\\
    &=\sum_{K\subset H,g\notin K}(-1)^{|H\setminus (K\cup\{g\})|}\left(\overline{x}\right)_{H\setminus K} \odot\partial_{x_g}\mathcal{E}^{H,G}_{K\cup\{g\},T}(f),
\end{split}
    \end{equation*}
    where we have used that $\Delta_{m+1,g}\mathcal{E}^{H,G}_{K,T}=0$, for every $g\in G$ and that $\Delta_{m+1,g}\left(\overline{x}_g\odot\mathcal{E}^{H,G}_{K\cup\{g\},T}(f)\right)=\partial_{x_g}\mathcal{E}^{H,G}_{K\cup\{g\},T}(f)$. Finally
    \begin{equation*}
\begin{split}
    \Delta_{m+1,g}^2\mathcal{G}^{H,G}_T&=\Delta_{m+1,g}\left(\sum_{K\subset H,g\notin K}(-1)^{|H\setminus (K\cup\{g\})|}\left(\overline{x}\right)_{H\setminus K} \odot\partial_{x_g}\mathcal{E}^{H,G}_{K\cup\{g\},T}(f)\right)\\
    &=\sum_{K\subset H,g\notin K}(-1)^{|H\setminus (K\cup\{g\})|}\left(\overline{x}\right)_{H\setminus K} \odot\partial_{x_g}\Delta_{m+1,g}\left(\mathcal{E}^{H,G}_{K\cup\{g\},T}(f)\right)=0.
\end{split}
    \end{equation*}
\end{proof}

\begin{oss}
    We remark that decomposition \eqref{eq ricombined almansi decomposition several clifford} is formally equivalent to the Classical Almansi decomposition. By the uniqueness of Almansi decomposition, the functions $\mathcal{G}^{H,G}_T$ in general belong to the kernel of $\ker\Delta^2_{m+1,g}\setminus\ker\Delta_{m+1,g}$, for any $g\in G$. 
\end{oss}
\begin{oss}
    Note that in the one variable case (\cite[Corollary 2]{AlmansiPerottiClifford}) the components were more than biharmonic functions, namely they were in the kernel of the third-order differential operator $\overline{\partial}\Delta$. In several variables, the same happens for $x_1$, but in general it doesn't hold for the other variables.
\end{oss}

\begin{ex}
Let us resume Example \ref{ex 2 variables}, where we decomposed the function $f:(\mathbb{R}^6)^2\to\mathbb{R}_5$, $f(x_1,x_2)=x_1^4x_2^7$ as 
\begin{equation*}
    \begin{split}
f&=\mathcal{E}^{\{1,2\}}_{\{1,2\},0}(f)+|x_2|^2\mathcal{E}^{\{1,2\}}_{\{1,2\},1}(f)-\overline{x}_1\mathcal{E}^{\{1,2\}}_{\{2\},0}(f)-\overline{x}_1|x_2|^2\mathcal{E}^{\{1,2\}}_{\{2\},1}(f)+\\
&-\overline{x}_2\mathcal{E}^{\{1,2\}}_{\{1\},0}(f)-\overline{x}_2|x_2|^2\mathcal{E}^{\{1,2\}}_{\{1\},1}(f)+\overline{x}_1\overline{x}_2\mathcal{E}^{\{1,2\}}_{\emptyset,0}(f)+\overline{x}_1\overline{x}_2|x_2|^2\mathcal{E}^{\{1,2\}}_{\emptyset,1}(f),
    \end{split}
\end{equation*}
Following Corollary \ref{cor further decomposition several variables}, for every $T=0,1$, define
\begin{equation*}
    \mathcal{G}_T=\sum_{K\in\mathcal{P}(2)}(-1)^{|\{1,2\}\setminus K|}\left(\overline{x}\right)_{\{1,2\}\setminus K}\mathcal{E}^{\{1,2\}}_{K,T}(f).
\end{equation*}
Explicitly, we have
\begin{equation*}
    \begin{split}
\mathcal{G}_0&=\mathcal{E}^{\{1,2\}}_{\{1,2\},0}(f)-\overline{x}_1\mathcal{E}^{\{1,2\}}_{\{2\},0}(f)-\overline{x}_2\mathcal{E}^{\{1,2\}}_{\{1\},0}(f)+\overline{x}_1\overline{x}_2\mathcal{E}^{\{1,2\}}_{\emptyset,0}(f)\\
&=\left[(x_1^5)'_{s,1}-\overline{x}_1(x_1^4)'_{s,1}\right](15\alpha_2^7-63\alpha_2^5\beta_2^2+45\alpha_2^3\beta_2^4-5\alpha_2\beta_2^6)+\\
&-\left[(x_1^5)'_{s,1}-\overline{x}_1(x_1^4)'_{s,1}\right]\overline{x}_2\left(12\alpha_2^6-36\alpha_2^4\beta_2^2+\frac{108}{7}\alpha_2^2\beta_2^4-\frac{4}{7}\beta_2^6\right)\\
&=x_1^4\left[15\alpha_2^7-63\alpha_2^5\beta_2^2+45\alpha_2^3\beta_2^4-5\alpha_2\beta_2^6-\overline{x}_2\left(12\alpha_2^6-36\alpha_2^4\beta_2^2+\frac{108}{7}\alpha_2^2\beta_2^4-\frac{4}{7}\beta_2^6\right)\right],
    \end{split}
\end{equation*}
\begin{equation*}
    \begin{split}
    \mathcal{G}_1&=\mathcal{E}^{\{1,2\}}_{\{1,2\},1}(f)-\overline{x}_1\mathcal{E}^{\{1,2\}}_{\{2\},1}(f)-\overline{x}_2\mathcal{E}^{\{1,2\}}_{\{1\},1}(f)+\overline{x}_1\overline{x}_2\mathcal{E}^{\{1,2\}}_{\emptyset,1}(f)\\
    &=\left[(x_1^5)'_{s,1}-\overline{x}_1(x_1^4)'_{s,1}\right](-7\alpha_2^5+14\alpha_2^3\beta_2^2-3\alpha_2\beta_2^4)+\\
&-\left[(x_1^5)'_{s,1}-\overline{x}_1(x_1^4)'_{s,1}\right]\overline{x}_2\left(-5\alpha_2^4+6\alpha_2^2\beta_2^2-\frac{3}{7}\beta_2^4\right)\\
&=x_1^4\left[-7\alpha_2^5+14\alpha_2^3\beta_2^2-3\alpha_2\beta_2^4-\overline{x}_2\left(-5\alpha_2^4+6\alpha_2^2\beta_2^2-\frac{3}{7}\beta_2^4\right)\right].
    \end{split}
\end{equation*}

With these functions, we decompose
\begin{equation*}
    f(x_1,x_2)=\mathcal{G}_0(x_1,x_2)+|x_2|^2\mathcal{G}_{1}(x_1,x_2),
\end{equation*}
which corresponds to decomposition \eqref{eq ricombined almansi decomposition several clifford}.
Note that $\mathcal{G}_{0},\mathcal{G}_{1}\in\ker\Delta_{6,j}^2$, for $j=1,2$.

\end{ex}

\textbf{Conflicts of interest} The author declare no Conflict of interest.

\printbibliography
\end{document}